\documentclass{amsart}

\usepackage{amsmath,amsfonts,amssymb,mathabx,color,extdash,mathtools} 
\newcommand{\e}{\varepsilon}
\newcommand{\iy}{\infty}
\newcommand{\st}{\ : \ }

\DeclareMathOperator{\rank}{rank}

\usepackage{amsmath}
\usepackage{amssymb}  
\usepackage{amsthm}
\usepackage{amsfonts}
\usepackage{graphicx}
\usepackage{cancel}
\usepackage{bbm}
\usepackage{url}
\usepackage{enumerate} 
\usepackage{ upgreek }
\usepackage{dsfont}
\usepackage{color}
\usepackage{subcaption}

\usepackage{makecell}
\usepackage{diagbox}
\usepackage{wrapfig}
\usepackage{rotating}
\usepackage{tabularx}
\usepackage{mathrsfs}
\usepackage{tikz}
\usepackage{thmtools}

\usepackage{hyperref}
\hypersetup{
    colorlinks=true,
    linkcolor=blue,
    citecolor=red,
    filecolor=magenta,      
    urlcolor=cyan,
}

\renewcommand{\leq}{\leqslant}
\renewcommand{\geq}{\geqslant}

\newcommand{\R}{\mathbf{R}}
\newcommand{\C}{\mathbf{C}}
\newcommand{\CC}{\mathsf{C}}
\renewcommand{\L}{\mathsf{L}}

\newcommand{\Oct}{\mathbf{O}}
\newcommand{\quat}{\mathbf{H}}

\newcommand{\N}{\mathbb{N}}

\DeclareMathOperator{\conv}{\mathrm{conv}}
\DeclareMathOperator{\range}{\mathrm{range}}

\DeclareMathOperator{\mathspan}{\mathrm{span}}

\DeclareMathOperator{\PSD}{\mathsf{PSD}}
\DeclareMathOperator{\Her}{\mathsf{Her}}

\DeclareMathOperator{\inter}{\mathrm{int}}
\DeclareMathOperator{\relint}{\mathrm{relint}}

\DeclareMathOperator{\face}{\mathrm{face}}

\renewcommand{\int}{\mathrm{int}}

\DeclareMathOperator{\cone}{\mathrm{cone}}

\theoremstyle{plain}
\newtheorem{theorem}{Theorem}
\newtheorem*{theorem*}{Theorem}

\newtheorem{proposition}{Proposition}
\newtheorem*{proposition*}{Proposition}

\newtheorem*{claim*}{Claim}
\newtheorem*{lemma*}{Lemma}

\newtheorem*{conjecture*}{Conjecture}

\theoremstyle{remark}

\newtheorem{example}{Example}

\DeclareMathOperator{\SCF}{\mathscr{S}}

\def\beq{\begin{equation}}
\def\eeq{\end{equation}}
\def\bq{\begin{quote}}
\def\eq{\end{quote}}
\def\ben{\begin{enumerate}}
\def\een{\end{enumerate}}
\def\bit{\begin{itemize}}
\def\eit{\end{itemize}}

\def\ra{\rightarrow}

\def\lb{\left(}
\def\rb{\right)}
\def\lset{\lbrace}
\def\rset{\rbrace}

\def\r|{\right|}

\def\ident{\textnormal{id}}

\DeclareMathOperator{\Pos}{Pos}

\DeclareMathOperator{\EB}{EB}
\DeclareMathOperator{\LorFact}{LorFact}

\DeclareMathOperator{\LorEB}{LorEB}

\DeclareMathOperator{\EA}{EA}

\theoremstyle{plain}
\newtheorem{thm}{Theorem}
\newtheorem{lem}[thm]{Lemma}
\newtheorem{cor}[thm]{Corollary}
\newtheorem{prop}[thm]{Proposition}

\theoremstyle{definition}

\newenvironment{theoremrestated}[1]{%
  \begingroup
  \renewcommand{\thetheorem}{\ref{#1}}%
  \begin{theorem}
}{%
  \end{theorem}
  \endgroup
}

\usepackage[dvipsnames]{xcolor}
\newcommand{\sqmap}[4]{%
\left\langle 
\begin{smallmatrix} 
 #1 &  #2 \\
 #3 &  #4
\end{smallmatrix}
\right\rangle
}

\newcommand{\Sqmap}[4]{
\left\langle 
\begin{matrix}
    #1 & #2 \\ #3 & #4
  \end{matrix}\right\rangle 
}

\author{Guillaume Aubrun}
\address{\small{Institut Camille Jordan, Universit\'{e} Claude Bernard Lyon 1, 43 boulevard du 11 novembre 1918, 69622 Villeurbanne cedex, France}}
\email{aubrun@math.univ-lyon1.fr}
\author{Francesca La Piana}
\address{\small{Department of Mathematics, University of Oslo, P.O. box 1053, Blindern, 0316 Oslo, Norway}}
\address{\small{Department of Mathematics, Saarland University, Campus Saarbr\"ucken, 66123 Saarland, Germany}}

\email{francesca.la-piana@math.uni-sb.de}
\author{Alexander M\"uller-Hermes}
\address{\small{Department of Mathematics, University of Oslo, P.O. box 1053, Blindern, 0316 Oslo, Norway}}
\email{muellerh@math.uio.no, muellerh@posteo.net}

\title{Factorization through Lorentz cones}

\begin{document}

\begin{abstract}
A pair of proper cones $(\CC_1,\CC_2)$ is said to have the Lorentz factorization property (LFP) if every $(\CC_1,\CC_2)$-positive map factors through a direct sum of Lorentzian cones, i.e., cones over Euclidean balls. Clearly, $(\CC_1,\CC_2)$ has the LFP if either $\CC_1$ or $\CC_2$ is a direct sum of Lorentzian cones, and our main goal is to find other examples. We show that such examples cannot be found for pairs $(\CC_1,\CC_2)$ where $\CC_1=\CC_2$, or in the case where both $\CC_1$ and~$\CC_2$ are polyhedral. We also focus on the case where $\CC_1=\CC_\square$ is the square-based cone in $\R^3$. Here, we show that $(\CC_\square,\CC)$ has the LFP whenever~$\CC$ is a symmetric cone, i.e., a direct sum of Lorentz cones, cones of positive semidefinite matrices over the real numbers, complex numbers or quaternions, and the cone of $3\times 3$ positive semidefinite matrices over the octonions. We leave open the question whether there are more examples, but we show that this list cannot be extended by any strictly convex cone $\CC$ or for a cone $\CC$ with $\dim(\CC)\leq 5$. Finally, we discuss an application to a problem in quantum information theory.
\end{abstract}

\maketitle

\tableofcontents

\section{Introduction}

Given two finite-dimensional real vector spaces $V_1$, $V_2$ and proper cones $\CC_1 \subseteq V_1$ and $\CC_2 \subseteq V_2$, we denote by $\Pos(\CC_1,\CC_2)$ the set of $(\CC_1,\CC_2)$-positive maps, i.e., linear maps $P : V_1 \to V_2$ such that $P(\CC_1) \subseteq \CC_2$. We will simply call a map positive if it is clear from context which cones are involved. The set $\Pos(\CC_1,\CC_2)$ is a proper cone in the vector space $\mathcal{L}(V_1,V_2)$. An important example of a proper cone is given by the $n$-dimensional \emph{Lorentz cone} defined for any $n\in \N^*$ as
\[ \L_{n} = \{ (t,x_1,\dots, x_{n-1}) \in \R^n  \st t \geq (x^2_1+\cdots +x^2_{n-1})^{1/2} \}.\]
We say that a cone is \emph{Lorentzian} if it is isomorphic to $\L_n$ for some $n\in \N$. Equivalently, a proper cone $\CC \subset V$ is Lorentzian if and only if there exists on $V$ a quadratic form $q$ of signature $(1,\dim(V)-1)$ such that $\CC \cup (-\CC) = \{x \in V \st  q(x) \geq 0\}$.
In~\cite{aubrun2023annihilating,LPMH25} we introduced the cone $\LorFact(\CC_1,\CC_2)$ of \emph{Lorentz factorizable maps} as
\[ \LorFact(\CC_1,\CC_2) = \conv \{\beta \alpha \st \alpha \in \Pos(\CC_1,\L), \ \beta \in \Pos(\L,\CC_2), \ \L \textnormal{ Lorentzian cone} \} .\]
The cone $\LorFact(\CC_1,\CC_2)$ is also proper in the vector space $\mathcal{L}(V_1,V_2)$ and in particular it is closed (see~\cite{LPMH25}). Clearly, we have $\LorFact(\CC_1,\CC_2)\subseteq \Pos(\CC_1,\CC_2)$ and it is a natural question when the containment is strict. We say that a pair of proper cones $(\CC_1,\CC_2)$ has the \emph{Lorentz factorization property} (LFP) if $\LorFact(\CC_1,\CC_2)= \Pos(\CC_1,\CC_2)$, i.e., if every $(\CC_1,\CC_2)$-positive map factors through Lorentz cones.

Similar concepts have been studied in the context of Banach spaces. The Hilbert space factorization norm $\gamma_2$, which goes back to Grothendieck, quantifies the factorization of linear operators between Banach spaces through Hilbert spaces (see~\cite[\textsection 13]{tomczak1989banach}). The notion of Lorentz factorizable maps is a generalization of the concept of Hilbert-space factorizable operators to the setting of ordered vector spaces (see also~\cite{LPMH25} for a quantitative connection for cones over finite-dimensional normed spaces). The analogue of the LFP in the setting of the isometric theory of Banach spaces has been studied in~\cite{AMH25}.

Our main motivation for studying when pairs of cones have the LFP comes from a problem inspired by quantum information theory. In~\cite{aubrun2023annihilating} two of the authors introduced the notion of \emph{resilience} for a pair of cones $(\CC_1,\CC_2)$, where a certain analogue of a well-known problem in quantum information theory has a negative answer. They also showed that a pair $(\CC_1,\CC_2)$ having the LFP implies that the pair $(\CC_2,\CC_1)$ is resilient. For more information on resilience see Section \ref{section:resilence}. 

\subsection*{Main results}

Clearly, the pair $(\CC_1,\CC_2)$ has the LFP if $\CC_1$ or $\CC_2$ is Lorentzian and more generally if $\CC_1$ or $\CC_2$ is isomorphic to a direct sum of Lorentz cones. It is expected that having the LFP is quite restrictive. Our first main result confirms this intuition:

\begin{thm}\label{thm:SameCone}
Let $\CC$ be a proper cone. The following are equivalent
\begin{enumerate}
\item The pair $(\CC,\CC)$ has the LFP.
\item The cone $\CC$ is isomorphic to a direct sum of Lorentz cones.
\end{enumerate}
\end{thm}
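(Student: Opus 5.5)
The direction (2)$\Rightarrow$(1) is immediate: if $\CC \simeq \L_{n_1}\oplus\cdots\oplus\L_{n_k}$, write a positive map $P$ as $\sum_i P\pi_i\iota_i$ via the canonical injections $\iota_i$ and projections $\pi_i$, or factor $P$ directly through the direct sum. Each summand $\iota_i\pi_i$ sends $\CC$ to $\L_{n_i}$ and $\L_{n_i}$ to $\CC$. So $P=\sum_i (P\iota_i)\pi_i$ lies in $\LorFact(\CC,\CC)$. All the work is in (1)$\Rightarrow$(2). The plan is to apply the LFP to the identity map: $\mathrm{id}_V\in\Pos(\CC,\CC)$, so by hypothesis
\[ \mathrm{id}_V=\sum_{i=1}^k \beta_i\alpha_i,\qquad \alpha_i\in\Pos(\CC,\L^{(i)}),\ \beta_i\in\Pos(\L^{(i)},\CC). \]
By Carathéodory the sum is finite.

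The key step is an extremality argument. The identity generates an extreme ray of $\Pos(\CC,\CC)$ only for irreducible cones, so I first reduce to that case. Decompose $\CC=\CC^{(1)}\oplus\cdots\oplus\CC^{(m)}$ into indecomposable summands; this decomposition is unique. It is enough to show that each indecomposable summand is Lorentzian. Compose the decomposition of the identity with the projection onto, and inclusion of, the summand $\CC^{(j)}$. This shows that $\mathrm{id}_{\CC^{(j)}}$ is Lorentz factorizable, so we may assume $\CC$ is indecomposable.

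For indecomposable $\CC$, I claim $\mathrm{id}$ spans an extreme ray of $\Pos(\CC,\CC)$. Suppose $\mathrm{id}=P+Q$ with $P,Q$ positive. Then for every extreme ray $x$ of $\CC$ we get $x=Px+Qx$, so $Px=\lambda(x)x$. A standard argument then applies: a linear map preserving every extreme ray has an eigenvalue function $\lambda$ that is constant on each "connected" class of extreme rays, meaning classes linked through linear dependencies. Indecomposability forces $\lambda$ to be constant. I expect this to be the main technical obstacle, namely making precise that non-constant $\lambda$ yields a direct-sum splitting of $\CC$. The splitting would be obtained via the eigenspaces of $P$, whose sum is $V$ and each of which is spanned by extreme rays.

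Given extremality, each $\beta_i\alpha_i$ is a multiple of $\mathrm{id}$, and some term satisfies $\beta\alpha=c\,\mathrm{id}$ with $c>0$. Then $\alpha:V\to W$ is injective and $\beta\alpha=\mathrm{id}$ after rescaling. Set $E=\alpha(V)$. Then $\alpha(\CC)\subseteq\L\cap E$ and $\beta(\L\cap E)\subseteq\CC$, so $\alpha(\CC)=\L\cap E$. The section of a Lorentz cone by a subspace meeting its interior is Lorentzian, since the restricted quadratic form has signature $(1,\dim E-1)$. Moreover $E$ does meet the interior: $\alpha$ maps interior points of $\CC$ into $\L\cap E$, and $\alpha(\CC)$ is full-dimensional in $E$, so $\L\cap E$ is a proper cone in $E$ and hence its interior lies in $\inter\L$. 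Therefore $\CC\simeq\L\cap E$ is Lorentzian; in low dimensions it is a ray, which is also Lorentzian, namely $\L_1$. This completes (1)$\Rightarrow$(2).
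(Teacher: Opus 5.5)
Your proof is correct and follows the paper's route: reduce to an indecomposable summand, use that $\ident_V$ spans an extreme ray of $\Pos(\CC,\CC)$ (the paper cites this from Loewy--Schneider, and your eigenspace splitting is the standard proof of it), then extract a single factorization $\beta\alpha=\ident_V$ to sandwich $\CC\simeq\alpha(\CC)=\L\cap E$. The only step to tighten is the inference ``$\L\cap E$ is proper in $E$, hence its interior lies in $\inter\L$.'' This fails for general cones; here it needs the fact that the proper faces of $\L$ are rays: if $E$ missed $\inter\L$, a separating $f\in\L^*\setminus\{0\}$ vanishing on $E$ would give $\L\cap E\subseteq\ker f\cap\L$, which is at most a ray, and that is impossible once $\dim E\geq 2$.
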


In particular, a proper cone $\CC$ is a direct sum of Lorentz cones if and only if the pairs $(\CC,\CC')$ have the LFP for any proper cone $\CC'$. Restricting to the family of \emph{polyhedral cones}, i.e., cones with a finite number of extremal rays, we have a stronger result. 

\newpage

\begin{thm}\label{thm:LFPPolyhedral}
Let $\CC_1$ and $\CC_2$ be proper polyhedral cones. The following are equivalent
\begin{enumerate}
\item The pair $(\CC_1,\CC_2)$ has the LFP.
\item Either $\CC_1$ or $\CC_2$ is classical.
\end{enumerate}
\end{thm}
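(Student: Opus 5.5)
The implication (2)$\Rightarrow$(1) is immediate. I am assuming that ``classical'' means isomorphic to $\R_+^n$, which is the only way a polyhedral cone can be a direct sum of Lorentz cones, since $\L_n$ is not polyhedral for $n\geq 3$. Then $\R_+^n=\L_1\oplus\cdots\oplus\L_1$, so every positive map trivially factors through a direct sum of Lorentz cones.

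For (1)$\Rightarrow$(2) I would argue by contraposition and duality. Assume neither $\CC_1$ nor $\CC_2$ is simplicial, and look for a linear functional on $\mathcal{L}(V_1,V_2)$ that is nonnegative on every product $\beta\alpha$ but negative on some $P\in\Pos(\CC_1,\CC_2)$. Such a functional is a tensor $T\in V_1\otimes V_2^*$, viewed as a map $T:V_2\to V_1$, paired with maps via $\langle T,P\rangle=\mathrm{tr}(PT)$. Then $\mathrm{tr}(\beta\alpha T)=\mathrm{tr}(\alpha T\beta)$, so the condition on factorizations reads
\[ \alpha T \beta \in \Pos(\L,\L)^* \quad\text{for all Lorentzian }\L,\ \alpha\in\Pos(\CC_1,\L),\ \beta\in\Pos(\L,\CC_2). \]
On the other hand, negativity on some positive map means exactly $T\notin \Pos(\CC_1,\CC_2)^*=\cone\{x\otimes \varphi \st x\in\CC_1,\ \varphi\in\CC_2^*\}$. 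So I need a non-separable tensor all of whose ``Lorentz compressions'' $\alpha T\beta$ lie in $\Pos(\L,\L)^*$. This dual cone can be described concretely through the S-lemma characterization of $\Pos(\L_n,\L_m)$: it is strictly larger than the separable cone, and it contains every element $\alpha T\beta$ for which the bilinear form $(u,w)\mapsto \langle w,\alpha T\beta u\rangle$ is suitably dominated by the Lorentz quadratic forms.

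To build $T$ I would exploit polyhedrality through Radon-type relations. Since $\CC_1$ is not simplicial, its extreme rays $x_1,\dots,x_N$ satisfy a nontrivial relation $\sum_{i\in I} a_i x_i=\sum_{j\in J} b_j x_j$ with $I\cap J=\emptyset$ and $a_i,b_j>0$. Since $\CC_2$ is not simplicial, neither is $\CC_2^*$, so the extreme rays $\varphi_k$ of $\CC_2^*$ satisfy a similar relation. With these relations I would form
\[ T=\sum_{i,k}\epsilon_{ik}\, x_i\otimes\varphi_k, \qquad \epsilon_{ik}\in\{\pm1\}\ \text{(times the Radon coefficients)}, \]
choosing the signs in a checkerboard pattern adapted to the two relations. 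The relations should let me check directly that $T$ is not separable, for instance by exhibiting an explicit positive map on which $T$ is negative, in the spirit of the square-based cone. Alternatively, I would first reduce to small cases by checking that the LFP passes to retracts and sections, and that every non-simplicial polyhedral cone contains a suitable minimal non-simplicial configuration.

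The main obstacle is verifying that $\alpha T\beta\in\Pos(\L,\L)^*$ for all Lorentz factorizations. The point I would try to exploit is that a positive $\alpha:\CC_1\to\L$ sends the extreme rays $x_i$ to vectors $u_i=\alpha x_i$ in $\L$. The Radon relation then imposes a linear relation among the $u_i$, and the Lorentz geometry (reverse Cauchy--Schwarz, $q(u,v)\geq\sqrt{q(u)q(v)}$ on $\L$) should force the resulting compressed tensor to satisfy the S-lemma dual condition. I expect this to work because, in $\L$, the relation can be absorbed by the quadratic form, whereas in the polyhedral cone itself it cannot. Concretely, I would first try to prove that $\sum \epsilon_{ik}\,\langle \beta^*\varphi_k, w\rangle\langle v,\alpha x_i\rangle$ is controlled by $q$, and reuse whatever obstruction was used for Theorem~\ref{thm:SameCone}.
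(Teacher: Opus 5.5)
\textbf{There is a genuine gap in (1)$\Rightarrow$(2).} Your implication (2)$\Rightarrow$(1) is fine. For (1)$\Rightarrow$(2), however, the proposal contains no proof.

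\emph{The key step is not proved.} Everything rests on showing that your tensor $T$ satisfies $\mathrm{tr}(\beta\alpha T)\geq 0$ for every Lorentz factorization while $T\notin\Pos(\CC_1,\CC_2)^*$. You leave this as something you ``expect to work'', supported only by a heuristic about reverse Cauchy--Schwarz. That some separating $T$ exists is just Hahn--Banach for the closed cone $\LorFact(\CC_1,\CC_2)$, i.e.\ a restatement of the failure of the LFP. The whole difficulty is to produce one and verify it.

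\emph{The sketched construction degenerates.} Suppose the signs follow the product pattern $\epsilon_{ik}=s_it_k$ read off from the relations $\sum_i s_ia_ix_i=0$ in $V_1$ and $\sum_k t_kc_k\varphi_k=0$ in $V_2^*$, with the Radon coefficients attached. Then $T=\bigl(\sum_i s_ia_ix_i\bigr)\otimes\bigl(\sum_k t_kc_k\varphi_k\bigr)=0$. This already happens for $\CC_\square$, where all coefficients equal $1$. Any other pattern would need its own justification for both non-separability and Lorentz-positivity, and none is given.

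\emph{Two further problems.} The obstruction behind Theorem~\ref{thm:SameCone} is not a dual witness. It is the primal sandwiching Lemma~\ref{lemma:lorentz-extreme} applied to the extremal map $\ident_V$, so there is nothing to ``reuse'' in your scheme. Also, the LFP does \emph{not} pass to arbitrary sections: $\CC_\square$ is a section of $\R_+^4$ and $(\CC_\square,\R_+^4)$ has the LFP, while $(\CC_\square,\CC_\square)$ does not. Only retracts and faces are safe, as in Proposition~\ref{prop:LFP-passesTo}.

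\textbf{The missing idea is that primal lemma.} An extremal $T'\in\Pos(\CC_1,\CC_2)$ of rank $k$ is Lorentz-factorizable only if $T'(\CC_1)\subseteq\L\subseteq\CC_2$ for a $k$-dimensional Lorentzian $\L\subseteq\range(T')$, and such an $\L$ is strictly convex when $k=3$. The paper's argument runs as follows.
\begin{enumerate}
\item It uses the reduction you mention only as an alternative: by Lemma~\ref{retract-NonClassical}, non-classical polyhedral cones have non-classical $3$-dimensional retracts, so both cones may be taken $3$-dimensional.
\item It takes consecutive extremal rays $x_1,\dots,x_4$ of $\CC_1$ and $y_1,\dots,y_4$ of $\CC_2$. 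It then considers the face $\mathcal{F}$ of maps with $T(x_1)\in G_{12}$, $T(x_2)\in\R_+y_2$, $T(x_3)\in\R_+y_3$, $T(x_4)\in G_{34}$.
\item The only rank-one extremals of $\mathcal{F}$ are $y_3f_{12}$ and $y_2f_{34}$. Yet $\mathcal{F}$ contains a rank-$3$ map, built from an outer quadrilateral $\cone\{z,x_2,x_3,z'\}\supseteq\CC_1$ sent onto $\cone\{y_1,y_2,y_3,y_4\}$. By Lemma~\ref{lemma:rank-not-2}, $\mathcal{F}$ therefore contains an extremal $T'$ of rank $3$.
\item $T'(\CC_1)$ contains multiples of $y_2$, $y_3$ and their sum. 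That sum lies on the edge $G_{23}\subseteq\partial\CC_2$, so no strictly convex $\L$ fits between $T'(\CC_1)$ and $\CC_2$. Hence $T'$ is not Lorentz-factorizable.
\end{enumerate}
This is what your proposal would need to supply, in primal or dual form.
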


Here, we call a cone \emph{classical} if it is proper in a $d$-dimensional vector space and has exactly $d$ extremal rays. Any base of a classical cone is a simplex. Our terminology is motivated by a connection to quantum information theory (see~\cite{aubrun2021entangleability}), where classical cones correspond to classical physical theories.

To find interesting examples of pairs with the LFP, we consider the special case where the first cone is the \emph{square-based cone} in $\R^3$ given by 
\[ \CC_{\square} = \{ (x,y,z) \in \R^3 \st |y|+|z| \leq x \} .\]
In this case, we have the following theorem:

\begin{thm} \label{thm:symmetric-cones-LFP}
If $\CC$ is a symmetric cone, then $(\CC_\square,\CC)$ has the LFP.
\end{thm}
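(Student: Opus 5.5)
The approach is to write a $(\CC_\square,\CC)$-positive map as a list of four vectors in $\CC$ and then use the Jordan-algebraic structure of $\CC$ to decompose that list into pieces that factor through a single Lorentz cone. The extreme rays of $\CC_\square$ are $u_1=(1,1,0)$, $u_2=(1,0,1)$, $u_3=(1,-1,0)$, $u_4=(1,0,-1)$, and they satisfy exactly one linear relation, $u_1+u_3=u_2+u_4$. So a linear map $P:\R^3\to V$ with $P(\CC_\square)\subseteq\CC$ is the same thing as a quadruple $(a,b,c,d)=(Pu_1,Pu_2,Pu_3,Pu_4)\in\CC^4$ with $a+c=b+d=:x$. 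Since $\LorFact(\CC_\square,\CC)$ is a closed convex cone, it suffices to show that every such quadruple is a sum of quadruples coming from maps that factor through a Lorentz cone. By closedness we may also assume that $x$ lies in the interior of $\CC$.

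The first step uses homogeneity of the symmetric cone. Let $e$ be the unit of the Euclidean Jordan algebra $J$ with $\CC$ as its cone of squares. There is an automorphism $g$ of $\CC$ with $g(x)=e$; replacing $P$ by $gP$ does not affect membership in $\LorFact$, so we may take $x=e$. The quadruple is then determined by the pair $(a,b)$, which lies in the product of order intervals $[0,e]\times[0,e]$, and $c=e-a$, $d=e-b$. This set of pairs is convex and compact, and the correspondence with positive maps is linear. So it is enough to treat the extreme points, and the extreme points of the order interval $[0,e]$ in a Euclidean Jordan algebra are exactly the idempotents. We are reduced to the case where $a=p$ and $b=q$ are idempotents.

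The second step analyses a pair of idempotents $p,q$ in $J$. The Jordan subalgebra generated by $e,p,q$ is special by Shirshov--Cohn, which matters because it includes the exceptional $3\times 3$ octonionic case. Halmos-type two-projection theory (equivalently, the Peirce decomposition with respect to $p$ and $q$) then splits it into pieces of two kinds:
\begin{itemize}
\item Pieces where $p$ and $q$ commute. There the quadruple $(p,q,e-p,e-q)$ lives in an associative subalgebra, which is a classical cone, and factoring through a direct sum of copies of $\L_1$ is immediate.
\item ``Generic position'' pieces. There $p$ and $q$ look like two rank-one projections in $2\times 2$ real symmetric matrices, tensored with (or spread over) a reducing family.
\end{itemize}
A generic piece is a spin factor whose cone of squares is Lorentzian, namely $\PSD_2(\R)\cong \L_3$. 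The quadruple is the image of $u_1,\dots,u_4$ under a map $\alpha:\R^3\to \R^3$ sending $\CC_\square$ into $\L_3$: the four vertices of the square go to four points $(1,\cos\theta_i,\sin\theta_i)$ on the circle, with $\theta_3=\theta_1+\pi$ and $\theta_4=\theta_2+\pi$. This is followed by the Jordan embedding $\beta:\PSD_2(\R)\to J$, which is positive because squares map to squares. Summing over the pieces expresses $P$ as an element of $\LorFact(\CC_\square,\CC)$.

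The main obstacle is the second step: making the two-idempotent decomposition rigorous uniformly across all simple Euclidean Jordan algebras, and in particular checking that each generic piece really embeds positively, via a Jordan homomorphism, the cone $\PSD_2(\R)$ rather than something larger. I would first try the Peirce decomposition relative to $p$, writing $q=\begin{pmatrix} q_{11}&q_{12}\\ q_{21}&q_{22}\end{pmatrix}$. I would then use the spectral decomposition of $U_p q$ inside $J_1(p)$, the Peirce-one space of $p$, so that the angle operator diagonalizes and the problem reduces to a direct sum of $2\times 2$ blocks. If that is delicate in the octonionic case, a fallback is to treat the simple components separately and use that in $\Her_3(\Oct)$ any two idempotents lie in a subalgebra isomorphic to $\Her_3$ over an associative subalgebra of $\Oct$.
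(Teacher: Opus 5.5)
Your argument is correct, and it follows a genuinely different route from the paper. The paper first reduces, using Vinberg's classification and Proposition~\ref{prop:LFP-passesTo}, to $\PSD_n(\C)$ (obtaining $\PSD_n(\R)$ and $\PSD_n(\quat)$ as retracts) and to $\PSD_3(\Oct)$. It then works with \emph{extremal} maps. For $\PSD_n(\C)$ it imports the fact that a rank-$3$ extremal map sends the extreme rays of $\CC_\square$ to rank-one matrices, so its image lies in the Lorentzian face $\face(A_1+A_4)\simeq\PSD_2(\C)\simeq\L_4$. For $\PSD_3(\Oct)$ it combines the known facial structure (rank-one extremals, $\face(A+B)\simeq\L_{10}$) with Proposition~\ref{prop:extremality-conditions} to find such a face case by case.

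You never use extremal maps of $\Pos(\CC_\square,\CC)$. Homogeneity normalizes $P(u_1+u_3)=e$, the normalized slice is affinely $[0,e]\times[0,e]$, and the spectral theorem reduces to pairs of idempotents. The two-idempotent structure then gives an explicit decomposition into entanglement-breaking pieces and pieces factoring through $\PSD_2(\R)\simeq\L_3$. This is uniform over all simple summands: Shirshov--Cohn replaces the separate octonionic analysis, and you need only the Koecher--Vinberg correspondence rather than the classification. The price is heavier Jordan-algebraic input. The paper's proof instead stays within its ``image lies in a Lorentzian face'' mechanism (Lemma~\ref{lemma:lorentz-extreme}), the same tool that drives its converse results.

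The step you flag closes cleanly if you split inside the subalgebra $A$ generated by $e,p,q$ rather than in an auxiliary Hilbert space. This matters, since each $c_k-p_k$ must lie in $\CC$. As a Euclidean Jordan algebra, $A=\bigoplus_k A_k$ with simple ideals $A_k$ and central idempotents $c_k\in A$ summing to $e$; set $p_k=c_k\circ p$ and $q_k=c_k\circ q$. By Shirshov--Cohn $A$ is special. In a special realization $(p-q)^2$ commutes with $p$ and $q$, hence is central in $A$, so $(p_k-q_k)^2=\lambda_k c_k$ in each $A_k$. There are then three cases.

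If $\lambda_k=0$, then $q_k=p_k$.

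If $\lambda_k=1$, then $q_k=c_k-p_k$, using $(p_k-(c_k-q_k))^2=(1-\lambda_k)c_k$. In this case and the previous one, the piece is entanglement breaking.

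If $0<\lambda_k<1$, then $p_k\circ q_k=\tfrac12(p_k+q_k-\lambda_k c_k)$. So $A_k=\mathspan(c_k,p_k,q_k)$ is a $3$-dimensional simple Euclidean Jordan algebra, i.e.\ $\Her_2(\R)$, with $p_k,q_k$ of rank one.

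Hence a generic piece is never ``larger'' than $\PSD_2(\R)$, and multiplicities play no role.
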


Here, a proper cone $\CC$ is called \emph{symmetric} if it is selfdual and its automorphism group acts transitively on the interior $\inter(\CC)$. Vinberg's classification theorem~\cite{vinberg1963theory} shows that a proper cone $\CC$ is symmetric if and only if it is a direct sum of cones from the following families:
\begin{itemize}
\item Lorentz cones $\L_n$ for $n\in\N$.
\item Cones $\PSD_n(\R),\PSD_n(\C),\PSD_n(\quat)$ for $n\in\N$ of positive semidefinite $n\times n$ matrices over the real numbers, the complex numbers and the quaternions, respectively.
\item The cone $\PSD_3(\Oct)$ of positive semidefinite $3\times 3$ matrices over the octonions.
\end{itemize}
We have the isomorphisms $\PSD_2(\R) \simeq \L_3$, $\PSD_2(\C) \simeq \L_4$, $\PSD_2(\quat) \simeq \L_6$, and $\PSD_2(\Oct) \simeq \L_{10}$.
Classical cones are included in this list since they are direct sums of $1$-dimensional cones. 

Lorentzian cones are the only examples of symmetric cones that are strictly convex, i.e., such that every non-zero boundary point generates an extremal ray. In this regard, we have the following partial converse to Theorem \ref{thm:symmetric-cones-LFP} in the case where $\CC$ is strictly convex. 

\begin{thm} \label{theorem:strictly-convex}
Let $\CC$ be a proper strictly convex cone such that $(\CC_\square,\CC)$ has the LFP. Then $\CC$ is Lorentzian.
\end{thm}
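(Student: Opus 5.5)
We plan to exploit the extremal structure of $\Pos(\CC_\square,\CC)$. The square-based cone has four extremal rays, generated by $e_{\pm}^{y}=(1,\pm1,0)$ and $e_{\pm}^{z}=(1,0,\pm1)$, which satisfy the single linear relation $e^y_+ + e^y_- = e^z_+ + e^z_-$. A linear map $P:\R^3\to V$ is therefore the same thing as a quadruple $(a_1,a_2,b_1,b_2)=(Pe^y_+,Pe^y_-,Pe^z_+,Pe^z_-)$ with $a_1+a_2=b_1+b_2$. It is $(\CC_\square,\CC)$-positive if and only if all four vectors lie in $\CC$. On the other side, a map $\alpha\in\Pos(\CC_\square,\L)$ sends the four rays into a Lorentz cone. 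We first describe which quadruples arise from Lorentz factorizations. If $\beta\alpha$ is such a factorization, then $\alpha(\CC_\square)$ is a cone with four generators inside $\L$, spanning at most three dimensions. We may therefore restrict $\L$ to $\L_3$, since $\mathspan\alpha(\CC_\square)\cap\L$ is Lorentzian or a face, hence a direct sum of at most two rays. The essential constraint is that four points on the Lorentz cone with $a_1+a_2=b_1+b_2$, and in particular four boundary points, satisfy quadratic relations coming from the invariant form $q$.

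The strategy is as follows. Pick a point $c\in\inter(\CC)$ and four boundary points $a_1,a_2,b_1,b_2\in\partial\CC$ with $a_1+a_2=b_1+b_2=c$, chosen to generate distinct extremal rays; strict convexity guarantees that boundary points are extremal. The resulting map $P$ lies in $\Pos(\CC_\square,\CC)$. We aim to show that $P$ spans an extremal ray of $\Pos(\CC_\square,\CC)$. Any decomposition $P=P'+P''$ into positive maps forces each $P'e$ to be a multiple of $Pe$ on the four extremal generators, because the images are extremal rays of $\CC$. The linear relation $a_1+a_2=b_1+b_2$ then forces the multiples to coincide, provided $a_1,a_2,b_1,b_2$ are in general position, i.e.\ span a $3$-dimensional space with only the one relation. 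Since $\LorFact$ is the convex hull of the products $\beta\alpha$, the LFP implies that $P$ itself equals some $\beta\alpha$ with $\L=\L_n$.

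Next we analyse such a factorization. The vectors $\alpha(e)$ must be mapped by $\beta$ onto extremal rays of $\CC$. After replacing $\L$ by the smallest face containing $\alpha(\CC_\square)$, this forces the $\alpha(e)$ to lie on the boundary of $\L$. Otherwise some $\alpha(e)$ would be interior, and $\beta$ would map a full-dimensional neighbourhood into a single ray, making $\beta$ rank one; this contradicts $P$ having rank $3$. So we have four boundary points of a Lorentz cone (in $\L_3$, after restricting to the span) satisfying $\alpha(e^y_+)+\alpha(e^y_-)=\alpha(e^z_+)+\alpha(e^z_-)$. Such points lie on a conic: the section of $\partial\L_3$ by an affine plane is an ellipse, and the two chords share a midpoint. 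Applying $\beta$, which is injective on this $3$-dimensional span, we conclude that $a_1,a_2,b_1,b_2$ and $c/2$ satisfy the same configuration inside $\partial\CC$ intersected with a $3$-dimensional subspace.

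To finish, we vary the data to show that every $3$-dimensional section of $\CC$ through an interior point is a Lorentz cone. Fix a $3$-dimensional subspace $W$ meeting $\inter(\CC)$, and an affine base $B=W\cap\{\phi=1\}$, which is a strictly convex planar body. For any two chords of $B$ with a common midpoint $m$, the argument above yields that the four endpoints, together with $m$, are the image under an affine map of such a configuration on an ellipse. This is the step we expect to be the main obstacle: we must show that this local constraint forces $B$ to be an ellipse. Our plan is as follows. Fix $m$ and consider the central symmetry about $m$. The ellipse obtained from two chords is determined by $\beta$ restricted to a plane, and $\beta(\partial\L\cap W')$ should in fact contain a whole arc of $\partial B$. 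Indeed, an extremality argument applied to a fifth boundary point forces $\beta(\L)\cap W$ to equal $\CC\cap W$, since $\beta(\L)\subseteq\CC$ and both cones share the four extremal rays, together with a perturbation argument. If instead only four-point data can be extracted, we would use the classical characterization that a planar convex body is an ellipse if and only if all chords through every interior point have midpoints behaving affinely, in the spirit of Brunn's theorem. Once every $3$-dimensional section is Lorentzian, the classical characterization of ellipsoids (a convex body all of whose planar sections through an interior point are ellipses is an ellipsoid) shows that the base of $\CC$ is an ellipsoid, so $\CC$ is Lorentzian.
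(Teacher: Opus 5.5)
Your first half is sound and matches the paper. Four pairwise non-collinear extremal rays with $a_1+a_2=b_1+b_2$ give an extremal element of $\Pos(\CC_\square,\CC)$ (Corollary~\ref{cor:extremality-conditions}). The LFP together with extremality then produces a $3$-dimensional Lorentzian cone $\L$ with $P(\CC_\square)\subseteq\L\subseteq\CC\cap W$ (Lemma~\ref{lemma:lorentz-extreme}).

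\textbf{The gap.} It is exactly the step you flag as the main obstacle, and neither of your plans closes it. You infer that $\L=\CC\cap W$ because $\L\subseteq\CC$ and both cones share the four extremal rays. That inference is false. The unit ball of $\ell_4$ in $\R^2$ is strictly convex and contains the Euclidean disk, touching it exactly at $(\pm1,0)$ and $(0,\pm1)$. So for the cone $\{(t,u,v)\st (u^4+v^4)^{1/4}\leq t\}$, the extremal square map onto these four rays factors through $\L_3$ (indeed $\CC_\square\subseteq\L_3\subseteq$ this cone), yet the cone is not Lorentzian. The data from a single quadruple therefore carries no global information. None of your three tools supplies what is missing:
\begin{itemize}
\item the ``perturbation argument'' is never specified;
\item the claim that $\beta(\partial\L)$ contains an arc of $\partial B$ is unsupported;
\item the Brunn-type midpoint criterion is not a precise statement, and you do not say how four-point data would verify it.
\end{itemize}

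\textbf{The missing ingredient.} You need a rigidity statement comparing the Lorentz cones obtained from \emph{different} quadruples. The paper proves it as Lemma~\ref{lemma:unique-ellipse}: if $x,y,z$ are linearly independent extremal elements of $\CC$ and $\L_1,\L_2$ are $3$-dimensional Lorentzian cones containing them and contained in $\CC$, then $\L_1=\L_2$. Being inscribed forces both ellipses to be tangent at $x$ and $y$ to the support lines of the section. Three points together with the tangent lines at two of them determine at most one ellipse.

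\textbf{How it closes the argument.} Fix three extremal rays $x,y,z$ of $\CC\cap W$, and let $w$ be any further extremal ray.
\begin{itemize}
\item Rescale the four rays positively and pair them along the diagonals as in Lemma~\ref{lemma:rescaling}. This gives an extremal square map, hence an inscribed Lorentzian cone $\L_w$ containing $x,y,z,w$. (This is why you should work in the cone rather than with chords sharing a midpoint in a fixed base: three fixed points plus an arbitrary fourth are generally not endpoints of such chords.)
\item Uniqueness gives $\L_w=\L_{w'}$ for all such $w,w'$.
\item So a single Lorentzian cone contains every extremal ray of $\CC\cap W$, and therefore equals $\CC\cap W$.
\end{itemize}
Your final appeal to the classical characterization of ellipsoids then finishes the proof, as in the paper.
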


Finally, we note that the cone $\PSD_3(\R)$ residing in the $6$-dimensional space $\Her_3(\R)$ is the smallest symmetric cone that is not isomorphic to a direct sum of Lorentz cones. For cones in lower dimensions we have the following partial converse of Theorem \ref{thm:symmetric-cones-LFP}. 

\begin{thm} \label{theorem:dimension5}
Let $\CC$ be a proper cone with $\dim(\CC)\leq 5$ such that $(\CC_\square,\CC)$ has the LFP. Then $\CC$ is symmetric. 
\end{thm}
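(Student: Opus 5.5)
We need a characterization of when $(\CC_\square,\CC)$ has the LFP, and then a classification of cones of dimension at most $5$.

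\textbf{Step 1: reformulate the LFP for $\CC_\square$.} The cone $\CC_\square$ has four extremal rays $e_1,e_2,e_3,e_4$ with the single linear relation $e_1+e_3=e_2+e_4$. Hence a map $P\in\Pos(\CC_\square,\CC)$ is the same thing as a quadruple $(x_1,x_2,x_3,x_4)$ of elements of $\CC$ with $x_1+x_3=x_2+x_4$. A Lorentz-factorizable map $\beta\alpha$ sends the $e_i$ to $\beta(y_i)$, where $y_i\in\L$ and $y_1+y_3=y_2+y_4$. The key reduction is to show that such a quadruple in a Lorentz cone can always be taken inside a $3$-dimensional slice $\L_3$. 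Indeed, the $y_i$ span a space of dimension at most $3$; intersecting $\L$ with that span gives $\L_3$ or a smaller cone, which is a direct sum of Lorentz cones.

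So the LFP for $(\CC_\square,\CC)$ says the following: every "positive square" $(x_i)$ in $\CC$ is a sum of images of positive squares in $\L_3$ under maps in $\Pos(\L_3,\CC)$. I would then derive a necessary condition adapted to low dimensions. Take $P$ extremal in $\Pos(\CC_\square,\CC)$. By closedness and conicity of $\LorFact$, the map $P$ must itself equal some $\beta\alpha$ with $\alpha\in\Pos(\CC_\square,\L_3)$ and $\beta\in\Pos(\L_3,\CC)$. In particular the $x_i$ lie in the image $\beta(\L_3)$, which is a cone of dimension at most $3$ that is a linear image of $\L_3$ contained in $\CC$. So each extremal square in $\CC$ "lives on" a linear image of a disc cone inside $\CC$.

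\textbf{Step 2: classify by dimension.} Dimensions $1$ and $2$ are trivial, since every such cone is classical and hence symmetric. For $\dim\CC=3$, a generic face-to-face square exists whenever $\CC$ is not Lorentzian or classical. Concretely, I would pick four extreme rays so that the square map is extremal; by Step 1 it factors through an image of $\L_3$ in $\CC$, which has full dimension. Such an image is an ellipse-based cone, and since it is contained in $\CC$ it forces $\CC$ to contain enough elliptic structure that $\CC$ is Lorentzian unless it is polyhedral. The polyhedral case reduces to Theorem~\ref{thm:LFPPolyhedral}, giving classical. The strictly convex case is Theorem~\ref{theorem:strictly-convex}.

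For dimensions $4$ and $5$ I would examine the faces of $\CC$. Restricting a positive square to a face $F$ (squares with $x_i\in F$), together with the fact that $\beta$ composed with the face projection stays positive, suggests that each face $F$ also gives a pair $(\CC_\square,F)$ with the LFP. By induction, every proper face is then symmetric: a ray, a Lorentz cone, or a direct sum of these.

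\textbf{Step 3: assemble and identify the obstacle.} Next I would use the extremal-square condition across pairs of faces to show that $\CC$ decomposes as a direct sum whenever it has a classical-type splitting, and otherwise is Lorentzian. Note that in dimension at most $5$ the symmetric cones are exactly the direct sums of Lorentz cones, since $\PSD_3(\R)$ already has dimension $6$.

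The main obstacle is this last global step: showing that a cone all of whose faces are symmetric, and which satisfies the factorization condition for extremal squares, must be a direct sum of Lorentz cones. I expect this to require a case analysis on the facial structure in dimensions $4$ and $5$, using extremal squares built from two disjoint pairs of extreme rays to force either orthogonal splitting or quadric boundary.
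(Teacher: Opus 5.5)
Your proposal has a genuine gap: the cases $\dim(\CC)\in\{4,5\}$, which carry the substance of the theorem, are not proved, as you concede in Step~3. The reductions you do have are sound. Extremal maps factor through a single $3$-dimensional Lorentzian cone (Lemma~\ref{lemma:lorentz-extreme}), and faces inherit the LFP (Proposition~\ref{prop:LFP-passesTo}). But knowing by induction that every proper face is symmetric gives very little: every proper face of the Vinberg cone $\CC_{\text{Vin}}$ is a ray, a copy of $\R_+^2$ or a copy of $\L_3$, yet $\CC_{\text{Vin}}$ is not symmetric.

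The missing global ingredient is Proposition~\ref{prop:at-most-one-lorentz-face}: when $\dim(\CC)\leq 5$, there is at most one Lorentzian face of dimension $\geq 3$. Proving it requires four earlier results:
\begin{itemize}
\item Lemma~\ref{lemma:two-extreme-points}: either $\CC$ is strictly convex or $x+y\in\partial\CC$ for all extremal $x,y$.
\item Proposition~\ref{prop:SomeFacesLorentz}: $\face(x+y)$ is Lorentzian.
\item Proposition~\ref{prop:two-lorentzian-faces}: distinct strictly convex faces meet in at most a ray.
\item Proposition~\ref{prop:Crucial}: two such faces sharing a ray produce further faces $\face(x+x')$ of dimension $\geq 3$ that pairwise share rays.
\end{itemize}
Together these yield six pairwise non-collinear extremal vectors $x_{ij}$ in a $5$-dimensional space. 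Their linear relation is then ruled out by exhibiting a face of $\Pos(\CC_\square,\CC)$ containing no extremal rank-$3$ map.

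One still needs a case analysis after that. If there is no strictly convex face of dimension $\geq 3$, the criterion of~\cite{namioka1969tensor} and the count $\sum_i\dim(E_i)\leq n+1$ from Proposition~\ref{prop:extremality-conditions} give an extremal rank-$3$ map with two extremal vertices $x_i,x_j$ whose face $\face(x_i+x_j)$ is only $2$-dimensional. But Lemma~\ref{lemma:ellipse-sandwiching} puts the whole rank-$3$ image inside this face, a contradiction. If there is exactly one such face $F$, Lemmas~\ref{lemma:basic-strictlyconvex}, \ref{lemma:ellipse-sandwiching} and~\ref{lemma:face_with_no_EB} show that exactly $n-\dim(F)$ extremal rays lie outside $F$. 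This gives $\L_3\oplus\R_+$, $\L_4\oplus\R_+$ or $\L_3\oplus\R_+^2$. None of this is present in your plan.

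Your dimension-$3$ step is also incomplete. You assert, rather than prove, that the sandwiched ellipse ``forces'' $\CC$ to be Lorentzian unless it is polyhedral. The polyhedral/strictly convex split also omits cones that are neither, e.g.\ a cone over a half-disc, to which neither Theorem~\ref{thm:LFPPolyhedral} nor Theorem~\ref{theorem:strictly-convex} applies.

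The missing idea is the uniqueness statement of Lemma~\ref{lemma:unique-ellipse}. Fix three pairwise non-collinear extremal rays $x,y,z$. Each further extremal ray $w$ gives, via Lemma~\ref{lemma:rescaling}, Corollary~\ref{cor:extremality-conditions} and Lemma~\ref{lemma:lorentz-extreme}, a $3$-dimensional Lorentzian cone $\L_w$ with $x,y,z,w\in\L_w\subseteq\CC$. Supporting planes of $\CC$ at $x,y,z$ are tangent to every such $\L_w$, and three points with the tangent lines at two of them determine at most one ellipse. So all the $\L_w$ coincide, this single Lorentzian cone contains every extremal ray of $\CC$, and hence $\CC=\L_w$.
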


To make Theorem \ref{theorem:dimension5} more explicit, we state the complete list of symmetric cones in small dimensions. A $d$-dimensional proper cone $\CC$ for $d\leq 5$ is symmetric if 
\begin{itemize}
\item $d=1$ and $\CC$ is isomorphic to $\R_+$.
\item $d=2$ and $\CC$ is isomorphic to $\R_+^2$.
\item $d=3$ and $\CC$ is isomorphic to $\R_+^3$ or $\L_3$.
\item $d=4$ and $\CC$ is isomorphic to $\R_+^4$ or $\L_3 \oplus \R_+$ or $\L_4$.
\item $d=5$ and $\CC$ is isomorphic to $\R_+^5$ or $\L_3 \oplus \R_+^2$ or $\L_4 \oplus \R_+$ or $\L_5$.
\end{itemize}

A non-trivial example excluded by Theorem \ref{theorem:dimension5} is the so-called Vinberg cone $\CC_{\text{Vin}}\subseteq \R^5$ (see~\cite[Chapter I, Exercise 10]{faraut1994analysis}) given by 
\[
\CC_{\text{Vin}} = \lset (a,b_1,b_2,c_1,c_2)\in \R^5~:~ a\geq 0, ac_1\geq b^2_1, ac_2\geq b^2_2\rset .
\]
Geometrically, this cone corresponds to two $\L_3\simeq \PSD_2(\R)$ ``glued onto a ray", and it can be shown that this cone is homogeneous but not self-dual. As $\CC_{\text{Vin}}$ is not symmetric, Theorem \ref{theorem:dimension5} shows that $(\CC_\square,\CC_{\text{Vin}})$ does not have the LFP. 

\section{Preliminaries}

We will start by reviewing some notation and well-known preliminaries on proper cones and positive maps.  

\subsection{Proper cones and their geometry} Let $V$ denote a real finite-dimensional vector space and $\CC\subseteq V$ a convex cone. We say that $\CC$ is \emph{pointed} if it does not contain a line, i.e., if $\CC\cap (-\CC)=\lset 0\rset$, and we say that $\CC$ is \emph{generating} if it spans the entire ambient space $V$. Equivalently, a convex cone is generating if its interior is not empty. If $\CC$ is pointed, generating and closed, then we call it a \emph{proper cone}. If $\CC\subseteq V$ is a proper cone, then its dual
\[
\CC^*:=\lset f:V\ra \R~:~f(x)\geq 0, x\in\CC\rset\subseteq V^* ,
\]
is a proper cone as well. Throughout this article, we will only consider cones in finite-dimensional vector spaces and mostly focus on proper cones.

We say that a convex cone $\CC$ is a \emph{direct sum} of convex cones $\CC_1$ and $\CC_2$ if $\mathspan(\CC_1)\cap \mathspan(\CC_2)=\lset 0\rset$ and $\CC=\CC_1 + \CC_2$. We write $\CC=\CC_1\oplus \CC_2$ in this case. A convex cone $\CC$ is called \emph{decomposable} if there exist non-zero convex cones $\CC_1$ and~$\CC_2$ such that $\CC=\CC_1\oplus \CC_2$, and \emph{indecomposable} otherwise.

A convex cone $F\subseteq \CC$ is called a \emph{face} of $\CC$ if for any $x\in F$ and $y,z\in \CC$ with $x=y+z$, we have $y,z\in F$. We will sometimes need the following equivalent characterization of faces: A subset $F\subseteq \CC$ is a face of $\CC$ if for any $x\in F$ and $y\in \CC$ such that for some $\e>0$ we have $x-\e y\in\CC$ it follows that $y\in F$. The non-trivial faces of a cone $\CC$ are in $1$-to-$1$ correspondence with faces of a base of the cone. An \emph{extremal ray} of a cone $\CC$ is a $1$-dimensional face and we say that $x\in \CC$ is \emph{extremal} if $\R^+x$ is an extremal ray (note that $0\in \R^+$). For any $x\in \CC$ we set $\face(x)\subseteq \CC$ to be the smallest face of $\CC$ containing $x$. Note that $\face(x)=\R^+x$ if and only if $x$ is extremal and $\face(x)=\CC$ if and only if $x\in \text{int}(\CC)$. A proper cone $\CC\subseteq V$ is called
\begin{itemize}
\item \emph{classical}, if it has exactly $\dim(V)$ extremal rays.
\item \emph{polyhedral}, if it has finitely many extremal rays.
\item \emph{strictly convex}, if $\R^+x$ is an extremal ray for any nonzero boundary point $x\in \partial \CC$.
\end{itemize}

We also need a simple lemma about strictly convex cones.

\begin{lem} \label{lemma:basic-strictlyconvex}
Let $\CC \subseteq V$ be a strictly convex proper cone. Then
\begin{enumerate}
\item[(i)] Consider $z \in V$ such that $z \not\in \CC$ and $-z \not \in \CC$. There exist $x,y$ extremal in $\CC$ and non-collinear such that $z=x-y$.
\item[(ii)] Consider $a,b,c$ extremal and pairwise non-collinear in $\CC$. There exist a scalar $\lambda>0$ and an element~$d$, extremal in $\CC$ and non-collinear with $a$, $b$, $c$, such that $a+b=\lambda c +d$.
\end{enumerate}
\end{lem}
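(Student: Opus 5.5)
For both parts we fix an auxiliary affine hyperplane $H=\{f=1\}$ with $f\in\inter(\CC^*)$. Then $B=\CC\cap H$ is a compact convex base, and strict convexity of $\CC$ says exactly that every boundary point of $B$ (relative to $H$) is an extreme point of $B$. The extremal elements of $\CC$ are therefore the positive multiples of points of $\partial B$. Degenerate dimensions will be handled separately: if $\dim V\le 2$, a strictly convex proper cone must have $\dim V\le 2$ with exactly two extremal rays, and (i) is then immediate by writing $z$ in the basis of the two extremal rays. So we assume $\dim V\ge 2$ as needed.

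For (i), since $\pm z\notin\CC$, we consider the line $\{y+tz\}$ through an interior point. A cleaner approach is to use the two-dimensional plane $P=\mathspan(z,e)$ for some $e\in\inter(\CC)$. Then $\CC\cap P$ is a proper two-dimensional cone in $P$, namely the cone spanned by two rays $\R^+u$ and $\R^+v$ with $u,v\in\partial\CC$. These rays are non-collinear, because $e$ is interior, so the cone is two-dimensional.

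By strict convexity, $u$ and $v$ are extremal in $\CC$, not merely in $\CC\cap P$, since they are nonzero boundary points of $\CC$. In the basis $(u,v)$ of $P$ write $z=\alpha u+\beta v$. If $\alpha,\beta\ge 0$ then $z\in\CC$, and if $\alpha,\beta\le 0$ then $-z\in\CC$. Hence $\alpha$ and $\beta$ have strictly opposite signs, say $\alpha>0>\beta$. Then $x=\alpha u$ and $y=-\beta v$ do the job.

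For (ii), we set $w=a+b$, which lies in $\CC$. It is not extremal, since $a$ and $b$ are non-collinear and extremal, so in fact $w\in\inter(\CC)$. Indeed, if $w$ were a boundary point, strict convexity would make it extremal, and then $a,b\in\face(w)=\R^+w$, a contradiction.

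Now consider the ray $w-tc$ for $t\ge 0$. It starts in the interior, and it leaves $\CC$ for large $t$, because $-c\notin\CC$ by pointedness. Let $\lambda>0$ be the exit time, that is, the largest $t$ with $w-tc\in\CC$, and put $d=w-\lambda c$. Then $d\in\partial\CC$ and $d\neq 0$, since otherwise $a+b=\lambda c$ would force $a,b\in\face(c)=\R^+c$. By strict convexity $d$ is extremal.

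It remains to check non-collinearity. If $d=\mu c$, then $a+b=(\lambda+\mu)c$, which gives the same contradiction as before. If $d=\mu a$ with $\mu\ge 0$, then $(1-\mu)a+b=\lambda c$. If $\mu\le 1$, the left side is a sum of elements of $\CC$ equal to the extremal element $\lambda c$, so $b\in\R^+c$, a contradiction. If $\mu>1$, then $b=\lambda c+(\mu-1)a$ with $b$ extremal, so $c\in\R^+b$, again a contradiction. The case $d\in\R^+b$ is symmetric.

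The main subtlety is ensuring that the boundary points obtained in both parts are nonzero and lie on the boundary of $\CC$ itself. This is what lets strict convexity upgrade them to extremal elements, and the interior choice of $e$ and of $w$ takes care of it.
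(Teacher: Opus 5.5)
Your proof is correct and follows the same route as the paper: for (i) you reduce to a two-dimensional slice, and for (ii) you take the exit time of $a+b-tc$ exactly as the paper does. For (i) you slice with $\mathspan(z,e)$, $e\in\inter(\CC)$, where the paper uses $\mathspan(u,v)$ for a decomposition $z=u-v$ with $u,v\in\CC$; for (ii) your explicit checks that $a+b\in\inter(\CC)$, that $d\neq 0$, and that $d$ is non-collinear with $a$, $b$, $c$ fill in details the paper leaves implicit.
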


\begin{proof}
(i) The result is elementary if $\CC$ is $2$-dimensional (every $2$-dimensional cone is isomorphic to $\R_+^2$). For the general case, write $z = u-v$ for $u,v \in \CC$ and observe that $W \coloneqq \mathspan (u,v)$ has dimension $2$. By the $2$-dimensional case, there exist $x,y$ extremal in $\CC \cap W$ and non-collinear such that $z=x-y$. Since $\CC$ is strictly convex, $x$ and $y$ are extremal in $\CC$.

(ii) For $t \geq 0$, set $d(t)=a+b - t c \in V$. There exists $t > 0$ such that $d(t) \not \in \CC$ (otherwise, we would have $-c = \lim_{t \to + \iy} \frac{d(t)}{t} \in \CC$, a contradiction. Since $d(0) \in \inter(\CC)$, there exists $\lambda>0$ such that $d(\lambda) \in \partial \CC$. Since $d(\lambda) \neq 0$ and $\CC$ is strictly convex, $d(\lambda)$ is extremal in $\CC$.
 \end{proof}

\subsection{Retracts}

Let $\CC \subseteq V$ and $\CC' \subseteq V'$ be proper cones. We say that $\CC'$ is a \emph{retract} of $\CC$ if there exist positive maps $\alpha \in \Pos(\CC',\CC)$ and $\beta \in \Pos(\CC,\CC')$ such that $\beta \alpha = \ident_{V'}$. It is easy to see that both $\CC$ and $\CC'$ are retracts of the direct sum $\CC\oplus \CC'$. Moreover, we have the following lemma, which appeared in~\cite[Proposition S5]{aubrun2019universal} and which we reprove here for convenience.

\begin{lem} \label{retract-NonClassical}
Every proper non-classical polyhedral cone $\CC$ has a proper non-classical polyhedral retract $\CC'\subseteq \R^3$.
\end{lem}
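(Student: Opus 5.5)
We take a section by a suitable $3$-dimensional subspace and project back with a positive map. Let $\CC\subseteq V$ be proper, polyhedral and non-classical, with $d=\dim V$. We may assume $d\ge 3$, since every proper cone in dimension $1$ or $2$ is classical. We look for a $3$-dimensional subspace $W\subseteq V$ such that $\CC'=\CC\cap W$ is a proper polyhedral cone in $W$ with at least $4$ extremal rays, together with a positive projection $\beta\in\Pos(\CC,\CC')$ onto $W$. With $\alpha$ the inclusion $W\hookrightarrow V$ (positive because $\CC'\subseteq\CC$), we get $\beta\alpha=\ident_W$, so $\CC'$ is a retract. The cone $\CC'$ is polyhedral as an intersection of a polyhedral cone with a subspace, and it is pointed and closed. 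It is generating in $W$ as long as $W$ meets $\inter(\CC)$.

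\textbf{Step 1: reduction to $d\ge 4$.} If $d=3$, take $\CC'=\CC$. So assume $d\ge 4$ and proceed by induction on $d$: it suffices to find a non-classical polyhedral retract of dimension $d-1$, and the retract relation is transitive.

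\textbf{Step 2: the codimension-one retract.} Since $\CC$ is non-classical, it has $n\ge d+1$ extremal rays $x_1,\dots,x_n$. Fix a base $B$ of $\CC$, a polytope of dimension $d-1\ge 3$ with $n$ vertices; $B$ is not a simplex.
\begin{itemize}
\item Choose a vertex $v$ of $B$ and project $B$ along the direction from $v$ to a nearby point. Concretely, pick $w\in\inter(\CC)$ and let $\pi$ be the projection of $V$ onto a hyperplane $H$ through $w$, along the line $\R x_1$ with $x_1$ extremal. Then $\pi(\CC)$ is a polyhedral cone in $H$.
\item We need $\pi(\CC)\subseteq\CC\cap H$, so that $\pi$ is positive onto the section. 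Equivalently, for each extremal $x_i$, the line $x_i+\R x_1$ must hit $H$ inside $\CC$.
\end{itemize}
The natural choice is $H=\mathspan(F)$, where $F$ is a facet of $\CC$ not containing $x_1$. But then $\CC\cap H=F$ lies on the boundary, so this $H$ does not meet $\inter(\CC)$.

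The standard fix from \cite{aubrun2019universal} is a face retraction. Any face $F$ of $\CC$ is a retract whenever there is a positive projection onto $\mathspan(F)$. For $x_1$ extremal and $F$ a facet not containing it with $\CC=\cone(F\cup\{x_1\})$ (i.e. $\CC$ is a pyramid over $F$), the projection along $x_1$ onto $\mathspan F$ is positive. This handles pyramids: $F$ is non-classical when $\CC$ is, since $F$ has $n-1\ge d$ extremal rays in dimension $d-1$.

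\textbf{Step 3: non-pyramidal $\CC$.} When $\CC$ is not a pyramid, the plan is to pass to a simplicial-type subcone. Pick $d+1$ extremal rays $x_1,\dots,x_{d+1}$ spanning $V$, choosing them so that some ray is not a vertex of the simplex cone spanned by the others. Then use a linear functional argument, via Radon's lemma, to build $\beta$ as a composition of a map $V\to\R^{d+1}_+$-type coordinates (using facet functionals of $\CC$) with a positive map onto a lower-dimensional non-classical cone.

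\textbf{Main obstacle.} I expect Step 3 to be the hard part: producing a positive projection for a general, non-pyramidal polytope. The first thing I would try is to take $W$ spanned by four extremal rays in a $2$-dimensional face configuration of $B$, for instance a quadrilateral $2$-face, or four rays whose Radon partition is $2+2$. Then define $\beta$ directly by facet functionals $f_1,\dots,f_4\in\CC^*$ that are dual to $W$'s structure: $\beta(x)=\sum_j f_j(x)\,y_j$, with the $y_j$ extremal in $\CC'$. Finally, check $\beta\alpha=\ident$ by biorthogonality, solving for the $f_j$ in $\CC^*$ via Farkas' lemma.
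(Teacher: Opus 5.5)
Your argument is complete only for pyramids. Step~3 has to cover every non-pyramidal cone, for instance the $4$-dimensional cones over a cube or over an octahedron. As written it is a plan rather than a proof, and the constructions you sketch for it cannot work.

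Consider a map of the form $\beta(x)=\sum_j f_j(x)\,y_j$ with $f_j\in\CC^*$ and $y_j\in\CC'$, or any $\beta$ that first sends $\CC$ into some $\R_+^m$ via facet functionals. Such a $\beta$ is a sum of rank-one positive maps. Then $\ident_W=\beta\alpha=\sum_j y_j\,(f_j|_W)$, with $f_j|_W\in(\CC')^*$, would be a sum of rank-one maps in $\Pos(\CC',\CC')$. Evaluating at an extremal $x$ of $\CC'$ gives $x=\sum_j f_j(x)\,y_j$, so every $y_j$ with $f_j(x)>0$ lies on $\R_+x$. Grouping the terms by extremal rays, the resulting functionals are biorthogonal to the extremal rays of $\CC'$. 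Those rays are therefore linearly independent, so $\CC'$ is classical. Your ``biorthogonality via Farkas'' step is thus impossible exactly in the non-classical situation you need.

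Three things are missing.

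\emph{Every facet is a retract.} The pyramid hypothesis in Step~2 is unnecessary. Let $F=\CC\cap\ker f$ be any facet. Because the inequality $f\geq 0$ is irredundant, there is $u_0$ with $g(u_0)\geq 0$ for all other facet functionals $g$ but $f(u_0)<0$. Put $u=-u_0$. The map $x\mapsto x-\frac{f(x)}{f(u)}\,u$ fixes $\mathspan(F)$ and sends $\CC$ into $F$, since each other $g$ only increases.

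\emph{Every vertex figure is a retract.} Retracts dualize: $\beta\alpha=\ident$ gives $\alpha^*\beta^*=\ident$. The quotient of $\CC$ by an extremal ray $\R_+x$, which is the cone over the vertex figure, is dual to the facet $\{f\in\CC^*: f(x)=0\}$ of $\CC^*$. So it is a retract of $\CC$.

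\emph{The combinatorial input.} Facets alone do not suffice: over the octahedron every facet cone is classical, and only the vertex figures (copies of $\CC_\square$) are not. What closes the gap is that a polytope of dimension $\geq 3$ that is both simple and simplicial is a simplex. Hence a non-classical polyhedral cone of dimension $\geq 4$ has a non-classical facet or a non-classical vertex figure, and your induction from Step~1 then goes through. This is precisely the paper's proof.
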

\begin{proof}
We use the fact that any polytope of dimension $\geq 3$ that is both simple and simplicial is a simplex (see \cite[Theorem 12.19]{Bronsted12}), or equivalently that any non-classical polyhedral cone of dimension $\geq 4$ has a nonclassical facet or a nonclassical vertex figure. We now note that facets and vertex-figures of the proper polyhedral cone $\CC$ are retracts of~$\CC$ (see~\cite[Proposition 6.16]{de2020tensor}). If $\CC$ is not classical, we can therefore iteratively select non-classical facets and vertex-figures until we arrive at a non-classical cone $\CC'\subseteq \R^3$ that is a retract of $\CC$.
\end{proof}

By \cite[Example 6.15~(g)]{de2020tensor} the cone $\PSD_n(\C)$ is a retract of $\PSD_{2n}(\R)$. In a similar way, it can be shown that $\PSD_n(\quat)$ is a retract of $\PSD_{2n}(\C)$, and hence of $\PSD_{4n}(\R)$. The cone $\PSD_3(\Oct)$ is \emph{not} a retract of $\PSD_{n}(\R)$ for any $n\in\N$. This can be proved using standard techniques from the theory of Jordan algebras and the fact that the Jordan algebra of Hermitian $3\times 3$ matrices over the octonions, known as the Albert algebra, is exceptional (see for example~\cite[Corollary 2.8.5]{hanche1984jordan}).

\subsection{Positive maps and Lorentz-factorizations} Let $\CC_1\subseteq V_1$ and $\CC_2\subseteq V_2$ denote proper cones. We say that a linear map $T:V_1\ra V_2$ is \emph{$(\CC_1,\CC_2)$-positive} if $T(\CC_1)\subseteq \CC_2$. We will simply call a map \emph{positive} if it is clear from context which cones are involved. The set $\Pos(\CC_1,\CC_2)$ of all $(\CC_1,\CC_2)$-positive maps forms a proper cone inside the vector space of linear maps from $V_1$ to $V_2$. We make repeated use of the following lemma, which appears for example in \cite[Corollary 4.5]{HFP76}.

\begin{lem} \label{lemma:rank-not-2}
Let $\CC_1$, $\CC_2$ be proper cones and $T$ be extremal in $\Pos(\CC_1,\CC_2)$. Then $\rank (T) = 1$ or $\rank (T) \geq 3$. 
\end{lem}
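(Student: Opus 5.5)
We argue by contradiction. Let $T$ be extremal in $\Pos(\CC_1,\CC_2)$ with $\rank(T)=2$, and write $W=T(V_1)$, a $2$-dimensional subspace of $V_2$. The goal is to split $T$ as a sum of two positive maps that are not multiples of $T$. The plan exploits the fact that every $2$-dimensional proper cone is isomorphic to $\R_+^2$, so that cones in $W$ are very rigid.

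\emph{Step 1.} Put $K=T(\CC_1)$, a convex cone in $W$, and $D=\CC_2\cap W$, a closed pointed cone in $W$ with $K\subseteq D$. Since $T$ maps onto $W$ and $\CC_1$ is generating, $K$ spans $W$. Hence $D$ is a proper $2$-dimensional cone in $W$. Choose a basis $e_1,e_2$ of $W$ along its two extremal rays, so that $D=\R_+e_1+\R_+e_2$.

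\emph{Step 2.} Let $f_1,f_2\in V_1^*$ be the coordinate functionals of $T$ in this basis, i.e.\ $T(x)=f_1(x)e_1+f_2(x)e_2$. Since $T(\CC_1)\subseteq D$, both $f_1$ and $f_2$ belong to $\CC_1^*$. Therefore $T=T_1+T_2$ with $T_i=e_i\otimes f_i$. Each $T_i$ is positive, because $e_i\in\CC_2$ and $f_i\in\CC_1^*$.

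\emph{Step 3.} Each $T_i$ has rank $1$, since $f_i\neq 0$ (otherwise $\rank(T)\leq 1$). So neither $T_i$ is a multiple of $T$. This contradicts the extremality of $T$, because in an extremal ray every summand of $T$ must be a nonnegative multiple of $T$. Hence $\rank(T)\neq 2$, and $\rank(T)=1$ or $\rank(T)\geq 3$ whenever $T\neq 0$.

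The only point needing care is Step 1, namely that $D$ is genuinely a proper $2$-dimensional cone. Closedness and pointedness are inherited from $\CC_2$, and full dimension in $W$ follows from $K\subseteq D$ with $K$ spanning $W$. Once this holds, the classification of $2$-dimensional proper cones as simplicial makes the rest immediate.
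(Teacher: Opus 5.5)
Your proof is correct. Note that the paper gives no argument for this lemma: it only cites \cite[Corollary 4.5]{HFP76}. So you are supplying a self-contained proof rather than reproducing one.

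The key observation is sound. $D=\CC_2\cap\range(T)$ is closed and pointed because it sits inside $\CC_2$. It is generating in $\range(T)$ because it contains $T(\CC_1)$, and $T(\CC_1)-T(\CC_1)=T(V_1)$ since $\CC_1$ is generating. Hence $D$ is a proper cone in a $2$-dimensional space, and therefore simplicial. Reading off the coordinates of $T$ along the two extremal rays of $D$ then gives $T=e_1f_1+e_2f_2$ with $f_i\in\CC_1^*$. These are two nonzero rank-one positive maps, neither proportional to $T$, which contradicts extremality.

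What your route buys over the citation is transparency and a slightly more general statement. The same computation shows that whenever $\CC_2\cap\range(T)$ is a classical cone, $T$ is a sum of rank-one positive maps, i.e.\ entanglement-breaking. Such a $T$ can therefore be extremal only if it has rank $1$, and rank $2$ is simply the case where classicality is automatic. The citation, by contrast, just keeps the paper short.

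Your caveat about $T\neq 0$ is harmless. Under the paper's definition, $0$ does not generate an extremal ray, so it is not extremal to begin with.
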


For proper cones $\CC_1$ and $\CC_2$ we define the cone of \emph{Lorentz factorizable maps} as 
\[ \LorFact(\CC_1,\CC_2) = \conv \{\beta \alpha \st \alpha \in \Pos(\CC_1,\L), \ \beta \in \Pos(\L,\CC_2), \ \L \textnormal{ a Lorentzian cone} \} \]
It has been shown in~\cite{LPMH25} that $\LorFact(\CC_1,\CC_2) \subseteq \mathcal{L}(V_1,V_2)$ is a proper cone. Every Lorentz factorizable map is obviously positive. We will need the following properties of Lorentzian cones:

\newpage

\begin{lem}\label{lem:quotientLorentzCone}
Let $\L\subseteq V$ denote a Lorentzian cone, $W\subseteq V$ a subspace, and $\pi:V\ra V/W$ the canonical quotient map. We have the following:
\begin{enumerate}
\item If $W\cap \inter(\L)\neq \emptyset$, then $W \cap \L$ is Lorentzian.
\item If $W\cap \L=\lset 0\rset$, then $\pi(\L)$ is Lorentzian.
\end{enumerate}
\end{lem}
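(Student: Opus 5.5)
Both parts rest on the quadratic-form description of Lorentzian cones from the introduction. Choose a quadratic form $q$ on $V$ of signature $(1,\dim V-1)$ with $\L\cup(-\L)=\{x\in V \st q(x)\geq 0\}$. Pick $e\in\inter(\L)$, so $q(e)>0$, and write $V=\R e\oplus e^{\perp_q}$. On $e^{\perp_q}$ the form $q$ is negative definite, and $\L=\{te+u \st t\geq 0,\ t^2q(e)\geq -q(u)\}$. Each part amounts to checking that restricting, or passing to the quotient, preserves the signature pattern and the relation between $\L\cup(-\L)$ and $\{q\geq 0\}$. Properness of the resulting cone also needs to be verified.

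\textbf{Part (1).} Assume $W\cap\inter(\L)\neq\emptyset$ and pick $e\in W\cap\inter(\L)$. Then $q|_W$ satisfies $q(e)>0$, and on $e^{\perp_q}\cap W$ it is negative definite, since it is a restriction of a negative definite form. Hence $q|_W$ has signature $(1,\dim W-1)$. Moreover
\[
(W\cap\L)\cup(-(W\cap\L)) = W\cap(\L\cup -\L) = \{x\in W \st q(x)\geq 0\}.
\]
Next, $W\cap\L$ is closed and pointed because $\L$ is. It is generating in $W$ because it contains the point $e$, which lies in the interior of $\L$ and hence in the relative interior of $W\cap\L$ in $W$. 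By the characterization in the introduction, $W\cap\L$ is Lorentzian. (In the degenerate case $\dim W=1$ this gives $\R_+$, which is $\L_1$.)

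\textbf{Part (2).} Assume $W\cap\L=\{0\}$. I would argue by duality. Since $\L$ is closed and $W\cap \L=\{0\}$, the cone $\pi(\L)$ is closed, and it is pointed because $W\cap\L=\{0\}$. It is generating because $\pi$ is surjective. Its dual satisfies
\[
\pi(\L)^*=\{f\in (V/W)^* \st f\circ\pi\in\L^*\}\cong \L^*\cap W^{\perp},
\]
where $W^\perp\subseteq V^*$ is the annihilator of $W$. The dual of a Lorentzian cone is Lorentzian: it is the cone associated with the dual quadratic form $q^*$, since $\L_n$ is self-dual under the standard identification. I then need $W^\perp\cap\inter(\L^*)\neq\emptyset$. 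This follows from a standard separation argument: because $W\cap\L=\{0\}$ and $\L$ has a compact base, there is a functional vanishing on $W$ and strictly positive on $\L\setminus\{0\}$, and such a functional lies in $\inter(\L^*)\cap W^\perp$. Applying part (1) to $\L^*$ and the subspace $W^\perp$ shows that $\pi(\L)^*$ is Lorentzian. Dualizing once more, $\pi(\L)$ is Lorentzian.

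The step that needs the most care is the separation argument together with the closedness of $\pi(\L)$. Both follow from compactness of a base of $\L$: a sequence in $\pi(\L)$ lifts to $\L$, and if the lifts were unbounded then after rescaling they would produce a nonzero point of $W\cap\L$, which is excluded.
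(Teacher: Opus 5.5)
Your argument is correct, but it takes a different route from the paper in both parts.

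For (1), the paper simply cites O'Neill. You give a self-contained proof instead: $q(e)>0$ for $e\in W\cap\inter(\L)$, and $q$ is negative definite on $e^{\perp_q}\cap W$, so $q|_W$ has signature $(1,\dim W-1)$. Together with properness of $W\cap\L$ and the quadratic-form characterization of Lorentzian cones, this gives the result.

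For (2), the paper never leaves $V$. Since $W\cap\L=\{0\}$, $q|_W$ is negative definite, so $V=W\oplus W^{\perp}$ with $W^\perp$ the $q$-orthogonal complement. From $q(x+y)=q(x)+q(y)\leq q(y)$ for $x\in W$, $y\in W^\perp$, one reads off that projecting along $W$ maps $\L$ onto $W^\perp\cap\L$. Part (1) then applies, because by counting signatures $q|_{W^\perp}$ must contain the positive direction.

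You instead identify $\pi(\L)^*$ with the section of $\L^*$ by the annihilator of $W$, apply (1) in $V^*$, and dualize back. This is the general principle that quotients are dual to sections, and it works for any class of cones stable under duality and sections. The price is that you must establish three facts the paper gets for free:
\begin{enumerate}
\item closedness of $\pi(\L)$, needed for the bipolar step;
\item a functional vanishing on $W$ and strictly positive on $\L\setminus\{0\}$, needed for the interior hypothesis of (1);
\item that the dual of a Lorentzian cone is Lorentzian.
\end{enumerate}
Your compact-base arguments for the first two are sound.

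If you identify $V^*$ with $V$ via the bilinear form of $q$, the annihilator of $W$ becomes the $q$-orthogonal complement and $\L^*$ becomes $\L$. So both proofs land on the same cone $W^\perp\cap\L$: the paper uses it as a model of $\pi(\L)$, and you use it as a model of $\pi(\L)^*$.
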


\begin{proof}
The first statement follows immediately from \cite[\S 5, Lemma 27]{o1983semi}. To prove the second statement, consider a quadratic form $q$ of signature $(1,\dim(V)-1)$ such that $q(x) \geq 0$ if and only if $x \in \pm \L \coloneqq \L \cup (-\L)$. Let $g$ be the associated bilinear form. Define
\[
W^\perp = \lset v\in V~:~g(v,w)=0 \text{ for all }w\in W\rset .
\]
Since $W\cap \L=\lset 0\rset$, the restriction $q|_{W}$ is negative definite and in particular non-degenerate. 
By \cite[\S 2, Lemma 23]{o1983semi}, this implies that $V=W\oplus W^{\perp}$. Let $\alpha : V/W \to W^\perp$ the associated isomorphism. A vector $y \in W^\perp$ belongs to $\pm\alpha(\pi(\L))$ if and only if there exists $x \in W$ such that $q(x+y) \geq 0$, or equivalently $q(x)+q(y) \geq 0$. Since $q \leq 0$ on $W$, we conclude that $\pm\alpha(\pi(\L))=W^\perp \cap \pm\L$ and thus $\alpha(\pi(\L))=W^\perp \cap \L$. Since $W^{\perp}\cap \inter(\L)\neq \emptyset$, the first part of the lemma implies that $W^\perp \cap \L$ is Lorentzian, hence $\pi(\L)$ is also Lorentzian. 
\end{proof}

The following lemma will be useful when studying Lorentz-factorizable maps:

\begin{lem} \label{lemma:lorfact:inj-surj}
Let $\CC_1$, $\CC_2$ be proper cones, $\L$ be a Lorentzian cone and $\alpha \in \Pos(\CC_1,\L)$, $\beta \in \Pos(\L,\CC_2)$ be positive linear maps. There exists a Lorentzian cone $\L'$, a surjective linear map $\alpha' \in \Pos(\CC_1,\L')$ and an injective linear map $\beta' \in \Pos(\L',\CC_2)$ such that $\beta'\alpha'=\beta\alpha$.
\end{lem}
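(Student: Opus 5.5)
The idea is to make $\alpha$ surjective by shrinking the Lorentz cone to the range of $\alpha$, and then to make $\beta$ injective by passing to a quotient. Both steps are handled by Lemma~\ref{lem:quotientLorentzCone}, with some care in the degenerate cases.

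\textbf{Surjectivity.} Let $\L \subseteq V$ and set $R = \range(\alpha)$. Since $\CC_1$ is generating, $R = \mathspan(\alpha(\CC_1))$, and $\alpha(\CC_1) \subseteq \L \cap R$.
\begin{itemize}
\item If $R \cap \inter(\L) \neq \emptyset$, then $\L \cap R$ is Lorentzian by Lemma~\ref{lem:quotientLorentzCone}(1). We replace $\L$ by $\L\cap R \subseteq R$, $\alpha$ by its corestriction and $\beta$ by $\beta|_R$. Positivity is preserved and $\alpha$ becomes surjective.
\item If $R \cap \inter(\L) = \emptyset$, we pick a point $x$ in the relative interior of $\alpha(\CC_1)$. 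Then $x$ lies in the boundary of $\L$, so $x$ spans an extremal ray, since $\L$ is strictly convex. Because $R = \mathspan(\alpha(\CC_1))$, we get $\alpha(\CC_1) \subseteq \face(x) = \R^+ x$. So $\alpha$ has rank at most one and maps $\CC_1$ into a ray.
\end{itemize}
In the second case $\beta\alpha = \beta(x)\otimes f$ for some $f \in \CC_1^*$, with $\beta(x) \in \CC_2$. A rank-one map factors through the $1$-dimensional cone $\R_+$, which is isomorphic to $\L_1$. This gives the factorization $f : V_1 \to \R$ followed by $t \mapsto t\beta(x)$. Here $f$ is surjective unless it is zero, and $t\mapsto t\beta(x)$ is injective unless $\beta(x)=0$. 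The remaining degenerate case $\beta\alpha=0$ uses the trivial cone $\L_0=\{0\}$, or we allow it by convention if $\L_n$ with $n\geq 1$ is required. The paper defines $\L_n$ for $n\in\N$, so I would treat $\{0\}$ as allowed.

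\textbf{Injectivity.} Now assume $\alpha$ is surjective and let $K = \ker(\beta)$. We claim $K \cap \L = \{0\}$.
\begin{itemize}
\item Suppose not. If $K$ contains a point of $\inter(\L)$, then every $y\in V$ is a difference of positive multiples of elements near that point. Hence $\beta=0$, and that case is handled as above.
\item Otherwise $K\cap\L$ is a nonzero subset of the boundary. Take $0\neq k \in K\cap \L$, so $\R^+k$ is an extremal ray with $\beta(k)=0$.
\end{itemize}
In the second case we cannot quotient directly, so we restrict instead. Since $\beta$ is positive, for a supporting functional $\varphi\in\L^*$ with $\varphi(k)=0$ nothing forces $\beta$ to factor. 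The plan is to use $\beta$'s positivity: for any $y\in \inter(\L)$ and $t$ real, $y+tk$ lies in $\L$ for small $|t|$. Indeed $\beta(y+tk)=\beta(y)$ regardless, so there is no constraint. Instead we pass to the subspace $k^{\perp_g}$, the $g$-orthogonal complement of $k$, which is the tangent hyperplane and meets $\L$ only in $\R^+ k$. This gets messy.

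\textbf{Induction on dimension.} A cleaner route is induction on $\dim V$. Among all factorizations of $\beta\alpha$, choose one with $\dim V$ minimal. After the first step $\alpha$ is surjective; otherwise restricting gives a smaller $V$, or we fall into the rank-one case. If $K\cap \L=\{0\}$ and $K\neq0$, Lemma~\ref{lem:quotientLorentzCone}(2) makes $\pi(\L)$ Lorentzian in $V/K$. The maps $\pi\alpha$ and $\bar\beta$ are positive and give a smaller factorization, which contradicts minimality; so $K=0$.

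\textbf{The obstacle: $K\cap\L \neq\{0\}$ on the boundary.} This is the main difficulty. I expect to show it forces low rank. Suppose $\beta(k)=0$ with $k$ on the boundary, and let $y\in\inter(\L)$. Both $y\pm sk$ and the isotropic rays near $k$ lie in $\L$, and their images are positive. Using that $\L$ is a cone over a ball, every $z\in\L$ can be written as $z=\lambda k + w$ with $w\in\L$ in a complementary direction, up to limits.

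What I would try first is to show that $\beta$ factors through the quotient $V/\R k$ restricted to the image of the tangent hyperplane. Alternatively, and more simply, restrict $\alpha$'s target. If $\alpha(\CC_1)$ avoids a neighbourhood of $\R^+k$, one can replace $\L$ by a smaller Lorentz cone $\L''\subseteq\L$ that contains $\alpha(\CC_1)$ and satisfies $\L''\cap K=\{0\}$. Such an $\L''$ exists because $\alpha(\CC_1)\setminus\{0\}$ is a compact-based cone inside $\inter(\L)\cup\{$boundary rays$\}$, and a slightly narrower ellipsoidal cone can avoid the finitely many kernel directions. If $\alpha(\CC_1)$ touches $K\cap\L$, those rays are killed by $\beta$ anyway. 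In that case we perturb and use closedness, which is the delicate point I would have to check carefully.
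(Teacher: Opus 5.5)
Your surjectivity step is correct: corestrict $\alpha$ to $\L\cap\range(\alpha)$, noting that if $\range(\alpha)$ misses $\inter(\L)$ then $\alpha(\CC_1)$ lies in a single ray. The quotient step when $\ker\beta\cap\L=\{0\}$ is also correct. The gap is that you never establish the claim the whole lemma rests on: $\ker\beta$ meets $\L$ only at $0$ once $\rank(\beta\alpha)\geq 2$. You guess that a nonzero boundary point $k\in\ker\beta\cap\L$ ``forces low rank'', but none of your arguments proves it.
\begin{itemize}
\item The minimal-dimension induction gives no contradiction in exactly this case, because Lemma~\ref{lem:quotientLorentzCone}(2) is then unavailable.
\item The restriction to the tangent hyperplane is abandoned.
\item Replacing $\L$ by a narrower Lorentz cone avoiding $\R^+k$, followed by ``perturb and use closedness'', is unjustified precisely when $\alpha(\CC_1)$ touches $\R^+k$. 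You flag this yourself as the delicate point.
\end{itemize}
As written, the injectivity half of the lemma is not proved.

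The missing idea is the smoothness of Lorentz cones. At a nonzero boundary point $k$ of a Lorentzian cone there is, up to scaling, a unique functional in the dual cone vanishing at $k$: a positive multiple of $g(k,\cdot)$, where $g$ is the bilinear form of the defining quadratic form. You were one step away when you wrote down a supporting functional with $\varphi(k)=0$.

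Set $E=\range(\alpha)$, $\L_0=\L\cap E$ and $\beta_0=\beta|_E$, and suppose $0\neq k\in\ker\beta_0\cap\L_0$.
\begin{itemize}
\item For every $\psi\in\CC_2^*$, the functional $\psi\circ\beta_0$ lies in $\L_0^*$ by positivity of $\beta$ and vanishes at $k$. Hence all these functionals lie on one ray.
\item Since $\CC_2^*$ spans $V_2^*$, the range of $\beta_0^*$ is at most one-dimensional.
\item Since the corestricted $\alpha$ is surjective onto $E$, this gives $\rank(\beta\alpha)=\rank(\beta_0)\leq 1$.
\end{itemize}
So if $\rank(\beta\alpha)\geq 2$, then $\ker\beta_0\cap\L_0=\{0\}$ automatically. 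A single application of Lemma~\ref{lem:quotientLorentzCone}(2) to $E/\ker\beta_0$ then finishes the proof, with no induction or perturbation. This is exactly the paper's argument.

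The remaining case $\rank(\beta\alpha)\leq 1$ is the trivial one you already handle. A rank-one positive map has the form $f(\cdot)\,y$ with $f\in\CC_1^*$ and $y\in\CC_2$, so it factors through $\R_+$.
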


\begin{proof}
Let $P=\beta\alpha$, and note that the statement is clear if $\rank(P)=1$. In the following, we assume that $\rank(P)\geq 2$. Let $E$ be the range of $\alpha$. By Lemma~\ref{lem:quotientLorentzCone}, the cone $\L_0\coloneq \L \cap E$ is Lorentzian. Now, define $\alpha_0\in \Pos(\CC_1,\L_0)$ by formally restricting the codomain of $\alpha$ and let $\beta_0 \in \Pos(\L_0,\CC_2)$ be $\beta|_{E}$. Then, we have $P=\beta_0\alpha_0$ and $\alpha_0$ is surjective. 

Let $F\subseteq E$ denote the kernel of $\beta_0$. We claim that $F \cap \L_0 = \{0\}$. Assume by contradiction that $F \cap \L_0$ contains a nonzero element $x$. We necessarily have $x \in \partial \L_0$. Since $\rank \beta_0 \geq 2$, there exist $\varphi_1,\varphi_2 \in \CC_2^*$ such that $\varphi_1\beta_0$ and $\varphi_2\beta_0$ are non-collinear. This is a contradiction since there is up to scaling a unique element in $\L_0^*$ which vanishes on $x$.

Let $\pi:E\to E/F$ denote the canonical quotient map.
By Lemma \ref{lem:quotientLorentzCone}, the cone $\L'\coloneq\pi(\L_0)$ is Lorentzian. Finally, we set $\alpha'=\pi \alpha_0\in \Pos(\CC_1,\L')$ and $\beta'\in \Pos(\L',\CC_2)$ by setting $\beta'([x]_{F})=\beta_0(x)$. Note that $\beta'$ is well-defined and injective, and $\alpha'$ is surjective as a composition of surjective maps. It is easy to verify that $P=\beta'\alpha'$, finishing the proof.
\end{proof}

\subsection{The square-base cone}

The simplest example of a non-classical polyhedral cone is the \emph{square-based cone} given by
\[ \CC_{\square} = \{ (x,y,z) \in \R^3 \st |y|+|z| \leq x \} \]
In the early article \cite{namioka1969tensor} it was shown that a proper cone $\CC$ is non-classical if and only if $\Pos(\CC_\square,\CC)$ contains an extremal element of rank $>1$.
The cone $\CC_{\square}$ has four extremal rays which are generated by the following vectors
\[ v_{+0} = (1,1,0), \ v_{-0} = (1,-1,0), \ v_{0+} = (1,0,1), \ v_{0-} = (1,0,-1) \]
These vectors satisfy the relation $v_{+0}+v_{-0}=v_{0+}+v_{0-}$. Suppose that $V$ is a vector space containing elements $x_{+0}$, $x_{-0}$, $x_{0+}$, $x_{0-}$ such that $x_{+0}+x_{-0}=x_{0+}+x_{0-}$. We denote by 
\[ \Sqmap{x_{+0}}{x_{0+}}{x_{0-}}{x_{-0}} \]
the linear map $T : \R^3 \to V$ satisfying the conditions
\begin{equation} \label{eq:map-from-square} T(v_{+0})=x_{+0},\ T(v_{-0})=x_{-0},\ T(v_{0+})=x_{0+},\ T(v_{0-})=x_{0-} 
\end{equation}

We have the following:

\begin{lem} \label{lemma:rescaling}
Let $x_1,x_2,x_3,x_4$ be nonzero elements of a proper cone $\CC$ in convex position, i.e., for every $i\in\lset 1,2,3,4\rset$ we have $x_i \not \in \cone \{x_j \,:\, j\neq i\}$. If $\mathspan\lset x_1,x_2,x_3,x_4\rset$ is $3$-dimensional, then there exist $\lambda_i>0$ such that one of the following equations holds
\begin{gather*}
\lambda_1 x_1 + \lambda_2 x_2 = \lambda_3 x_3 + \lambda_4 x_4, \\
\lambda_1 x_1 + \lambda_3 x_3 = \lambda_2 x_2 + \lambda_4 x_4, \\
\lambda_1 x_1 + \lambda_4 x_4 = \lambda_2 x_2 + \lambda_3 x_3.
\end{gather*}
In particular, any $3$-dimensional proper cone with exactly four extremal rays is isomorphic to $\CC_\square$.
\end{lem}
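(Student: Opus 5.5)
Four vectors in a $3$-dimensional space satisfy exactly one linear relation up to scaling, and the plan is to read the conclusion off the signs of its coefficients. Since $\mathspan\{x_1,x_2,x_3,x_4\}$ has dimension $3$, there is a nonzero vector $(\mu_1,\mu_2,\mu_3,\mu_4)\in\R^4$, unique up to a scalar, such that $\sum_i \mu_i x_i=0$. The goal is to show that exactly two of the $\mu_i$ are positive and two are negative. Then moving the negative terms to the right-hand side gives one of the three displayed equations with $\lambda_i=|\mu_i|>0$.

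First, I exclude vanishing coefficients. Suppose $\mu_4=0$. Then $x_1,x_2,x_3$ are linearly dependent, so they span a subspace of dimension at most $2$. Because the total span is $3$-dimensional, that subspace has dimension exactly $2$. The cone $\CC$ is pointed, so $\CC$ intersected with this plane is a pointed $2$-dimensional cone, which has only two extremal rays. Hence one of $x_1,x_2,x_3$ lies in the cone generated by the other two (these vectors are nonzero, and collinear vectors count as being in each other's cone). This contradicts convex position, and the same argument works for every index.

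Next, I exclude sign patterns other than two plus and two minus. If all $\mu_i$ had the same sign, say all positive, then $-\mu_1x_1=\sum_{i\geq 2}\mu_ix_i$ would lie in $\CC\cap(-\CC)=\{0\}$, contradicting $x_1\neq 0$. If exactly one coefficient had a different sign from the other three, say $\mu_1>0$ and $\mu_2,\mu_3,\mu_4<0$, then $x_1=\sum_{i\geq 2}(|\mu_i|/\mu_1)x_i$ would lie in $\cone\{x_2,x_3,x_4\}$, again contradicting convex position. So the only remaining pattern is two positive and two negative coefficients, which is the claim. The main points to get right are the case of a zero coefficient, where pointedness and the planar argument are needed, and the one-against-three case.

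For the final assertion, let $\CC$ be a $3$-dimensional proper cone with exactly four extremal rays, and pick generators $x_1,\dots,x_4$ of these rays. Each generator is extremal, so none lies in the cone generated by the others; hence they are in convex position. Their span is all of $\R^3$ because $\CC$ is generating. After relabeling and rescaling each $x_i$ by its $\lambda_i$, we get $x_{+0}+x_{-0}=x_{0+}+x_{0-}$. The map $T$ sending $v_{+0},v_{-0},v_{0+},v_{0-}$ to these four vectors is then well-defined, because the $v$'s satisfy exactly the same single relation. It is surjective, hence bijective between $3$-dimensional spaces. It maps the extremal rays of $\CC_\square$ onto those of $\CC$, and since both cones are generated by their extremal rays, $T(\CC_\square)=\CC$, so $\CC$ is isomorphic to $\CC_\square$.
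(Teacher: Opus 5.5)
Your proof is correct and follows the paper's approach: take the linear relation among the four vectors and use pointedness and convex position to show it has exactly two positive and two negative coefficients. The paper asserts this sign count in one line, and you justify it in detail, including the ``in particular'' clause. Your separate planar argument for a vanishing coefficient is valid but not needed. Your sign argument already applies to relations with zero coefficients: nonzero coefficients all of one sign contradict pointedness, and exactly one of opposite sign contradicts convex position. This forces at least two coefficients of each sign, so none can vanish.
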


\begin{proof} By assumption, there are $\alpha_1,\alpha_2,\alpha_3,\alpha_4\in \R$ not all zero, such that
\[
\alpha_1 x_1 + \alpha_2 x_2 + \alpha_3 x_3 + \alpha_4 x_4 = 0
\]
As $x_1,x_2,x_3,x_4$ are in convex position, at least two of the coefficients $\alpha_1,\alpha_2,\alpha_3,\alpha_4$ are strictly positive and at least two are strictly negative. Then, the previous relation is equivalent to one of the relations stated in the lemma.
\end{proof}

We will need the following criterion for whether a map in $\Pos(\CC_\square,\CC)$ is extremal. Similar criteria can be found in~\cite{banacki2021edge}, and an equivalent characterization of extremal maps in $\Pos(\CC_\square,\CC)$ has been shown in~\cite[Proposition 5]{jenvcova2018incompatible}.  

\begin{prop} \label{prop:extremality-conditions}
Let $\CC \subseteq V$ be a proper cone and $x_1,x_2,x_3,x_4 \in \CC$ such that $x_1+x_4=x_2+x_3$. The map $T=\sqmap{x_1}{x_2}{x_3}{x_4}$ is extremal in $\Pos(\CC_\square,\CC)$ if and only if one of the following occurs.
\begin{enumerate}
\item[\textup{(i)}] The map $T$ has rank $1$, and there exists an element $y$ extremal in $\CC$ such that
\[ T = \Sqmap{y}{y}{0}{0} \textnormal{ or } T = \Sqmap{y}{0}{y}{0} \textnormal{ or } T = \Sqmap{0}{0}{y}{y} \textnormal{ or } T = \Sqmap{0}{y}{0}{y}\]
\item[\textup{(ii)}] The map $T$ has rank $3$, and the linear subspaces $E_i \coloneqq \mathspan(\face(x_i))$ satisfy the following relations
\begin{gather*}
E_i \cap E_j =\{0\} \textnormal{ for every } 1 \leq i <j \leq 4\\
(E_1+E_4) \cap (E_2 + E_3)  = \R^+ (x_1+x_4)
\end{gather*}
\end{enumerate}
\end{prop}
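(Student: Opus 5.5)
The proof has three parts. We first treat the rank, using Lemma~\ref{lemma:rank-not-2}: any extremal $T$ has rank $1$ or rank $3$, since $\dim \R^3=3$. The rank-$1$ case and the rank-$3$ case are then handled separately, each in both directions, by decomposing $T$ into sums of positive maps.

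\textbf{Rank $1$.} Write $T=y\otimes\varphi$ with $\varphi\in\CC_\square^*$ and $y\in\CC$. Then $T$ is extremal if and only if $y$ is extremal in $\CC$ and $\varphi$ is extremal in $\CC_\square^*$. The extremal rays of $\CC_\square^*$ are the four facet functionals of the square. Each vanishes on exactly two adjacent vertices and takes the value $1$ (after scaling) on the other two. This yields exactly the four displayed patterns. Conversely, a map of one of these shapes has rank one, with $y$ extremal and $\varphi$ extremal, so it is extremal.

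\textbf{Rank $3$, necessity.} Suppose $T$ is extremal of rank $3$. We show the two conditions on the spaces $E_i$.

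For the first condition, suppose some $E_i\cap E_j$ contains $z\neq0$. Since $x_i$ and $x_j$ lie in the relative interiors of their faces, we have $x_i\pm\e z\in\face(x_i)$ and $x_j\pm\e z\in\face(x_j)$ for small $\e>0$. We perturb $T$ by $\pm\e R$, where $R$ is a nonzero map supported on $z$ and compatible with the relation $x_1+x_4=x_2+x_3$. For adjacent $i,j$ (i.e.\ $\{i,j\}\neq\{1,4\},\{2,3\}$) we take $R$ to be the rank-one map whose pattern sends the two vertices $i,j$ to $z$ and the other two to $0$. For opposite vertices we instead use $R=\sqmap{z}{0}{0}{-z}$ with the appropriate signs; this respects $x_1+x_4=x_2+x_3$ only if we split differently. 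So in the opposite case we use a map sending $x_1\mapsto z$, $x_4\mapsto -z$, $x_2,x_3\mapsto 0$. This is linear because $z-z=0$.

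In either case $T\pm\e R$ stays positive, and $T=\tfrac12\big((T+\e R)+(T-\e R)\big)$. Since $R$ has rank $1$ and $T$ has rank $3$, $R$ is not proportional to $T$, which contradicts extremality.

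For the second condition, take $w\in(E_1+E_4)\cap(E_2+E_3)$ not in $\R(x_1+x_4)$. Write $w=a_1+a_4=a_2+a_3$ with $a_i\in E_i$. Then $R=\sqmap{a_1}{a_2}{a_3}{a_4}$ is a well-defined linear map, and $T\pm\e R$ is positive for small $\e$. We must check that $R$ is not a multiple of $T$. If $R=cT$, then $w=c(x_1+x_4)$, which we excluded. Hence $T$ is not extremal. The inclusion $\supseteq$ (i.e.\ $\R(x_1+x_4)$ lies in the intersection) is trivial, and we read ``$\R^+$'' as the line.

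\textbf{Rank $3$, sufficiency.} Suppose the conditions hold and $T=T'+T''$ with $T',T''$ positive. Then $T'(v)\le_\CC T(v)$ for each vertex $v$, so $x_i'\in\face(x_i)\subseteq E_i$. The vector $x_1'+x_4'=x_2'+x_3'$ lies in $(E_1+E_4)\cap(E_2+E_3)$, hence equals $\lambda(x_1+x_4)$ for some $\lambda$. Writing this as $(x_1'-\lambda x_1)+(x_4'-\lambda x_4)=0$ with the two terms in $E_1$ and $E_4$, the condition $E_1\cap E_4=\{0\}$ gives $x_1'=\lambda x_1$ and $x_4'=\lambda x_4$. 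Likewise $x_2'=\lambda x_2$ and $x_3'=\lambda x_3$. Therefore $T'=\lambda T$, and $T$ is extremal.

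\textbf{Main obstacle.} The delicate step is the construction of the perturbations in the necessity direction. One must make sure each perturbation $R$ is a genuine linear map, i.e.\ satisfies the square relation, and that it is non-proportional to $T$. The rank gap between $R$ and $T$ handles the first condition; the choice of $w$ outside the line handles the second.
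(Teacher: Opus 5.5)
Your proof is correct and follows the paper's own argument step for step. Both proofs use Lemma~\ref{lemma:rank-not-2} to restrict to ranks $1$ and $3$, and read off the rank-one case from the four extremal rays of $\CC_\square^*$. For necessity in rank $3$ you use the same perturbations $\sqmap{z}{z}{0}{0}$, $\sqmap{z}{0}{0}{-z}$ and $\sqmap{a_1}{a_2}{a_3}{a_4}$, and for sufficiency the same $\lambda$-argument based on $E_1\cap E_4=E_2\cap E_3=\{0\}$. Your sentence about the opposite-vertex case respecting the relation ``only if we split differently'' is garbled. Still, the map you end up using is well defined and is exactly the paper's, and reading $\R^+(x_1+x_4)$ as the line $\R(x_1+x_4)$ is the intended meaning.
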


The proof uses the following elementary observation. Consider elements  $x \in \CC$ and $y \in V$. Then $y \in \mathspan (\face (x))$ if and only if $x+\e y \in \CC$ and $x-\e y \in \CC$ for sufficiently small real numbers $\e>0$.

\begin{proof}
By Lemma \ref{lemma:rank-not-2}, the rank of an extremal map is either $1$ or $3$. The characterization of extremal maps with rank $1$ follows from the form of the four extremal rays in the dual cone $\CC^*_\square$ and it yields the condition in case (i). 

Suppose now that $\rank (T) =3$. We will show that the failure of any of the conditions in (ii) imply that $T$ is not extremal. Assume $E_1 \cap E_4$ contains a nonzero element $y$. The map $S = \sqmap{y}{0}{0}{-y}$ is not proportional to $T$ since \mbox{$\rank(S)=1$}. Let $\e>0$ be small enough such that $x_1 \pm \e y$ and $x_4 \pm \e y$ belong to $\CC$. The maps $T+\e S$ and $T-\e S$ are positive and therefore $T$ is not extremal. Assume that $E_1 \cap E_2$ contains a nonzero element $z$. The map $R=\sqmap{z}{z}{0}{0}$ is not proportional to~$T$. Let $\e>0$ be small enough such that $x_1 \pm \e z$ and $x_2 \pm \e z$ belong to $\CC$. The maps $T+\e R$ and $T- \e R$ are positive and therefore $T$ is not extremal. The same arguments applied to other pairs of indices shows that $T$ is not extremal whenever $E_i \cap E_j \neq \{0\}$  for some $1 \leq i < j \leq 4$.

Set $x:=x_1+x_4=x_2+x_3$ and assume that $(E_1+E_4) \cap (E_2 + E_3)$ contains an element $w$ not proportional to $x$. There exist elements $w_i$ in $E_i$ ($1 \leq i \leq 4$) such that $w=w_1+w_4=w_2+w_3$ and the map $Q = \sqmap{w_1}{w_2}{w_3}{w_4}$ is not proportional to $T$.  Pick $\e>0$ small enough such that $x_i\pm \e w_i \in \CC$ for $1 \leq i \leq 4$. The maps $T+\e Q$ and $T-\e Q$ are positive and therefore $T$ is not extremal.

Finally, it remains to show that a positive map $T=\sqmap{x_1}{x_2}{x_3}{x_4}$ satisfying the conditions in (ii) is indeed extremal. Consider a positive map $S=\sqmap{y_1}{y_2}{y_3}{y_4}$ such that $T-S$ is positive as well. In this case, $y_i\in E_i$ for any $i\in\lset 1,2,3,4\rset$ and we conclude from (ii) that 
\[
y_1+y_4=y_2+y_3=\lambda (x_1+x_4)=\lambda (x_2+x_3) ,
\]
for some $\lambda>0$. In particular, we have $y_1-\lambda x_1 = \lambda x_4 - y_4$. Since $y_1-\lambda x_1\in E_1$ and $\lambda x_4-y_4\in E_4$ we conclude from (ii) that $y_1 = \lambda x_1$ and $y_4=\lambda x_4$. Similarly, we also find that $y_2 = \lambda x_2$ and $y_3=\lambda x_3$ and we conclude that the map $T$ is extremal.
\end{proof}

Here is a consequence of Proposition \ref{prop:extremality-conditions} which could also be proved directly.

\begin{cor} \label{cor:extremality-conditions}
Let $x_1$, $x_2$, $x_3$, $x_4$ be pairwise non-collinear, extremal in $\CC$ and such that $x_1+x_4=x_2+x_3$. Then the map $T \coloneqq \sqmap{x_1}{x_2}{x_3}{x_4}$ is extremal in $\Pos(\CC_\square,\CC)$.
\end{cor}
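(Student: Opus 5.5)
The plan is to deduce the corollary from Proposition~\ref{prop:extremality-conditions} by checking condition~(ii). Since each $x_i$ is extremal, $\face(x_i)=\R^+x_i$, so $E_i=\R x_i$. The conditions $E_i\cap E_j=\{0\}$ for $i<j$ are exactly pairwise non-collinearity.

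First step: show that $T$ has rank $3$. The rank is at most $3$ because the domain is $\R^3$. Since $x_1,x_2$ are non-collinear, the span of the $x_i$ has dimension at least $2$. By Lemma~\ref{lemma:rank-not-2}, rank $2$ only excludes extremal maps, so we cannot use it here; we must rule out rank $2$ directly.

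Suppose the span $W$ of $x_1,\dots,x_4$ is $2$-dimensional. Then $\CC\cap W$ is a $2$-dimensional proper cone, isomorphic to $\R_+^2$, with only two extremal rays. Each $x_i$, being extremal in $\CC$, is also extremal in $\CC\cap W$. So among the four pairwise non-collinear $x_i$ at most two directions can occur, which is a contradiction. Hence $\rank(T)=3$.

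Second step: verify the last condition in~(ii). Set $x=x_1+x_4=x_2+x_3$. We have $(E_1+E_4)\cap(E_2+E_3)=\mathspan(x_1,x_4)\cap\mathspan(x_2,x_3)$. Since $T$ has rank $3$, the four vectors span $\R^3$, so both planes are $2$-dimensional and distinct. Their intersection is therefore $1$-dimensional and contains $x$, hence equals $\R x$.

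Here $x\neq 0$, since $x\in\CC$ is a sum of two nonzero elements of a pointed cone. This gives the line $\R x$, whereas condition~(ii) is written with $\R^+(x_1+x_4)$. In the proof of the Proposition that condition is only used in the sense "every element is proportional to $x$", so the line is what is needed. Proposition~\ref{prop:extremality-conditions} then yields extremality of $T$.

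The only mildly delicate point is excluding rank $2$, which the argument above handles via the $2$-dimensional slice $\CC\cap W$.
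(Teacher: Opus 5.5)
Your proposal is correct and follows essentially the same route as the paper. Both verify condition (ii) of Proposition~\ref{prop:extremality-conditions} using $E_i=\R x_i$, the fact that $T$ has rank $3$, and the fact that, up to scaling, the only linear relation among the $x_i$ is $x_1+x_4=x_2+x_3$; you phrase this last point as the planes $\mathspan(x_1,x_4)$ and $\mathspan(x_2,x_3)$ meeting in the line $\R x$. You also supply two details the paper leaves implicit: the exclusion of rank $2$ via the $2$-dimensional slice $\CC\cap W$, and the remark that the condition in (ii) must be read as $\R(x_1+x_4)$ rather than $\R^+(x_1+x_4)$.
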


\begin{proof}
The hypotheses imply that $\rank(T)=3$ and $\mathspan(\face(x_i))=\mathspan(x_i)$. Moreover, if scalars $\alpha_i$ satisfy $\alpha_1 x_1 + \alpha_4 x_4 = \alpha_2 x_2 + \alpha_3 x_3$ then necessarily $\alpha_1=\alpha_2=\alpha_3=\alpha_4$, so that condition (ii) from Proposition \ref{prop:extremality-conditions} is satisfied.
\end{proof}

\section{LFP pairs and Lorentz cone sandwiching}

\subsection{General properties and techniques} The following lemma collects some operations acting on pairs of proper cones, which preserve the LFP. 

\begin{prop}\label{prop:LFP-passesTo}
Let $\CC_1$, $\CC_2$ be proper cones such that $(\CC_1,\CC_2)$ has the LFP. Then, we have the following:
\begin{enumerate}
\item The pair $(\CC^*_2,\CC^*_1)$ has the LFP.
\item If $F$ is a face of $\CC_2$, then $(\CC_1,F)$ has the LFP.
\item \label{item: retract} If $\CC'_1$ is a retract of $\CC_1$ and $\CC'_2$ is a retract of $\CC_2$, then $(\CC'_1,\CC'_2)$ has the LFP.
\item \label{item: direct_sum} If $\CC_1 = \CC'_1\oplus \CC''_1$ and $\CC_2 = \CC'_2\oplus \CC''_2$, then all pairs $(\CC'_1,\CC'_2),(\CC''_1,\CC'_2),(\CC'_1,\CC''_2)$ and $(\CC''_1,\CC''_2)$ have the LFP.
\end{enumerate}
\end{prop}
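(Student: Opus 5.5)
Each of the four items comes from transporting factorizations along positive maps. Throughout we use that $\LorFact(\CC_1,\CC_2)$ is a convex cone generated by products $\beta\alpha$ with $\alpha \in \Pos(\CC_1,\L)$ and $\beta\in\Pos(\L,\CC_2)$. Composing such a product on either side with a positive map yields another product of the same form. Concretely, if $\gamma\in\Pos(\CC_1',\CC_1)$ and $\delta\in\Pos(\CC_2,\CC_2')$, then
\[
\delta(\beta\alpha)\gamma=(\delta\beta)(\alpha\gamma),
\]
with $\alpha\gamma\in\Pos(\CC_1',\L)$ and $\delta\beta\in\Pos(\L,\CC_2')$. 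Since composition is linear, we get the inclusion $\delta\,\LorFact(\CC_1,\CC_2)\,\gamma\subseteq \LorFact(\CC_1',\CC_2')$. This observation is the main tool.

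For (1) we use duality. If $P\in\Pos(\CC_2^*,\CC_1^*)$, then its adjoint $P^*$ (identifying $V_i^{**}=V_i$) belongs to $\Pos(\CC_1,\CC_2)$. By the LFP we can write $P^*=\sum_k\beta_k\alpha_k$, and therefore $P=\sum_k\alpha_k^*\beta_k^*$. Here $\beta_k^*\in\Pos(\CC_2^*,\L^*)$ and $\alpha_k^*\in\Pos(\L^*,\CC_1^*)$. It remains to check that the dual of a Lorentzian cone is Lorentzian. This holds because $\L_n$ is self-dual under the standard inner product, so $\L^*\simeq\L_n$. We need not worry about closures, since $\LorFact$ is already closed.

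For (3), take retractions: $\alpha_1\in\Pos(\CC_1',\CC_1)$ and $\beta_1\in\Pos(\CC_1,\CC_1')$ with $\beta_1\alpha_1=\ident$, and likewise $\alpha_2\in\Pos(\CC_2',\CC_2)$ and $\beta_2\in\Pos(\CC_2,\CC_2')$ with $\beta_2\alpha_2=\ident$. Given $P\in\Pos(\CC_1',\CC_2')$, the map $\alpha_2P\beta_1$ lies in $\Pos(\CC_1,\CC_2)=\LorFact(\CC_1,\CC_2)$. Hence $P=\beta_2(\alpha_2P\beta_1)\alpha_1$ lies in $\LorFact(\CC_1',\CC_2')$ by the observation above. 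Item (4) then follows from (3), since each summand of a direct sum is a retract of it: the inclusion into the sum and the projection along the other summand are positive and compose to the identity.

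Item (2) is the only step needing care, because a face is generally not a retract. Let $F$ be a face of $\CC_2$. We regard it as a proper cone in $W=\mathspan(F)$, which it is: it is closed since faces of closed cones are closed, and it is pointed and generating in $W$. Take $P\in\Pos(\CC_1,F)$ and view it as an element of $\Pos(\CC_1,\CC_2)$ via the inclusion of $W$ into $V_2$. By the LFP we write $P=\sum_k\beta_k\alpha_k$, where each $\alpha_k$ is $\CC_1\to\L_k$ and each $\beta_k$ is $\L_k\to\CC_2$.

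The key point is the face property. For $x\in\CC_1$ we have $P(x)=\sum_k\beta_k\alpha_k(x)\in F$, and each summand lies in $\CC_2$, so each summand lies in $F$. Thus $\beta_k$ maps the cone $K_k:=\alpha_k(\CC_1)$ into $F$, and hence maps the subspace $E_k:=\mathspan(K_k)$ into $W$. We cannot simply restrict $\beta_k$, though, because $\L_k\cap E_k$ need not be Lorentzian. The fix is Lemma~\ref{lemma:lorfact:inj-surj}: it lets us replace each product $\beta_k\alpha_k$ by $\beta_k'\alpha_k'$ where $\alpha_k'$ is surjective onto the span of its Lorentzian target $\L_k'$. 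Then $\alpha_k'(\CC_1)$ spans the space of $\L_k'$, and since $\alpha_k'(\CC_1)$ is a generating convex cone it meets $\inter(\L_k')$.

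Now the face property applies pointwise. For $u\in\inter(\L_k')$ and any $y\in\L_k'$ we have $u-\e y\in\L_k'$ for small $\e>0$. Choosing $u=\alpha_k'(x)$ in the interior, we get $\beta_k'(u)\in F$ and $\beta_k'(u)-\e\beta_k'(y)\in\CC_2$. By the characterization of faces stated earlier, this forces $\beta_k'(y)\in F$. Hence $\beta_k'\in\Pos(\L_k',F)$ and $P\in\LorFact(\CC_1,F)$. The main obstacle here is precisely this surjectivity reduction, which Lemma~\ref{lemma:lorfact:inj-surj} supplies.
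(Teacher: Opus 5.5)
Your proof is correct and follows the paper's route. You use adjoints and self-duality of Lorentz cones for (1), and conjugation by the retraction maps for (3) and hence (4). For (2) you use Lemma~\ref{lemma:lorfact:inj-surj} to make each $\alpha_k$ surjective and then apply the face property. The only variation is the last step of (2): the paper shows $\beta_k$ maps into $F-F=\mathspan(F)$ and implicitly uses $\CC_2\cap\mathspan(F)=F$, whereas your interior-point argument gives $\beta_k'(\L_k')\subseteq F$ directly; both work.

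One aside in your (2) is inaccurate: for $\alpha_k\neq 0$ the cone $\L_k\cap E_k$ is always Lorentzian, since if $E_k$ missed $\inter(\L_k)$ it would lie in a supporting hyperplane and meet $\L_k$ in at most a ray. This restriction is exactly the first step of the proof of Lemma~\ref{lemma:lorfact:inj-surj}, so plain restriction would already give the surjectivity you need, and your argument is unaffected.
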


\begin{proof}
To prove (1) consider a positive map $T\in \Pos(\CC^*_2,\CC^*_1)$ and note that its adjoint $T^*\in \Pos(\CC_1,\CC_2) $ is Lorentz-factorizable. Taking the dual of this factorization, using that Lorentz cones are selfdual, produces the desired factorization for $T$. 

For (2) consider $T \in \Pos(\CC_1,F)$. Since $(\CC_1,\CC_2)$ has the LFP and using Lemma~\ref{lemma:lorfact:inj-surj}, there exist an integer $N \in \N$ and for every $1 \leq i \leq N$ a Lorentzian cone $\L^{i}$ in a vector space $V^i$, a surjective map $\alpha_i \in \Pos(\CC_1,\L^i)$ and an injective map $\beta_i \in \Pos (\L^i,\CC_2)$ such that $T = \sum^N_{i=1} \beta_i \alpha_i$. Consider an index  $i\in \{ 1,\ldots ,N\}$ and  an element $y \in V^i$. Since $\alpha_i$ is surjective and~$\CC_1$ is generating, there exists elements $x_+$, $x_-$ in $\CC_1$ such that $y=\alpha_i(x_+-x_-)$. For every $x \in \CC_1$, since $T(x)$ belongs to the face $F$, we have $\beta_i(\alpha_i(x)) \in F$. It follows that $\beta_i(y) = \beta_i(\alpha_i(x_+))-\beta_i(\alpha_i(x_-))$ belongs to $F-F$. This proves that the range of $\beta_i$ lies in $\mathspan (F)$. We may thus consider $\beta_i$ as an element of $\Pos(L^i,F)$. It follows that $T \in \LorFact(\CC_1,F)$.

For (3) let $\alpha_1\in \Pos(\CC'_1,\CC_1)$, $\beta_1\in \Pos(\CC_1,\CC'_1)$ and $\alpha_2\in \Pos(\CC'_2,\CC_2)$, $\beta_2\in \Pos(\CC_2,\CC'_2)$ be such that $\ident_{V'_1}=\beta_1\alpha_1$ and $\ident_{V'_2}=\beta_2\alpha_2$. For any $T'\in \Pos(\CC'_1,\CC'_2)$ we can consider $T = \alpha_2 T'\beta_1\in \Pos(\CC_1,\CC_2)$. If $(\CC_1,\CC_2)$ has the LFP, then $T$ factors through Lorentz cones and the same holds for $T'=\beta_2 T \alpha_1$.

Statement (4) follows from the previous statement and the fact that $\CC_1'$ and $\CC_1''$ are retracts of $\CC_1'\oplus \CC_1''$, and that $\CC_2'$ and $\CC_2''$ are retracts of $\CC_2'\oplus \CC_2''$.
\end{proof}

Next, we prove an easy lemma, which constitutes our main tool to show that certain positive maps are not Lorentz-factorizable.

\begin{lem}[Lorentz-cone sandwiching] \label{lemma:lorentz-extreme}
Let $\CC_1,\CC_2$ be proper cones. Let $T\in \Pos(\CC_1,\CC_2)$ be extremal and $k \coloneqq \rank (T)$. The map $T$ is Lorentz-factorizable if and only if there exists a $k$-dimensional Lorentzian cone $\L\subseteq \range(T)$ such that $T(\CC_1) \subseteq \L \subseteq \CC_2$.
\end{lem}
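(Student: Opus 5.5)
The ``if'' direction is immediate. Suppose $\L\subseteq \range(T)$ is a $k$-dimensional Lorentzian cone with $T(\CC_1)\subseteq \L\subseteq \CC_2$. Then $T$ factors as $T=\iota\circ T_0$. Here $T_0\in\Pos(\CC_1,\L)$ is $T$ with its codomain restricted to $\range(T)$, and $\iota\in\Pos(\L,\CC_2)$ is the inclusion of $\range(T)$ into $V_2$. Hence $T\in\LorFact(\CC_1,\CC_2)$. No extremality is needed for this direction.

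For the ``only if'' direction, assume $T$ is extremal and Lorentz-factorizable.
\begin{enumerate}
\item Write $T=\sum_{i=1}^N \beta_i\alpha_i$, where each $\beta_i\alpha_i$ factors through a Lorentzian cone $\L^i$.
\item Each summand $\beta_i\alpha_i$ lies in $\Pos(\CC_1,\CC_2)$, and so does $T-\beta_i\alpha_i$. Since $T$ generates an extremal ray, every nonzero summand satisfies $\beta_i\alpha_i=\lambda_i T$ with $\lambda_i>0$. Absorbing the scalar into $\alpha_i$, we get a single factorization $T=\beta\alpha$ with $\alpha\in\Pos(\CC_1,\L)$ and $\beta\in\Pos(\L,\CC_2)$ for some Lorentzian cone $\L$.
\item Apply Lemma~\ref{lemma:lorfact:inj-surj} to replace this by a factorization $T=\beta'\alpha'$ through a Lorentzian cone $\L'$, with $\alpha'$ surjective and $\beta'$ injective.
\end{enumerate}

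Since $\alpha'$ is surjective and $\beta'$ is injective, we have $\range(T)=\beta'(\range(\alpha'))=\beta'(V')$, where $V'$ is the ambient space of $\L'$. Injectivity of $\beta'$ then gives $\dim V'=\rank(T)=k$. Set $\L\coloneqq\beta'(\L')\subseteq\range(T)$. This is a linear isomorphic image of a Lorentzian cone, so it is a $k$-dimensional Lorentzian cone. Positivity of $\beta'$ gives $\L\subseteq\CC_2$. Positivity of $\alpha'$ gives $T(\CC_1)=\beta'(\alpha'(\CC_1))\subseteq\beta'(\L')=\L$.

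The rank-one case needs a separate check, because the proof of Lemma~\ref{lemma:lorfact:inj-surj} treats it separately. If $k=1$, then $\range(T)=\R x$ with $x\in\CC_2$ spanning the image. The one-dimensional Lorentzian cone $\R^+x$ (that is, $\L_1\simeq\R_+$) works directly: it contains $T(\CC_1)$ and lies in $\CC_2$.

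The main point needing care is step~2, the reduction from a convex combination of factorizable maps to a single factorization via extremality. It requires that each summand is itself positive, which holds because $\alpha_i$ and $\beta_i$ are positive, and that extremal means every decomposition into positive summands consists of multiples of $T$. Everything else follows directly from Lemma~\ref{lemma:lorfact:inj-surj}.
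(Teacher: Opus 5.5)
Your proof is correct and is essentially the paper's argument. The paper applies Lemma~\ref{lemma:lorfact:inj-surj} to every summand first and then uses extremality to single out $\beta_1\alpha_1$, whereas you use extremality first and then apply the lemma once; your separate rank-one check is harmless but unnecessary, since the statement of Lemma~\ref{lemma:lorfact:inj-surj} already covers $\rank(P)=1$.
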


\begin{proof}
Let $T\neq 0$ be extremal and Lorentz-factorizable. There exists an integer $N \geq 1$ and for every $1 \leq i \leq N$ a Lorentzian cone $\L^{i}$, a surjective map $\alpha_i \in \Pos(\CC_1,\L^i)$ and an injective map $\beta_i \in \Pos (\L^i,\CC_2)$ such that $T = \sum^N_{i=1} \beta_i \alpha_i$. Since~$T$ is extremal, it is proportional to $\beta_1\alpha_1$. The cone $\L \coloneqq \beta_1(\L^1)$ has the desired properties. The converse is clear.
\end{proof}

In the case of maps from the square-based cone, we have the following specializations of the previous lemma, which we will use heavily later on. 

\begin{lem}
\label{lemma:ellipse-sandwiching}
Let $\CC$ be a proper cone such that $(\CC_\square,\CC)$ has the LFP. Let $x_1$, $x_2$, $x_3$, $x_4$ in $\CC$ such that $x_1+x_4=x_2+x_3$ and such the map $T \coloneqq \sqmap{x_1}{x_2}{x_3}{x_4}$ has rank $3$ and is extremal in $\Pos(\CC_\square,\CC)$. For every $1 \leq i < j \leq 4$, the cone $T(\CC_\square)$ is contained in $\face(x_i+x_j)$.
\end{lem}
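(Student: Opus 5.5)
Since $(\CC_\square,\CC)$ has the LFP and $T$ is extremal of rank $3$, Lemma~\ref{lemma:lorentz-extreme} gives a $3$-dimensional Lorentzian cone $\L\subseteq\range(T)$ with $T(\CC_\square)\subseteq\L\subseteq\CC$. The task is then to show that for every pair $i<j$ the element $x_i+x_j$ lies in the interior of $\L$, relative to the $3$-dimensional space $\range(T)$. Once this is known, the conclusion follows. Since $\L$ spans $\range(T)$ and $x_i+x_j\in\relint(\L)$, for every $y\in\L$ we have $x_i+x_j-\e y\in\L\subseteq\CC$ for some $\e>0$. The characterization of faces recalled in the preliminaries then gives $y\in\face(x_i+x_j)$, so $T(\CC_\square)\subseteq\L\subseteq\face(x_i+x_j)$.

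To prove the interiority claim, I would first reduce to the relative boundary. The points $x_1,\dots,x_4$ lie in $\L$ and span $\range(T)$, so $x_i+x_j\in\L$. If $x_i+x_j$ were in the relative boundary of $\L$, then, because $\L$ is strictly convex, $x_i+x_j$ would span an extremal ray of $\L$. Since $x_i,x_j\in\L$, both would then be multiples of $x_i+x_j$, so $x_i$ and $x_j$ would be collinear (or one of them zero).

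Next I would rule this out using extremality. Proposition~\ref{prop:extremality-conditions}(ii) says $E_i\cap E_j=\{0\}$ with $E_k=\mathspan(\face(x_k))$. Each $x_k$ is nonzero: if, say, $x_1=0$, then $x_4=x_2+x_3$ and the four vectors span at most a $2$-dimensional space, contradicting $\rank(T)=3$. Hence $x_k\in E_k\setminus\{0\}$. If $x_i$ and $x_j$ were collinear and nonzero, then $x_i\in E_i\cap E_j$, a contradiction. So no sum $x_i+x_j$ lies on the relative boundary of $\L$, which is the required claim.

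The only delicate point is making sure all six pairs are covered, including the ``diagonal'' pairs $(1,4)$ and $(2,3)$. These are handled by exactly the same argument, since condition (ii) of Proposition~\ref{prop:extremality-conditions} covers every pair $i<j$. Beyond that, the argument is a direct bookkeeping consequence of the sandwiching lemma and the strict convexity of $3$-dimensional Lorentzian cones, and I do not expect a genuine obstacle.
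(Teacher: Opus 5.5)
Your proof is correct and follows the paper's argument. You sandwich $T(\CC_\square)$ in a $3$-dimensional Lorentzian cone $\L$ via Lemma~\ref{lemma:lorentz-extreme}, use strict convexity of $\L$ to get $x_i+x_j\in\relint(\L)$, and conclude $\L\subseteq\face(x_i+x_j)$. You also spell out why $x_i$ and $x_j$ are nonzero and non-collinear, which the paper leaves implicit. That fact also follows from $\rank(T)=3$ alone, because the only linear relation among the $x_k$ is then, up to scaling, $x_1+x_4=x_2+x_3$.
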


\begin{proof}
By Lemma \ref{lemma:lorentz-extreme}, there exists a $3$-dimensional Lorentzian cone $\L$ containing the points $x_i$ and such that $\L \subseteq \CC$. Since $\L$ is strictly convex, we have $x_i+x_j \in  \relint (\L)$ and therefore $\L \subseteq \face(x_i+x_j)$.
\end{proof}

\begin{lem} \label{lemma:face_with_no_EB}
Let $\CC$ be a proper cone such that $(\CC_\square,\CC)$ has the LFP. Let $x_1$, $x_2$, $x_3$, $x_4$ in $\CC$ such that $x_1+x_4=x_2+x_3$. Assume that $x_1$ and $x_2$ are extremal in $\CC$. Denote $F_i = \face(x_i)$. If
\begin{equation} \label{eq:noEBmap}
F_1 \cap F_2 = F_1 \cap F_3 = F_2 \cap F_4 = F_3 \cap F_4 = \{0\} \end{equation}
then $x_3$ and $x_4$ belong to $\face (x_1+x_2)$.
\end{lem}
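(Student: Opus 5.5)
The plan is to decompose $T \coloneqq \sqmap{x_1}{x_2}{x_3}{x_4}$ into extremal maps of $\Pos(\CC_\square,\CC)$, show that the hypotheses \eqref{eq:noEBmap} exclude rank-one pieces, and then apply Lemma~\ref{lemma:ellipse-sandwiching} to each remaining piece with the pair of indices $(1,2)$.

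The map $T$ is positive: it sends the four extremal rays of $\CC_\square$ into $\CC$. Since $\Pos(\CC_\square,\CC)$ is a proper cone, Minkowski/Carathéodory lets us write $T=\sum_{k=1}^N T_k$ with each $T_k=\sqmap{y^k_1}{y^k_2}{y^k_3}{y^k_4}$ extremal in $\Pos(\CC_\square,\CC)$ and nonzero. Because $T-T_k$ is positive, we have $x_i-y^k_i\in\CC$, and hence $y^k_i\in F_i=\face(x_i)$ for each $i$. As $x_1$ and $x_2$ are extremal, $F_1=\R^+x_1$ and $F_2=\R^+x_2$. We can therefore write $y^k_1=a_kx_1$ and $y^k_2=b_kx_2$ with $a_k,b_k\geq 0$.

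Next we rule out rank-one summands using Proposition~\ref{prop:extremality-conditions}(i). A rank-one extremal map has one of the four forms listed there. Its nonzero value $y$ must then lie in $F_1\cap F_2$, $F_1\cap F_3$, $F_3\cap F_4$ or $F_2\cap F_4$ respectively, since $y^k_i\in F_i$. By \eqref{eq:noEBmap} all these intersections are $\{0\}$, so $y=0$, which is impossible. Hence each $T_k$ has rank $3$ by Lemma~\ref{lemma:rank-not-2}. A rank-$3$ map on $\R^3$ is injective, so $y^k_i=T_k(v)\neq 0$ for each extremal generator $v$ of $\CC_\square$. In particular $a_k>0$ and $b_k>0$.

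Now we apply Lemma~\ref{lemma:ellipse-sandwiching}, which uses the LFP, to each rank-$3$ extremal $T_k$ with indices $i=1$, $j=2$. This gives $y^k_3,y^k_4\in T_k(\CC_\square)\subseteq\face(a_kx_1+b_kx_2)$. For $a_k,b_k>0$ we have $\face(a_kx_1+b_kx_2)=\face(x_1+x_2)$, since each of these two elements dominates a positive multiple of the other. Summing over $k$ and using that a face is a convex cone gives $x_3=\sum_k y^k_3\in\face(x_1+x_2)$, and likewise $x_4\in\face(x_1+x_2)$.

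The only delicate point is the exclusion of rank-one extremal summands. It requires matching each of the four rank-one shapes in Proposition~\ref{prop:extremality-conditions}(i) with the correct pair of positions, which is exactly why \eqref{eq:noEBmap} lists those four pairs. The remaining steps are routine.
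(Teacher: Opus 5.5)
Your proof is correct and is essentially the paper's argument. The paper takes extremal elements of the face $G$ of $\Pos(\CC_\square,\CC)$ cut out by the conditions $a_i\in F_i$; you obtain the same summands by decomposing $T$ in $\Pos(\CC_\square,\CC)$ and deducing $y^k_i\in F_i$ from positivity of $T-T_k$, then you exclude rank-one pieces via \eqref{eq:noEBmap} and apply Lemma~\ref{lemma:ellipse-sandwiching} with $(i,j)=(1,2)$. Your injectivity argument giving $a_k,b_k>0$ makes explicit the identity $\face(a_kx_1+b_kx_2)=\face(x_1+x_2)$, which the paper uses without comment.
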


\begin{proof}
The map $T = \sqmap{x_1}{x_2}{x_3}{x_4}$ belongs to the face $G$ of $\Pos(\CC_\square,\CC)$ defined as
\[ G = \left\{ \Sqmap{a_1}{a_2}{a_3}{a_4} \st a_1+a_4=a_2+a_3,\ a_i \in F_i \textnormal{ for } 1 \leq i \leq 4 \right\} .\]
Consider a map $P=\sqmap{a_1}{a_2}{a_3}{a_4}$ which is extremal in $G$ and hence also in $\Pos(\CC_\square,\CC)$. Equation \eqref{eq:noEBmap} implies that $G$ does not contain any rank $1$ map ; it follows that $\rank(P)=3$. By Lemma~\ref{lemma:ellipse-sandwiching}, we have $a_3,a_4 \in \face (a_1+a_2) = \face (x_1+x_2)$. The result follows by decomposing $T$ as a sum of extremal maps $T=\sum_k P_k$ with $P_k\in G$.
\end{proof}

To illustrate how to use this lemma, we will now prove two of our main results.

\subsection{Proofs of Theorem \ref{thm:SameCone} and Theorem \ref{thm:LFPPolyhedral}}

We start with the first result concerning the case where $\CC_1=\CC_2=\CC$ for some proper cone $\CC$.

\begin{theoremrestated}{thm:SameCone}[restated]
Let $\CC$ be a proper cone. The following are equivalent
\begin{enumerate}
\item The pair $(\CC,\CC)$ has the LFP.
\item The cone $\CC$ is isomorphic to a direct sum of Lorentz cones.
\end{enumerate}
\end{theoremrestated}

\begin{proof}
By statement \eqref{item: direct_sum} in Proposition~\ref{prop:LFP-passesTo} we may assume without loss of generality that $\CC\subseteq V$ is indecomposable. By \cite[Theorem 3.3]{loewy1975indecomposable} the identity map $\ident_V$ is extremal in $\Pos(\CC,\CC)$. If $\CC$ has the LFP, then by Lemma \ref{lemma:lorentz-extreme} there is a Lorentzian cone $\L$ such that $\CC=\ident_V(\CC)\subseteq \L \subseteq \CC$ and hence $\CC=\L$. 
\end{proof}

Next, we identify the pairs $(\CC_1,\CC_2)$ of polyhedral cones which have the LFP.

\begin{theoremrestated}{thm:LFPPolyhedral}[restated]
Let $\CC_1$ and $\CC_2$ be proper polyhedral cones. The following are equivalent
\begin{enumerate}
\item The pair $(\CC_1,\CC_2)$ has the LFP.
\item Either $\CC_1$ or $\CC_2$ is classical.
\end{enumerate}
\end{theoremrestated}

\begin{proof}
Clearly, the pair $(\CC_1,\CC_2)$ has the LFP if either $\CC_1$ or $\CC_2$ is classical. To prove the other direction, we assume that $\CC_1$ and $\CC_2$ are proper polyhedral cones that are not classical. By Lemma \ref{retract-NonClassical}, both of these cones have non-classical $3$-dimensional retracts. By statement \eqref{item: retract} in Proposition~\ref{prop:LFP-passesTo}, it is therefore enough to show that any pair of proper non-classical $3$-dimensional polyhedral cones does not have the LFP.

Let $\CC_1\subseteq V_1$ and $\CC_2\subseteq V_2$ be proper non-classical $3$-dimensional polyhedral cones. There exist pairwise non-collinear elements $x_1,x_2,x_3,x_4$ extremal in $\CC_1$ such that $F_{12}:=\face(x_1+x_2)$, $F_{23}:=\face(x_2+x_3)$ and $F_{34}:=\face(x_3+x_4)$ are 2-dimensional faces of $\CC_1$. Similarly, there exist pairwise non-collinear elements $y_1,y_2,y_3,y_4$ extremal in $\CC_2$ such that $G_{12}:=\face(y_1+y_2)$, $G_{23}:=\face(y_2+y_3)$ and $G_{34}:=\face(y_3+y_4)$ are 2-dimensional faces of $\CC_2$.
Next, we define the set 
\[
\mathcal{F} = \lset T\in\Pos(\CC_1,\CC_2)~:~T(x_1)\in G_{12},\ T(x_2)\in \R_+ y_2,\ T(x_3)\in \R_+ y_3,\ T(x_4)\in G_{34}\rset .
\]
The set $\mathcal{F}$ is a face of the cone of positive maps $\Pos(\CC_1,\CC_2)$. To finish the proof we will find an extremal map in $\mathcal{F}$ that is not Lorentz-factorizable.

Consider extremal $f_{12},f_{23},f_{34}\in \CC_1^*$ such that $\ker( f_{12}) \cap \CC_1 = F_{12}$, $\ker(f_{23}) \cap \CC_1 = F_{23}$ and $\ker( f_{34} )\cap  \CC_1 = F_{34}$. Observe that any extremal element of $\CC^*_1$ vanishing on $x_2$ is proportional to either $f_{12}$ or $f_{23}$, and any extremal element of $\CC_1^*$ vanishing on~$x_3$ is proportional to either $f_{23}$ or $f_{34}$. Let $S\in \mathcal{F}$ denote an extremal map with $\rank(S)=1$. Then, we have $S(x)=f(x)y$ for a non-zero extremal $f\in \CC^*_1$ and a non-zero extremal $y\in \CC_2$. Since $y_2$ and $y_3$ are linearly independent, we have $f(x_2)=0$ or $f(x_3)=0$. If $f(x_2)=f(x_3)=0$, then we have $f(x_1)>0$ and $f(x_4)>0$ and hence $y\in G_{12}\cap G_{34}$ leading to a contradiction as $G_{12}\cap G_{34}=\lset 0\rset$. There are only two possibilities: Either $f(x_2)=0$ and $f(x_3)>0$, in which case $f$ must be proportional to $f_{12}$ and thus $y$ is proportional to $y_3$. Or, we have $f(x_2)>0$ and $f(x_3)=0$, in which case~$f$ must be proportional to $f_{34}$ and thus $y$ is proportional to $y_2$. We conclude that $S_1 = y_3 f_{12}$ and $ S_2 = y_2 f_{34}$ are the only extremal maps with rank $1$ in the face $\mathcal{F}$.

In the remaining part of the proof we will construct a map $T\in \mathcal{F}$ with rank~$3$. By the previous paragraph and Lemma \ref{lemma:rank-not-2}, the existence of such a map implies the existence of an extremal map $T'\in \mathcal{F}$ with $\rank(T')=3$. Such a map $T'$ is injective and there are $\alpha_2,\alpha_3 >0$ such that $T'(x_2)=\alpha_2y_2$ and $T'(x_3)=\alpha_3y_3$. By the choices made above, this implies
\[
y_2,y_3, y_2+y_3\in T'(\CC_1)\cap \partial \CC_2 ,
\]
and hence there cannot exist a $3$-dimensional Lorentzian cone $\L$ with $T'(\CC_1)\subseteq \L \subseteq \CC_2$ as such a cone would be strictly convex. By Lemma \ref{lemma:lorentz-extreme}, we conclude that the map $T'$ is not Lorentz-factorizable and $(\CC_1,\CC_2)$ does not have the LFP.

We finally construct a map $T\in\mathcal{F}$ with $\rank (T)=3$. Consider vectors $z,z'\in V_1$ such that $F_{12}\subseteq \cone \{x_2,z \}$, $F_{34}\subseteq \cone \{ x_3,z'\}$, and $\CC_1\subseteq \CC':=\cone\{z,x_2,x_3,z'\}$ (see Figure \ref{fig:Outertrapezoid}). As in Lemma~\ref{lemma:rescaling}, there is a linear map $T:V_1 \to V_2$ of rank $3$ such that $T(z) \in \R_+y_1$, $T(x_2) \in \R_+ y_2$, $T(x_3) \in \R_+ y_3$ and $T(z') \in \R_+y_4$. 

\begin{figure}[htbp]
\begin{minipage}{0.45\textwidth}
\begin{center}
\begin{tikzpicture}[scale=1/3]
    \coordinate (A) at (0,1);
    \coordinate (B) at (0,2);
    \coordinate (C) at (2,5);
    \coordinate (D) at (7,5);
    \coordinate (E) at (9,3);
    \coordinate (F) at (8,0);
    \coordinate (G) at (3,0);

    \coordinate (Z1) at (-2,-1);
    \coordinate (Z2) at (13,-1);

    \draw[fill={gray!20}] (C) -- (Z1) -- (Z2) -- (D) -- (C);
    \draw[fill={gray!40}] (A) -- (B) -- (C) -- (D) -- (E) -- (F) -- (G) -- (A);
        
    \fill (B) circle (3pt) node[left] {$x_1$};
    \fill (C) circle (3pt) node[above left] {$x_2$};
    \fill (D) circle (3pt) node[above right] {$x_3$};
    \fill (E) circle (3pt) node[right] {$x_4$};
    \node at (4,2) {$\CC_1$};

    \fill (Z1) circle (3pt) node[left] {$z$};
    \fill (Z2) circle (3pt) node[right] {$z'$};
    
    \draw (C) -- (Z1) -- (Z2) -- (D);
\end{tikzpicture}
\end{center}
\end{minipage}
\begin{minipage}{0.45\textwidth}
\begin{center}
\begin{tikzpicture}[scale=1/3]
    \coordinate (A) at (3*0.142,3*1.478);
    \coordinate (B) at (3*0,3*1.714);
    \coordinate (C) at (3*0,3*3);
    \coordinate (D) at (3*2,3*4);
    \coordinate (E) at (3*2.857,3*3.143);
    \coordinate (F) at (3*2.309,3*1.835);
    \coordinate (G) at (3*0.929,3*1.455);
    \coordinate (X) at (3*2,0);
    \coordinate (Y) at (3*1,0);

    \coordinate (Z1) at (3*0,3*1);
    \coordinate (Z2) at (3*4,3*2);

    \fill (Z1) circle (3pt) node[left] {$y_1$};
    \fill (C) circle (3pt) node[above left] {$y_2$};
    \fill (D) circle (3pt) node[above right] {$y_3$};
    \fill (Z2) circle (3pt) node[right] {$y_4$};

    \draw[fill={gray!20}] (C) -- (Z1) -- (Z2) -- (D) -- (C);
    \draw[fill={gray!40}] (A) -- (B) -- (C) -- (D) -- (E) -- (F) -- (G) -- (A);

    \node at (3,7.2) {$T(\CC_1)$};
    \node at (9,1.5) {$\CC_2$};

    
    \draw (C) -- (Z1) -- (Z2) -- (D);

    \draw (Z1) -- (C) -- (D) -- (Z2) -- (X) -- (Y) -- (Z1) ;
   
\end{tikzpicture}
\end{center}
\end{minipage}
\caption{Sketch of construction used in the proof of Theorem~\ref{thm:LFPPolyhedral}. All points are drawn up to scaling with a positive factor to make them coplanar.}\label{fig:Outertrapezoid}
\end{figure}

Since $T(\CC_1)\subseteq T(\CC')\subseteq \CC_2$, the map $T$ is positive. Since $T(x_1)\in T(\cone \{x_2,z\})\subseteq G_{12}$ and $T(x_4)\in T(\cone\{x_3,z'\})\subseteq G_{34}$, we conclude that $T\in\mathcal{F}$. 
\end{proof}

\section{LFP pairs with the square-based cone}

\subsection{When \texorpdfstring{$\CC$}{C} is strictly convex: Proof of Theorem \ref{theorem:strictly-convex}}

In planar geometry, three points $A,B,C$ and tangent lines at $A$ and $B$ determine at most one ellipse. To see this, we may apply a projective transformation to reduce to the case where the tangent lines at points $A$ and $B$ are parallel and both orthogonal to the line $(AB)$. This forces the segment $[AB]$ to be an axis of the ellipse, in which case the result is clear (see Figure~\ref{fig:ellipse}).

\begin{figure}[htbp]
\begin{center}
\begin{tikzpicture}[scale=0.7]

\def\a{3.5cm}
\def\b{2cm}

\draw[red, very thick] (0,0) ellipse [x radius=\a, y radius=\b];
\draw[red] (0,0) ellipse [x radius=\a, y radius=1.2*\b];
\draw[red] (0,0) ellipse [x radius=\a, y radius=0.8*\b];
\draw[red] (0,0) ellipse [x radius=\a, y radius=0.6*\b];

\coordinate (A) at (-3.5,0);
\coordinate (B) at ( 3.5,0);
\coordinate (C) at ({3.5*cos(55)},{2*sin(55)});

\draw[dashed] (A) -- (B);

\draw[blue, thick] (-3.5,-2.2) -- (-3.5,2.2);
\draw[blue, thick] ( 3.5,-2.2) -- ( 3.5,2.2);

\fill (A) circle (1.5pt) node[below left] {$A$};
\fill (B) circle (1.5pt) node[below right] {$B$};
\fill (C) circle (1.5pt) node[above right] {$C$};

\end{tikzpicture}
\end{center}
\caption{Among the family of ellipses having $[AB]$ as an axis, at most one passes through a point $C \neq A,B$}\label{fig:ellipse}
\end{figure}

\begin{lem} \label{lemma:unique-ellipse}
Let $x,y,z$ be linearly independent and extremal in a proper cone $\CC$. If $\L_1$ and $\L_2$ are $3$-dimensional Lorentzian cones containing $x,y,z$ and contained in~$\CC$, then $\L_1=\L_2$.
\end{lem}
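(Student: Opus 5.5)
Both $\L_1$ and $\L_2$ are $3$-dimensional and contain the linearly independent vectors $x,y,z$, so they live in the same $3$-dimensional subspace $W=\mathspan(x,y,z)$. Passing to an affine slice, they have bases which are ellipses $E_1,E_2$ in a plane, both passing through the three points $A,B,C$ corresponding to $x,y,z$. The plan is to reduce to the planar uniqueness statement illustrated in Figure~\ref{fig:ellipse}: three points together with the tangent lines at two of them determine at most one ellipse. It therefore suffices to show that $E_1$ and $E_2$ have the same tangent lines at $A$ and at $B$.

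\textbf{Step 1 (tangents agree).} Consider $x$. Since $x$ is extremal in $\CC$, the face $\face_{\CC}(x)$ is $\R^+x$. The supporting hyperplanes of $\L_i$ at $x$ within $W$ correspond to the unique (up to scaling) $\varphi_i\in\L_i^*$, a functional on $W$, vanishing at $x$. I would show that $\varphi_1$ and $\varphi_2$ agree up to scaling. Suppose not. Their kernels are distinct planes in $W$ through $x$, namely the tangent planes to the two cones along $\R^+x$. Since $\L_1,\L_2\subseteq\CC\cap W$, the cone $\CC\cap W$ contains $\conv(\L_1\cup\L_2)$.

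Near $x$, the two smooth boundary curves cross transversally at $A$. The convex hull of $E_1\cup E_2$ then contains a neighbourhood of $A$ in which $A$ is a corner point. Consequently $A$ is not in the interior of the hull, yet the hull contains points on both sides of each tangent line.

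The conclusion I want is that $x$ lies in the relative interior of a $2$-dimensional face of $\CC\cap W$, or else in its interior, contradicting extremality. This needs care. A corner point can still be extremal (e.g. the vertex of a square), so the crossing argument alone is not enough. I instead intend to use the rigidity of $\CC$ as a fixed cone containing both ellipses.

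\textbf{Step 1, corrected argument.} Let $\psi$ be any supporting functional of $\CC\cap W$ at $x$, that is, $\psi\geq 0$ on $\CC\cap W$ and $\psi(x)=0$. Then $\psi\geq0$ on $\L_i$ and $\psi(x)=0$, so $\psi\in\L_i^*$ vanishes at $x$, forcing $\psi\propto\varphi_i$. Hence $\varphi_1\propto\psi\propto\varphi_2$. Such a $\psi$ exists because $x\in\partial(\CC\cap W)$, which holds since $x$ is extremal. This settles Step 1 cleanly, and the same argument applies at $y$.

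\textbf{Step 2.} Now $E_1$ and $E_2$ are ellipses through $A,B,C$ with common tangent lines at $A$ and $B$. The three points are distinct and non-collinear by linear independence. Apply the projective normalization described before the lemma, which sends both tangent lines to lines parallel to each other and orthogonal to $AB$. For this, note that each ellipse lies on one side of each of its tangents, and the transformation must keep the ellipses bounded; equivalently, choose a different affine slice of $W$ that meets the cones compactly. In the normalized picture, $[AB]$ is an axis of both ellipses. Such an ellipse is determined by the length of its other semi-axis, and requiring it to pass through $C\notin\{A,B\}$ fixes that length, since $C$ is off the line $AB$. Hence $E_1=E_2$, and so $\L_1=\L_2$.

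\textbf{Main obstacle.} The main obstacle is the justification of the normalization in Step 2. An alternative that avoids it is a direct conic count. A conic tangent to two given lines at two given points forms a pencil $\lambda\ell_A\ell_B+\mu m^2$, where $m$ is the line $AB$ and $\ell_A,\ell_B$ are the two tangent lines. Passing through $C$ then fixes $[\lambda:\mu]$ uniquely, because $C$ lies neither on $m$ nor on $\ell_A$ or $\ell_B$ (a tangent line meets the ellipse only at its point of tangency). I would use this pencil argument as the cleaner route.
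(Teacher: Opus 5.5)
Your proof is correct and follows the paper's route. A supporting functional at $x$ (and at $y$) is forced to be the unique tangent functional of both $\L_1$ and $\L_2$; you take it for $\CC\cap W$ inside $W$, which is slightly more careful than the paper's phrasing, and then the planar fact that three points plus two tangent lines determine at most one ellipse finishes the proof. Your pencil-of-conics argument via $\lambda\ell_A\ell_B+\mu m^2$ is a valid substitute for the paper's projective normalization, and a slightly cleaner one, since it works directly with the quadratic forms on $W$ and needs no bounded affine slice.
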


\begin{proof}
Let $E$ be the affine plane generated by $x, y, z$. The sets $\L_1 \cap E$ and $\L_2 \cap E$ are solid ellipses containing $x, y, z$ in their boundary. If $H$ is a support hyperplane to $\CC$ at $x$ (resp.\ at $y, z$), then $H \cap E$ is a line tangent to both ellipses at $x$ (resp.\ at $y,z$). By the paragraph preceding the lemma, both ellipses coincide and therefore $\L_1=\L_2$.
\end{proof}

Using this lemma, we can show the next proposition:

\begin{prop} \label{proposition:three-extremals}
Let $\CC$ be a proper cone such that $(\CC_\square,\CC)$ has the LFP. If $x,y,z\in \CC$ are extremal and pairwise non-collinear, then the cone $\CC \cap \mathspan \{x,y,z\}$ is either classical or Lorentzian.
\end{prop}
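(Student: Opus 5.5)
We argue by dichotomy. Set $W=\mathspan\{x,y,z\}$ and $D=\CC\cap W$, a proper cone in the $3$-dimensional space $W$ in which $x,y,z$ are extremal. If the extremal rays of $D$ are exactly $\R^+x,\R^+y,\R^+z$, then $D$ is classical. Otherwise we show that $D$ equals a single $3$-dimensional Lorentzian cone. The plan is to use LFP to produce Lorentzian cones through $x,y,z$ that are sandwiched in $\CC$. Lemma~\ref{lemma:unique-ellipse} then says they are all the same cone $\L_0$. Finally we show $D\subseteq\L_0\subseteq D$.

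First, a remark used repeatedly. If $w$ is extremal in $D$, then $\face_\CC(w)\cap W=\face_D(w)=\R^+w$. Indeed, if $v\in\face_\CC(w)\cap W$ then $w\pm\e v\in\CC\cap W=D$ for small $\e>0$, which forces $v\in\R w$. In particular $\face_\CC(w)$ contains none of $x,y,z$ unless $w$ is collinear with it.

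Now let $w$ be extremal in $D$ and not collinear with $x,y,z$. Then $w\notin\cone\{x,y,z\}$. Since each of $x,y,z,w$ spans an extremal ray of $D$ and they are pairwise non-collinear, the four points are in convex position in $D$, and they span $W$. Lemma~\ref{lemma:rescaling} then gives positive scalings and a labelling $x_1,\dots,x_4$ of $\lambda x,\mu y,\nu z,\rho w$ with $x_1+x_4=x_2+x_3$. The map $T=\sqmap{x_1}{x_2}{x_3}{x_4}$ lies in $\Pos(\CC_\square,\CC)$ and has rank $3$. It belongs to the face $G$ of $\Pos(\CC_\square,\CC)$ consisting of maps whose three vertices lie on the rays $\R^+x,\R^+y,\R^+z$ (in the same slots) and whose fourth vertex lies in $\face_\CC(w)$. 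By the relation, that fourth vertex is a combination of the other three, so every map in $G$ has range in $W$.

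Decompose $T=\sum_k P_k$ with $P_k$ extremal in $G$, hence extremal in $\Pos(\CC_\square,\CC)$. We claim no $P_k$ has rank $1$. The rank-one extremal forms in Proposition~\ref{prop:extremality-conditions}(i) have two zero slots and two equal nonzero slots. At least one of these equal slots is a fixed slot, lying on one of the rays $\R^+x,\R^+y,\R^+z$. So either two fixed slots carry the same vector, contradicting pairwise non-collinearity, or a fixed ray lies in $\face_\CC(w)\cap W=\R^+w$, contradicting the remark above. Hence, by Lemma~\ref{lemma:rank-not-2}, each nonzero $P_k$ has rank $3$.

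Since $(\CC_\square,\CC)$ has the LFP, Lemma~\ref{lemma:lorentz-extreme} gives, for each $k$, a $3$-dimensional Lorentzian cone $\L_k\subseteq\range(P_k)=W$ with $P_k(\CC_\square)\subseteq\L_k\subseteq\CC$. Each $\L_k$ contains the three fixed vertices, hence $x,y,z$. By Lemma~\ref{lemma:unique-ellipse} all $\L_k$ coincide with one cone $\L_0$. Crucially, $\L_0$ is determined by $x,y,z$ alone, so it is independent of $w$ and $k$. Therefore $T(\CC_\square)\subseteq\L_0$, and in particular $w\in\L_0$.

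Running this over all extremal rays of $D$ other than $x,y,z$ (which lie in $\L_0$ anyway) gives $D\subseteq\L_0$, since $D$ is generated by its extremal rays. Conversely $\L_0\subseteq\CC\cap W=D$, so $D=\L_0$ is Lorentzian.

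The main obstacle is the rank-one exclusion for the summands $P_k$. Its resolution is the face remark: extremality of $w$ in $D$, rather than in $\CC$, is enough. A secondary check is that the decomposition stays in $G$, so that every summand shares the same three fixed rays; this is what makes Lemma~\ref{lemma:unique-ellipse} applicable uniformly.
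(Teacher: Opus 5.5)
Your proof is correct, and its overall structure matches the paper's. Lemma~\ref{lemma:rescaling} gives a map $T$ from the square whose vertices are multiples of $x,y,z,w$. Lorentz sandwiching then gives a $3$-dimensional Lorentzian cone through $x,y,z,w$ inside $\CC\cap W$. Lemma~\ref{lemma:unique-ellipse} makes that cone independent of $w$, and it follows that $\CC\cap W$ equals it.

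Where you differ is in how you get the sandwiching cone for $T$. The paper asserts directly that $T$ is extremal in $\Pos(\CC_\square,\CC)$, using the ``if'' direction of Proposition~\ref{prop:extremality-conditions}(ii). This requires checking $E_i\cap E_j=\{0\}$ and $(E_1+E_4)\cap(E_2+E_3)=\R(x_1+x_4)$. Both checks amount to $\mathspan(\face(w))\cap W=\R w$, which follows from your remark because $w$ lies in the relative interior of $\face(w)$.

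You instead decompose $T$ inside the face $G$, which is the device of Lemma~\ref{lemma:face_with_no_EB}. This way you only need the rank-one classification in Proposition~\ref{prop:extremality-conditions}(i) and your remark, never the converse direction of part (ii). The cost is a second use of Lemma~\ref{lemma:unique-ellipse}, to glue together the cones $\L_k$ of the different pieces $P_k$.

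The paper's route is shorter, and it shows afterwards that your decomposition is trivial: every $P_k$ is a multiple of $T$. Your route is more robust, since it never needs $T$ itself to be extremal.

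One step deserves a line of justification. You need the three fixed vertices of each rank-$3$ map $P_k$ to be nonzero. This holds because a map $\sqmap{a_1}{a_2}{a_3}{a_4}$ with a vanishing slot has rank at most $2$, by the relation $a_1+a_4=a_2+a_3$. It is exactly what guarantees $x,y,z\in\L_k$.
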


\begin{example}
A special case of Theorem \ref{thm:symmetric-cones-LFP} is that $(\CC_{\square},\PSD_n(\R))$ has the LFP. We can illustrate Proposition \ref{proposition:three-extremals} in this case: Let $v_1$, $v_2$, $v_3$ be unit vectors in $\R^n$ and $\Pi_1$, $\Pi_2$, $\Pi_3$ be the corresponding rank $1$ orthogonal projections. The cone $\PSD_n(\R) \cap \mathspan\{\Pi_1,\Pi_2,\Pi_3\}$ is classical if the vectors $v_1,v_2,v_3$ are linearly independent and Lorentzian otherwise. 
\end{example}

\begin{proof}
Let $V \coloneqq \mathspan \{x,y,z\}$ and note that $\dim(V)=3$ by the assumptions. Suppose that the $3$-dimensional cone $\CC \cap V$ is not classical. Choose a vector $w$ extremal in $\CC \cap V$ and not collinear to either~$x$,~$y$ or~$z$.
By Lemma \ref{lemma:rescaling}, up to permuting and scaling the vectors $x$, $y$, $z$, we may assume that $x+y=z+w$. Since $\face(x)=\R_+x$, $\face(y)=\R_+y$, $\face(z)=\R_+z$ and $\face(w) \cap V = \R_+w$, Proposition \ref{prop:extremality-conditions} implies that the map $T = \sqmap{x}{z}{w}{y}$ is extremal in $\Pos(\CC_\square,\CC)$. Since $T$ is Lorentz-factorizable and has rank~$3$, by Lemma~\ref{lemma:lorentz-extreme}, there exists a $3$-dimensional Lorentzian cone $\L_w$ such that $\L_w \subseteq \CC \cap V$ and $x,y,z,w \in \L_w \subseteq \CC \cap V$.

By Lemma \ref{lemma:unique-ellipse}, we have $\L_{w'} = \L_w$ whenever $w'$ is extremal in $\CC \cap V$ and proportional to neither $x$, $y$, $z$. Since $\L_w$ contains every extremal element of $\CC \cap V$, we have $\CC \cap V=\L_w$, so $\CC \cap V$ is Lorentzian.
\end{proof}

\begin{proof}[Proof of Theorem \ref{theorem:strictly-convex}]
Since any cone of dimension $1$ or $2$ is Lorentzian, we may assume that \mbox{$\dim (\CC) \geq 3$}. Let $V$ be a $3$-dimensional subspace intersecting $\inter (\CC)$. Let $x,y,z$ be elements in $\partial \CC$ such that $V=\mathspan \{x,y,z\}$. Since $\CC$ is strictly convex, the points $x, y, z$ are extremal in $\CC$. By Proposition \ref{proposition:three-extremals}, the cone $\CC \cap V$ is Lorentzian or classical. The latter is excluded since $\CC$ is strictly convex. Hence, for every $3$-dimensional subspace $V$ intersecting $\inter(\CC)$, the cone $\CC \cap V$ is Lorentzian. A classical result (see for example \cite[p.\ 91]{Busemann55}) implies that~$\CC$ is Lorentzian.
\end{proof}

\subsection{The Lorentzian faces of \texorpdfstring{$\CC$}{C} when \texorpdfstring{$(\CC_\square,\CC)$}{(Csquare,C)} has the LFP}

We will now use the results from the previous sections to study the faces of proper cones $\CC$ for which $(\CC_\square,\CC)$ has the LFP. We start with an important lemma.

\begin{lem}\label{lemma:two-extreme-points}
Let $\CC$ be a proper cone such that $(\CC_\square,\CC)$ has the LFP. Then, either $\CC$ is strictly convex, or for every $x,y$ extremal in $\CC$ we have $x+y \in \partial\CC$.
\end{lem}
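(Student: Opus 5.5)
The plan is to prove the contrapositive. Assume that $\CC$ is not strictly convex and that there are extremal $x,y$ with $x+y\in\inter(\CC)$, and derive a contradiction from Lemma~\ref{lemma:face_with_no_EB}.

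First, non-strict convexity gives a nonzero boundary point whose face $F$ has dimension at least $2$. A proper face of a proper cone is generated by its extremal rays, and extremal rays of a face are extremal in $\CC$. So we can pick non-collinear $u,v$ extremal in $\CC$ with $F_0\coloneqq\face(u+v)\subseteq F$ a proper face, i.e.\ $u+v\in\partial\CC$. Since $x+y$ is interior, $x$ and $y$ cannot both lie in $F_0$; say $x\notin F_0$.

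Next, we apply Lemma~\ref{lemma:face_with_no_EB} with $x_1=u$ and $x_2=v$. The lemma requires $x_3,x_4\in\CC$ with
\[ u+x_4=v+x_3, \]
and the faces $F_i=\face(x_i)$ must satisfy \eqref{eq:noEBmap}. Its conclusion is then $x_3,x_4\in F_0$, so it suffices to produce such a pair with $x_3\notin F_0$. Note that $F_1\cap F_2=\{0\}$ holds automatically, since $u,v$ are extremal and non-collinear. Also $F_1\cap F_3=\{0\}$ just says $u\notin\face(x_3)$, and similarly for the remaining two conditions.

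To construct $x_3,x_4$, I would look in $W=\mathspan\{u,v,x,y\}$, or a suitable $3$-dimensional slice of it, for a boundary point $c$ of $\CC$ such that $c+(v-u)$ is also a boundary point. Concretely, I would consider the family
\[ c_s \coloneqq s x + t(s)(v-u), \]
chosen so that both $c_s$ and $c_s+(v-u)$ stay in $\CC$. The interiority of $x+y$ gives room: for large $R$, the point $R(x+y)\pm(v-u)$ is interior. From there I would slide toward the boundary along a direction through $x$ until one of the two points hits $\partial\CC$. We then set $x_3=c$ and $x_4=c+v-u$, rescaling if needed.

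The main obstacle is controlling the faces in this construction. We need $\face(x_3)$ to avoid $u$, $\face(x_4)$ to avoid $v$, $\face(x_3)\cap\face(x_4)=\{0\}$, and $x_3\notin F_0$. Generic boundary points have faces that could be large, so the first thing I would try is to choose $x_3$ and $x_4$ extremal. One route is to work in a $3$-dimensional subspace $V'$ through $u,v,x$. There, Lemma~\ref{lemma:rescaling} together with Proposition~\ref{proposition:three-extremals} controls the shape of $\CC\cap V'$: it is either classical or Lorentzian, and if it is Lorentzian then $u+v$ would be interior in $V'$. This extremal choice is how I would make the faces small and verify \eqref{eq:noEBmap}. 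The lemma then gives $x_3\in F_0$, contradicting the choice of $x_3$ near the direction of $x\notin F_0$.
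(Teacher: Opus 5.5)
There is a genuine gap. Your argument depends on producing $x_3,x_4\in\CC$ with $u+x_4=v+x_3$ that satisfy \eqref{eq:noEBmap} and have $x_3\notin F_0$, and this construction is never carried out. The sliding family $c_s=sx+t(s)(v-u)$ gives no control over $\face(x_3)$ and $\face(x_4)$. The route you propose for controlling them also cannot work, as follows.

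\begin{itemize}
\item The slice $V'=\mathspan\{u,v,x\}$ is $3$-dimensional, because $\mathspan(F_0)\cap\CC=F_0$ and $x\notin F_0$.
\item The Lorentzian alternative of Proposition~\ref{proposition:three-extremals} is already a contradiction. Then $u+v$ lies in the relative interior of $\CC\cap V'$, so $x\in\face(u+v)=F_0$. You noticed that $u+v$ is interior in $V'$ but did not draw this conclusion.
\item So $\CC\cap V'=\cone\{u,v,x\}$ is classical. Write $x_3=au+bv+cx$ with $a,b,c\geq 0$. Then $x_4=x_3+v-u\in\cone\{u,v,x\}$ forces $a\geq 1$. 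Hence $u\in\face(x_3)$, and $F_1\cap F_3\neq\{0\}$ fails.
\end{itemize}

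Any admissible pair would therefore have to leave $V'$, and the proposal gives no way to do that. In particular, the hypothesis $x+y\in\inter(\CC)$ is never actually used.

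The missing idea is to apply Proposition~\ref{proposition:three-extremals} to a slice that contains the \emph{interior} sum, where the dichotomy is decisive.

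\begin{itemize}
\item Let $w_1,w_2,w_3$ be linearly independent extremal elements with $w_1+w_2\in\inter(\CC)$, and set $S=\CC\cap\mathspan\{w_1,w_2,w_3\}$.
\item $S$ cannot be classical. It would then equal $\cone\{w_1,w_2,w_3\}$, and $w_1+w_2$ would lie on its relative boundary.
\item So $S$ is Lorentzian, hence strictly convex. Thus $w_1+w_3$ lies in the relative interior of $S$.
\item This gives $w_1+w_2\in\face(w_1+w_3)$, hence $w_1+w_3\in\inter(\CC)$.
\end{itemize}

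Apply this to $(x,y,u)$ to get $x+u\in\inter(\CC)$. Then apply it to $(u,x,v)$ to get $u+v\in\inter(\CC)$, contradicting your choice of $u,v$. Degenerate collinearities, such as $u$ collinear with $x$ or $y$, are handled by relabeling.

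This is exactly the paper's proof. It establishes the equivalence $w_1+w_2\in\partial\CC \iff w_1+w_3\in\partial\CC$ for linearly independent extremal triples and chains it. Lemma~\ref{lemma:face_with_no_EB} is not needed at all.
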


\begin{proof}
It suffices to show that for every $w_1,w_2,w_3$ linearly independent and extremal in $\CC$, we have
\begin{equation} \label{eq:two-extreme-points} w_1+w_2 \in \partial \CC \ \textnormal{ if and only if } \ w_1+w_3 \in \partial \CC .\end{equation}
Indeed, if $\CC$ is not strictly convex, then there exists $z_1,z_2$ linearly independent and extremal in $\CC$ such that $z_1 +z_2 \in \partial \CC$; repeated uses of \eqref{eq:two-extreme-points} imply that $x+ y \in \partial \CC$ for every $x,y$ extremal in $\CC$.

By symmetry, it suffices to show one direction in \eqref{eq:two-extreme-points}. If we assume that $w_1 + w_2 \in \int(\CC)$, then by Proposition \ref{proposition:three-extremals} the cone $\CC \cap \mathspan \{w_1,w_2,w_3\}$ is Lorentzian and in particular strictly convex. This implies that 
$\CC \cap \mathspan \{w_1,w_2,w_3\}\subseteq \face(w_1+w_3)$ and hence $w_1+w_2\in \face(w_1+w_3)$ showing that $w_1+w_3 \in \int(\CC)$.
\end{proof}

The previous lemma implies immediately that for any proper cone $\CC$ not strictly convex and such that $(\CC_\square,\CC)$ has the LFP, the set $\face(x+y)$ for $x,y$ extremal in $\CC$ is never the entire cone $\CC$. We can say a bit more in this case: 

\begin{prop}\label{prop:SomeFacesLorentz} 
Let $\CC$ be a proper cone such that $(\CC_\square,\CC)$ has the LFP. For every $x,y$ extremal in $\CC$, the face $\face(x+y)$ is Lorentzian.
\end{prop}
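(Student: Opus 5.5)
Fix $x,y$ extremal in $\CC$ and set $F\coloneqq\face(x+y)$. If $x,y$ are collinear, $F=\R_+x$ is Lorentzian, so assume they are not. By Proposition~\ref{prop:LFP-passesTo}(2), the pair $(\CC_\square,F)$ again has the LFP. Since $x,y\in F$ and $x+y\in\relint(F)$, we may replace $\CC$ by $F$. The goal is then: if $(\CC_\square,\CC)$ has the LFP, and $x,y$ are extremal with $x+y\in\inter(\CC)$, then $\CC$ is Lorentzian. Extremality of $x,y$ is preserved, since faces of a face are faces.

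First I show that $\CC$ is strictly convex. Lemma~\ref{lemma:two-extreme-points} says that either $\CC$ is strictly convex, or every sum of two extremal elements lies in $\partial\CC$. The second alternative is excluded because $x+y\in\inter(\CC)$. So $\CC$ is strictly convex. Note that Lemma~\ref{lemma:two-extreme-points} applies to the reduced cone because the LFP passed to the face.

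Second, I apply Theorem~\ref{theorem:strictly-convex}, which is stated earlier and assumed proved. A proper strictly convex cone $\CC$ for which $(\CC_\square,\CC)$ has the LFP is Lorentzian. Hence $F$ is Lorentzian, and the degenerate cases $\dim F\le 2$ are covered automatically, since such cones are Lorentzian.

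I expect the only delicate point to be the face-reduction bookkeeping. One must check that $F$ is a proper cone in $\mathspan(F)$, that $x+y$ lies in its interior there, and that Lemma~\ref{lemma:two-extreme-points} and Theorem~\ref{theorem:strictly-convex} are applied to $F$ viewed in its own span. Once these are in place, the argument is a direct combination of results already established.
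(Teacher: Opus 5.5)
Your proposal is correct and follows the paper's own proof: you pass the LFP to $F=\face(x+y)$ via Proposition~\ref{prop:LFP-passesTo}(2), use $x+y\in\relint(F)$ together with Lemma~\ref{lemma:two-extreme-points} to conclude that $F$ is strictly convex, and then apply Theorem~\ref{theorem:strictly-convex}. Your additional bookkeeping (the collinear case, and viewing $F$ as a proper cone in its own span) is only a more explicit version of what the paper leaves implicit.
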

 
\begin{proof}
Let $F = \face(x+y)$. By Proposition~\ref{prop:LFP-passesTo}, the pair $(\CC_\square,F)$ has the LFP as well. As $x+y$ belongs to the relative interior of $F$, we conclude by Lemma \ref{lemma:two-extreme-points} that~$F$ is strictly convex, and it follows from Theorem \ref{theorem:strictly-convex} that $F$ is Lorentzian.
\end{proof}

We also have the following proposition:

\begin{prop} \label{prop:two-lorentzian-faces}
Let $\CC$ be a proper cone such that $(\CC_\square,\CC)$ has the LFP and let $F_1 \neq F_2$ be two strictly convex faces of $\CC$. Then, we have 
\[
\mathspan(F_1 \cap F_2) = \mathspan(F_1) \cap \mathspan(F_2),
\]
and 
\[ \dim (\mathspan(F_1) \cap \mathspan(F_2)) = \dim(F_1 \cap F_2) \leq 1 .\]
In particular, we have 
\[
\dim(F_1) + \dim(F_2) \leq \dim(\CC)+1 .
\]
\end{prop}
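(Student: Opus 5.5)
The plan is to reduce everything to the claim $\mathspan(F_1)\cap\mathspan(F_2)\subseteq \mathspan(F_1\cap F_2)$; the reverse inclusion is trivial. Two preliminary observations are needed.

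First, for any face $F$ of $\CC$ we have $F=\CC\cap\mathspan(F)$. Indeed, if $v\in\CC$ and $v=a-b$ with $a,b\in F$, then $a=v+b$ with $v,b\in\CC$, so $v\in F$ by the face property. Consequently
\[ \CC\cap \mathspan(F_1)\cap\mathspan(F_2)=F_1\cap F_2 . \]

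Second, $F_1\cap F_2$ is a face of both $F_1$ and $F_2$. Since $F_1\neq F_2$, it is a proper face of at least one of them. A proper face of a strictly convex cone is either $\{0\}$ or an extremal ray, so $\dim(F_1\cap F_2)\leq 1$. Once the span identity is established, the last inequality follows from
\[ \dim(\mathspan F_1+\mathspan F_2)=\dim F_1+\dim F_2-\dim(\mathspan F_1\cap\mathspan F_2)\leq \dim \CC . \]

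For the main claim, suppose by contradiction that $w\in \mathspan(F_1)\cap\mathspan(F_2)$ and $w\notin\mathspan(F_1\cap F_2)$. By the first observation, $w\notin \CC$ and $-w\notin\CC$; otherwise $\pm w\in F_1\cap F_2$. Apply Lemma~\ref{lemma:basic-strictlyconvex}(i) inside the strictly convex cone $F_1$ (in its span) and again inside $F_2$. This gives $w=x-y$ with $x,y$ extremal and non-collinear in $F_1$, and $w=x'-y'$ with $x',y'$ extremal and non-collinear in $F_2$. All four are extremal in $\CC$, since extremal rays of a face are extremal in $\CC$. Rearranging gives $x+y'=x'+y$, which suggests considering the map
\[ T=\Sqmap{x}{x'}{y}{y'} . \]
If the four vectors are pairwise non-collinear, Corollary~\ref{cor:extremality-conditions} says $T$ is extremal in $\Pos(\CC_\square,\CC)$. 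Its rank is $3$: a $2$-dimensional span would be a $2$-dimensional cone containing four non-collinear extremal rays, which is impossible. Lemma~\ref{lemma:ellipse-sandwiching} then gives $T(\CC_\square)\subseteq\face(x+y)\subseteq F_1$. Hence $x',y'\in F_1\cap F_2$, and so $w=x'-y'\in\mathspan(F_1\cap F_2)$, a contradiction.

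The main obstacle is the degenerate case where some of the four vectors are collinear. By construction, $x\not\parallel y$ and $x'\not\parallel y'$. A collinearity across the two pairs, say $x\parallel x'$ or $y\parallel y'$, forces the common vector into $F_1\cap F_2$, which is at most a single ray $\R_+e$. The pairs $x\parallel y'$ and $y\parallel x'$ can be handled in the same way or excluded from the relation by a dimension count. I would first try to perturb the representations so that they avoid the ray $\R_+e$. For instance, replace $w$ by $w+te$ for small $t$, which still lies outside $\mathspan(F_1\cap F_2)\supseteq\R e$ and outside $\pm\CC$, and use the freedom in Lemma~\ref{lemma:basic-strictlyconvex}(i) to choose different decompositions. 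If needed, Lemma~\ref{lemma:basic-strictlyconvex}(ii) can rewrite a sum of two extremals as $\lambda c+d$ with $d$ a fresh extremal, which breaks any coincidence with $e$. Since only one bad ray has to be avoided and the decompositions vary in a family of positive dimension, a generic choice should make all four vectors pairwise non-collinear, and the argument above then applies.
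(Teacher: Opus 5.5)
Your overall route is the paper's: argue by contradiction with $w\notin\pm\CC$, decompose $w=x-y=x'-y'$ via Lemma~\ref{lemma:basic-strictlyconvex}(i), and apply Corollary~\ref{cor:extremality-conditions} and Lemma~\ref{lemma:ellipse-sandwiching} to $\sqmap{x}{x'}{y}{y'}$. Your two preliminary observations and the final dimension count are correct.

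The gap is the degenerate case. You flag it as the main obstacle but address it only with a perturbation/genericity heuristic, which is not an argument. Nothing guarantees that the new decompositions avoid collinearity. Moreover, when $F_1$ or $F_2$ is $2$-dimensional, the decomposition from Lemma~\ref{lemma:basic-strictlyconvex}(i) is unique, so there is no ``family of positive dimension'' to choose from. Your claim that $x\parallel y'$ can be ``excluded by a dimension count'' is likewise unsupported.

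No perturbation is needed. The relation $x+y'=x'+y$, together with extremality of all four vectors in $\CC$, rules out every collinearity except $x=x'$, $y=y'$:
\begin{itemize}
\item If $x\parallel y'$, then $x'+y=x+y'\in\R_+y'$, so extremality of $y'$ forces $x'\parallel y'$, which is impossible. The case $y\parallel x'$ is symmetric.
\item If $x=\lambda x'$ with $\lambda>1$, then $y=(\lambda-1)x'+y'$, and extremality of $y$ forces $x'\parallel y'$. If $\lambda<1$, then $y'=(1-\lambda)x'+y$, and extremality of $y'$ forces $x'\parallel y'$. The case $y\parallel y'$ is handled the same way.
\end{itemize}
So the only degenerate possibility is $x=x'$, $y=y'$. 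Then $x,y\in F_1\cap F_2$, and $w=x-y\in\mathspan(F_1\cap F_2)$ directly, which is the contradiction you want. The paper phrases this step as $F_1=\face(x+y)=F_2$. Replacing your last paragraph with this short extremality check completes the proof.
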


\begin{proof}

Clearly, $\mathspan(F_1 \cap F_2) \subseteq \mathspan(F_1) \cap \mathspan(F_2)$. Conversely, let $z \in \mathspan(F_1) \cap \mathspan(F_2)$. We will show that $z \in \CC$ or $-z\in \CC$, which immediately implies $z\in \mathspan(F_1 \cap F_2)$. Assume for contradiction that neither $z$ nor $-z$ belongs to $\CC$. Then, by Lemma \ref{lemma:basic-strictlyconvex}, there are elements $x_1,y_1$ extremal in $F_1$ and elements $x_2,y_2$ extremal in $F_2$ such that $z=x_1-y_1=x_2-y_2$. Note that if $x_1$ and $x_2$ were collinear, extremality would imply $x_1=x_2$, $y_1=y_2$ and thus $F_1=F_2$ since both faces are strictly convex. In a similar way, it can be shown that all vectors $x_1,x_2,y_1,y_2$ are pairwise non-collinear. The positive map $P=\sqmap{x_1}{x_2}{y_1}{y_2}$ has rank $3$ and (by Corollary~\ref{cor:extremality-conditions}) is extremal in $\Pos(\CC_\square,\CC)$. Then, Lemma \ref{lemma:ellipse-sandwiching} implies  that $x_1,y_1\in P(\CC_{\square})\subseteq \face(x_2+y_2)=F_2$ implying that $F_1=\face(x_1+y_1)\subseteq F_2$. A symmetric argument shows that $F_2\subseteq F_1$ hence $F_1=F_2$, contradicting the assumptions. 

We proved that $\mathspan(F_1 \cap F_2) = \mathspan(F_1) \cap \mathspan(F_2)$. If $\dim (F_1 \cap F_2) > 1$, then $F_1 \cap F_2$ would contain two elements $z_1,z_2$ which are linearly independent and extremal in $\CC$. Strict convexity implies that $\face(z_1+z_2)=F_1$ and $\face(z_1+z_2)=F_2$, hence $F_1=F_2$, a contradiction.
\end{proof}

Let us finish this section with a proposition. We denote by $\SCF(\CC)$ the set of strictly convex faces $F \subseteq \CC$ such that $\dim(F) \geq 3$. The following proposition shows that if some pair of elements of $\SCF(\CC)$ have a nonzero intersection, then there are many such pairs.

\begin{prop}\label{prop:Crucial}
Let $\CC$ be a proper cone such that $(\CC_\square,\CC)$ has the LFP, and let $F \neq F'$ elements of $\SCF(\CC)$ such that $F\cap F'$ contains a nonzero element $z$. 
\begin{enumerate}
\item[(i)] Consider elements $x$ extremal in $F$ and $x'$ extremal in $F'$. If $x$ and $x'$ are non-collinear, then $F_{x,x'} \coloneqq \face(x+x') \in \SCF(\CC)$.
\item[(ii)] Consider elements $x,y$ extremal in $F$ and elements $x',y'$ extremal in $F'$. If $x,x',y,y',z$ are pairwise non-collinear,  then
\[ \dim \left( F_{x,x'} \cap F_{y,y'} \right) = 1 \]
\end{enumerate}
\end{prop}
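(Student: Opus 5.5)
The plan rests on two facts about $F$ and $F'$ established earlier. Since $(\CC_\square,F)$ and $(\CC_\square,F')$ have the LFP (Proposition~\ref{prop:LFP-passesTo}), Theorem~\ref{theorem:strictly-convex} shows that $F$ and $F'$ are Lorentzian of dimension $\geq 3$. By Proposition~\ref{prop:two-lorentzian-faces}, $F\cap F'=\R_+z$ and $\mathspan(F)\cap\mathspan(F')=\R z$. Moreover, $z$ is extremal in $\CC$, since $\R_+z$ is a one-dimensional face. Throughout, I will use that in a Lorentzian face of dimension $\geq 3$ the sum of two non-collinear extremal elements lies in its relative interior. Hence such a sum generates the whole face.

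\textbf{Part (i).} By Proposition~\ref{prop:SomeFacesLorentz}, $F_{x,x'}$ is Lorentzian and in particular strictly convex, so only $\dim F_{x,x'}\geq 3$ needs proof.

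If $x$ is collinear with $z$, then $x,x'$ are non-collinear extremal elements of $F'$. Hence $F_{x,x'}=F'\in\SCF(\CC)$, and symmetrically if $x'$ is collinear with $z$.

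Otherwise $x,x',z$ are pairwise non-collinear extremal elements of $\CC$, and I apply Proposition~\ref{proposition:three-extremals} to $V=\mathspan\{x,x',z\}$. If $\CC\cap V$ is Lorentzian, then $x+x'$ lies in its relative interior. So $F_{x,x'}\supseteq \CC\cap V$, which is $3$-dimensional, and we are done.

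The remaining case is that $\CC\cap V$ is classical, equal to $\cone\{x,x',z\}$; excluding this is \textbf{the main obstacle}. I would first try to produce a rank-$3$ extremal map $\sqmap{x_1}{x_2}{x_3}{x_4}$ through an auxiliary point and apply Lemma~\ref{lemma:ellipse-sandwiching} or Lemma~\ref{lemma:face_with_no_EB}. Concretely, choose an extremal element $w\in F$ with $x,z,w$ spanning a $3$-dimensional Lorentzian slice of $F$. After rescaling (Lemma~\ref{lemma:rescaling}) this gives a relation $x+a=z+b$ with $a,b$ extremal in $F$. I would then combine it with $x'$ to obtain a square relation whose four points have pairwise trivial faces. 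The goal is to force $x'$ into $\face(x+z)=F$ via Lemma~\ref{lemma:face_with_no_EB}. That would contradict $F\cap F'=\R_+z$, since $x'$ is not collinear with $z$. If no such relation can be arranged, I would instead argue with Proposition~\ref{prop:two-lorentzian-faces}: apply it to $F$ and a suitable strictly convex face $\face(x'+u)$ with $u$ extremal in $F$, and show that the classical configuration forces two such faces to meet in dimension $\geq 2$.

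\textbf{Part (ii).} By (i), $F_{x,x'},F_{y,y'}\in\SCF(\CC)$. Proposition~\ref{prop:two-lorentzian-faces} then gives $\dim(F_{x,x'}\cap F_{y,y'})\leq 1$ once the two faces are known to be distinct.

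For distinctness: if $F_{x,x'}=F_{y,y'}=G$, then $G$ contains $x,y$, hence $\face(x+y)=F$. It also contains $x',y'$, hence $F'$. Then $F\cap G$ and $F'\cap G$ would have dimension $\geq 3$, contradicting Proposition~\ref{prop:two-lorentzian-faces} unless $F=G=F'$, which is excluded.

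For the lower bound I must exhibit a nonzero common element. The natural candidates come from the relation $\mathspan F\cap\mathspan F'=\R z$. Since $x-y\in\mathspan F$ and $x'-y'\in\mathspan F'$, and $F,F'$ are Lorentzian, I would use Lemma~\ref{lemma:basic-strictlyconvex} to rescale and write a relation $\lambda x+\mu y'=\lambda' y+\mu' x'$, possibly involving $z$. This gives a square map $\sqmap{\lambda x}{\lambda' y}{\mu' x'}{\mu y'}$, which is extremal by Corollary~\ref{cor:extremality-conditions}, with $\lambda x+\mu y'=\lambda' y+\mu' x'$. Lemma~\ref{lemma:ellipse-sandwiching} then places its image, a $3$-dimensional Lorentzian cone, inside both $\face(x+x')=F_{x,x'}$ and $\face(y+y')=F_{y,y'}$. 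Any nonzero vector of that image then lies in the intersection. Finding such a relation, with the scalars positive and the points pairwise non-collinear, is where the non-collinearity hypotheses on $x,x',y,y',z$ enter, and is the technical heart of (ii).
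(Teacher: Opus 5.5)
Your plan has a genuine gap at the two steps you flag yourself, the ``main obstacle'' in (i) and the ``technical heart'' in (ii), and the route you sketch cannot be completed in either.

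\textbf{Part (i).} The classical case is not a degenerate configuration to be excluded; it is the generic one. Take $\CC=\PSD_3(\R)$, for which $(\CC_\square,\CC)$ has the LFP by Theorem~\ref{thm:symmetric-cones-LFP}. Let $F$, $F'$ be the faces of matrices with range in $\mathspan(e_1,e_2)$ and $\mathspan(e_2,e_3)$, and let $z=e_2e_2^{T}$. For any admissible $x=vv^{T}$ and $x'=v'v'^{T}$ not collinear with $z$, the vectors $v,v',e_2$ are linearly independent. So $\CC\cap\mathspan\{x,x',z\}$ is classical, by the Example following Proposition~\ref{proposition:three-extremals}. Yet $F_{x,x'}$, the matrices with range in $\mathspan(v,v')$, is $3$-dimensional. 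There is therefore no contradiction to derive. Your first fallback would prove $x'\in F$, which is false here. Proposition~\ref{proposition:three-extremals} says nothing about $\dim F_{x,x'}$ in this case, so a different input is needed.

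\textbf{Part (ii).} The relation $\lambda x+\mu y'=\lambda' y+\mu' x'$ with positive scalars can never hold. It gives $\lambda x-\lambda' y\in\mathspan(F)\cap\mathspan(F')=\R z$, a linear dependence among three pairwise non-collinear extremal rays of the Lorentzian cone $F$, which is impossible. Worse, if it did hold, Lemma~\ref{lemma:ellipse-sandwiching} would put a $3$-dimensional cone inside $F_{x,x'}\cap F_{y,y'}$, contradicting the bound $\leq 1$ you just proved. So the direct route is self-defeating.

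The missing idea is to argue by contradiction and feed the assumption $F_{x,x'}\cap F_{y,y'}=\{0\}$ into Lemma~\ref{lemma:face_with_no_EB}, using a square whose entries are not all extremal:
\begin{itemize}
\item By Lemma~\ref{lemma:basic-strictlyconvex}(ii) in $F$ and in $F'$, after rescaling $y$ and $x'$, write $x+z=w+y$ and $y'+z=w'+x'$. Here $w$, $w'$ are extremal and non-collinear with the other points. This gives $w+(y+y')=w'+(x+x')$.
\item For $\sqmap{w}{w'}{x+x'}{y+y'}$, the four intersections $\face(w)\cap\face(w')$, $\face(w)\cap F_{x,x'}$, $\face(w')\cap F_{y,y'}$ and $F_{x,x'}\cap F_{y,y'}$ are trivial. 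The middle two follow from $F_{x,x'}\cap F=\R_+x$ and $F_{y,y'}\cap F'=\R_+y'$ (Proposition~\ref{prop:two-lorentzian-faces}); the last is the assumption.
\item The lemma then gives $y+y'\in F_{w,w'}$, hence $F_{y,y'}=F_{w,w'}$. Intersecting with $F$ forces $w$ to be collinear with $y$, a contradiction.
\end{itemize}

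Part (i) then follows from (ii) rather than from Proposition~\ref{proposition:three-extremals}. If $F_{x,x'}=\cone\{x,x'\}$ were $2$-dimensional, a nonzero intersection with $F_{y,y'}$ would be $\R_+x$ or $\R_+x'$. That would force $x\in F\cap F_{y,y'}=\R_+y$ or $x'\in F'\cap F_{y,y'}=\R_+y'$. Your separate handling of the case where $x$ or $x'$ is collinear with $z$ is still needed, since it is what lets you choose such $y,y'$; the paper passes over that point quickly.
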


\begin{proof}
By Proposition \ref{prop:SomeFacesLorentz}, each face $F_{x,x'}$ is Lorentzian. We postpone the proof that $\dim (F_{x,x'}) \geq 3$ and first prove (ii).

Assume as in (ii) that $x,x',y,y',z$ are pairwise non-collinear. By Proposition~\ref{prop:two-lorentzian-faces}, we have $F_{x,x'} \cap F = \R_+x$ and $F_{y,y'} \cap F = \R_+y$ and therefore $F_{x,x'} \neq F_{y,y'}$. Proposition~\ref{prop:two-lorentzian-faces} also implies that $\dim (F_{x,x'} \cap F_{y,y'}) \leq 1$. Let us assume for contradiction that $F_{x,x'} \cap F_{y,y'} = \{0\}$. As $F$ and $F'$ are Lorentzian, by Lemma \ref{lemma:basic-strictlyconvex}(ii) there exist $\lambda,\lambda' > 0$, elements $w$ extremal in $F$ and $w'$ extremal in $F'$ such that
\[
x+z = w+ \lambda y \quad \text{ and }\quad y'+z = w'+\lambda' x'.
\]
Up to rescaling $x'$ and $y$ (and since $F_{x,x'}=F_{x,\lambda' x'}$) we may assume that $\lambda = \lambda'=1$. By extremality of the involved points, $w$ cannot be collinear with $x$, $y$ or $z$; and $w'$ cannot be collinear with $x'$, $y'$ or $z$. Combining these two equations, we find that 
\[
w+y+y'=w'+x+x' .
\]
We have
\begin{align*}
\face(w)\cap F_{x,x'} &=\{ 0 \},\\
\face(w')\cap F_{y,y'} &=\{ 0 \},\\
\face(w)\cap \face(w') &=\{ 0 \}, \\ 
F_{x,x'} \cap F_{y,y'} &=\{ 0 \}.
\end{align*}

Here, the first equation follows from the fact that $F_{x,x'}$ is Lorentzian (see Proposition~\ref{prop:SomeFacesLorentz}) and hence cannot intersect the face $F$ in another ray besides $\R_+ y$, and the second equation follows similarly. The third equation is clear and the fourth equation is our assumption. Lemma \ref{lemma:face_with_no_EB} implies that $y+y' \in F_{w,w'}$ and thus that $F_{y,y'} \subseteq F_{w,w'}$. Since both faces are Lorentzian, this implies $F_{y,y'}=F_{w,w'}$. Taking the intersection with $F$, we obtain that $y$ and $w$ are proportional, which is a contradiction.

The previous argument shows that $F_{x,x'} \cap F_{y,y'} \neq \{0\}$. As $x,x',y,y'$ are pairwise non-collinear, we would have $F_{x,x'} \cap F_{y,y'} = \{0\}$ if $\dim(F_{x,x'})=2$ or $\dim(F_{y,y'})=2$. Hence, we conclude that these faces have dimension at least $3$.
\end{proof}

\subsection{When \texorpdfstring{$\CC$}{C} is symmetric: Proof of Theorem \ref{thm:symmetric-cones-LFP}}

By Vinberg's classification of symmetric cones (see~\cite{vinberg1963theory}) and Proposition \ref{prop:LFP-passesTo}, it is enough to show that $(\CC_{\square},\CC)$ has the LFP when $\CC$ is Lorentzian, a cone of positive semidefinite $n\times n$ matrices over $\R,\C$ or $\quat$, or the cone $\PSD_3(\Oct)$. The first case is clear. Next, let us consider the case $\CC=\PSD_n(\C)$ for some $n\in\N$. Let $P\in \Pos(\CC_{\square},\PSD_n(\C))$ denote an extremal positive map given as 
\[
P= \Sqmap{A_1}{A_2}{A_3}{A_4},
\]
with $A_i\in \PSD_n(\C)$ for $i\in\lset 1,2,3,4\rset$.
If $\rank(P)=1$, then $P$ factors through Lorentzian cones. By Lemma~\ref{lemma:rank-not-2}, we have $\rank(P)=3$ if $\rank(P)\neq 1$. In this case, by~\cite[Example 11]{jenvcova2018incompatible}, we have $\rank(A_i)=1$ for all $i\in\lset 1,2,3,4\rset$. As $\rank(A_1+A_4)=2$, the face $\face(A_1+A_4)$ is Lorentzian as it is isomorphic to $\PSD_2(\C)\simeq \L_{4}$. Since $P(\CC_\square)\subseteq \face(A_1+A_4)$, we conclude that $P$ factors through Lorentzian cones. As all extremal maps in $\Pos(\CC_{\square},\PSD_n(\C))$ factor through Lorentzian cones, we have shown that $(\CC_\square, \PSD_n(\C))$ has the LFP. By Proposition \ref{prop:LFP-passesTo}, it follows that $(\CC_\square, \PSD_n(\R))$ has the LFP for every $n\in\N$ as $\PSD_n(\R)$ is a retract of $\PSD_n(\C)$. It is not difficult to show that $\PSD_n(\quat)$ is a retract of $\PSD_{2n}(\C)$ and by Proposition \ref{prop:LFP-passesTo} also $(\CC_\square, \PSD_n(\quat))$ has the LFP for every $n\in \N$. 

The only remaining case is $\CC=\PSD_3(\Oct)$, which is not a retract of $\PSD_n(\R)$ for any $n\in\N$. We will use the structure of faces of the cone $\PSD_3(\Oct)$, which is very similar to the structure of faces of $\PSD_3(\R)$. Most importantly, any element of $\PSD_3(\Oct)$ is equivalent to an idempotent by the action of an automorphism~\cite{baez2002octonions}. The trace of this idempotent defines the rank of elements in $\PSD_3(\Oct)$ taking values in $\lset 0,1,2,3\rset$. As shown in~\cite[Lemma 3]{labardini2012cones}, we have:
\begin{enumerate}
\item A non-zero element $A\in \PSD_3(\Oct)$ is extremal if and only if $\rank(A)=1$.
\item If $A,B\in \PSD_3(\Oct)$ have $\rank(A)=\rank(B)= 1$, then $\face(A+B)$ is isomorphic to $\PSD_2(\Oct)\simeq \L_{10}$ and hence Lorentzian. 
\item If $A,B\in \PSD_3(\Oct)$ have $\rank(A)=\rank(B)= 2$, then $\face(A)$ and $\face(B)$ are isomorphic to $\PSD_2(\Oct)$, and we have $\face(A)\cap \face(B)\neq \lset 0\rset$.
\end{enumerate}
With these properties, we can prove that $(\CC_\square,\PSD_3(\Oct))$ has the LFP. Consider an extremal positive map $P\in \Pos(\CC_{\square},\PSD_3(\Oct))$ given by
\[
P= \Sqmap{A_1}{A_2}{A_3}{A_4},
\]
with $A_i\in \PSD_3(\Oct)$ for $i\in\lset 1,2,3,4\rset$. As before, we may consider the case where $\rank(P)=3$. Consider the subspaces $E_i=\mathspan(\face(A_i))$ for $i\in\lset 1,2,3,4\rset$. By Proposition~\ref{prop:extremality-conditions}, we have $E_1\cap E_2 =\lset 0\rset$ and $E_1\cap E_3 =\lset 0\rset$. If $\rank(A_1)=2$, then we have $\rank(A_2)=\rank(A_3)=1$ and $P(\CC_\square)$ is contained in the Lorentzian face $\face(A_2+A_3)$. We reach the same conclusion if $\rank(A_4)=2$. Finally, if $\rank(A_1)=\rank(A_4)=1$, then $P(\CC_\square)$ is contained in the Lorentzian face $\face(A_1+A_4)$. These cover all cases, and we conclude that $P$ factors through Lorentzian cones. As all extremal maps in $\Pos(\CC_{\square},\PSD_3(\Oct))$ factor through Lorentzian cones, we have shown that $(\CC_\square, \PSD_3(\Oct))$ has the LFP.

\subsection{When \texorpdfstring{$\dim(\CC)\leq 5$}{dim(C) at most 5}: Proof of Theorem \ref{theorem:dimension5}}

Let $\CC$ denote a proper cone for which $(\CC_\square,\CC)$ has the LFP. By Proposition \ref{prop:Crucial}, the facial structure of $\CC$ has to be complicated if there exists a pair of Lorentzian faces with dimension greater than~$3$ and intersecting in an extremal ray of $\CC$. The example of the $6$-dimensional cone $\CC=\PSD_3(\R)$ for which $(\CC_\square,\CC)$ has the LFP by Theorem \ref{thm:symmetric-cones-LFP} with uncountably many $3$-dimensional Lorentzian faces shows that this property can be realized. We will now show that dimension $6$ is the smallest dimension for which such an example exists. This leads to an easy characterization of proper cones $\CC$ of dimension at most $5$ for which $(\CC_\square,\CC)$ has the LFP. We start with a proposition:

\begin{prop} \label{prop:at-most-one-lorentz-face}
Let $\CC$ be a proper cone such that $(\CC_\square,\CC)$ has the LFP. If $\dim(\CC) \leq 5$, then $\CC$ has at most one Lorentzian face $F$ such that $\dim(F)\geq 3$.     
\end{prop}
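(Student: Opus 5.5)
Suppose $F\neq F'$ are two Lorentzian faces of $\CC$ with $\dim(F),\dim(F')\geq 3$. Both are strictly convex, so they belong to $\SCF(\CC)$. The plan is to derive a contradiction from the dimension bound $\dim(\CC)\leq 5$, using Proposition~\ref{prop:two-lorentzian-faces} and Proposition~\ref{prop:Crucial}.

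First, Proposition~\ref{prop:two-lorentzian-faces} gives $\dim(F)+\dim(F')\leq \dim(\CC)+1\leq 6$. Hence $\dim(F)=\dim(F')=3$, $\dim(\CC)=5$, and $\dim(F\cap F')=1$. In particular $F\cap F'=\R_+z$ for some nonzero $z$, which is extremal in both faces.

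Next, I invoke Proposition~\ref{prop:Crucial}. Since $F$ and $F'$ are $3$-dimensional Lorentzian faces, each has infinitely many extremal rays. So I can choose $x,y$ extremal in $F$ and $x',y'$ extremal in $F'$ such that $x,x',y,y',z$ are pairwise non-collinear. Part (i) then says that $G\coloneqq F_{x,x'}$ and $H\coloneqq F_{y,y'}$ lie in $\SCF(\CC)$, so they have dimension at least $3$. They are distinct, since $G\cap F=\R_+x$ while $H\cap F=\R_+y$. Part (ii) says that $G\cap H$ is a ray.

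Now I apply the same dimension count to the new pairs. From $(G,F)$ we get $\dim G\leq 3$, hence $\dim G=3$, and likewise $\dim H=3$. Spans are also controlled by Proposition~\ref{prop:two-lorentzian-faces}: $\mathspan G\cap \mathspan F=\R x$, $\mathspan G\cap\mathspan F'=\R x'$, and similarly for $H$. Write $W=\mathspan(F)+\mathspan(F')$. This space has dimension $3+3-1=5$, so $W$ is the whole ambient space.

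The key step, which I expect to be the main obstacle, is to produce a contradiction from the fact that the $3$-dimensional space $\mathspan G$ meets each of $\mathspan F$ and $\mathspan F'$ in a line, and similarly for $H$. My first attempt is to vary the choices rather than attack a single configuration. Fix $x,x'$ and let $y$ range over the infinitely many extremal rays of $F$ (and $y'$ over those of $F'$). This produces infinitely many $3$-dimensional Lorentzian faces $F_{y,y'}$, each meeting $G$ in exactly one ray. Distinct such faces meet pairwise in at most a ray, and each of them meets $F$ and $F'$ only in the rays through $y$ and $y'$.

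To make this concrete, I would work in the quotient $\R^5/\mathspan\{x,x'\}\simeq\R^3$. By Proposition~\ref{prop:two-lorentzian-faces}, $\mathspan G\cap\mathspan F_{y,y'}$ is the span of a ray of $G$ not in $F\cup F'$. Projecting along $\mathspan G$ and counting dimensions should force some two of these faces to intersect in a $2$-dimensional space, which contradicts Proposition~\ref{prop:two-lorentzian-faces}.

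If the counting argument fails, I would fall back on Lemma~\ref{lemma:face_with_no_EB}. The idea is to use a relation among $x,y,x',y'$ and a third extremal vector in $G$ to force $G\subseteq\face(\cdot)$ of some Lorentzian face meeting $F$ in two rays, which is again impossible. That step is the delicate one.
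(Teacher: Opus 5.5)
\textbf{There is a genuine gap: the contradiction itself is never produced.} Your reduction is correct and builds exactly the configuration the paper uses:
\begin{itemize}
\item $\dim\CC=5$;
\item two $3$-dimensional Lorentzian faces $F,F'$ with $F\cap F'=\R_+z$;
\item via Proposition~\ref{prop:Crucial}, two $3$-dimensional faces $G=F_{x,x'}$ and $H=F_{y,y'}$ in $\SCF(\CC)$ with $G\cap H=\R_+w$ a ray.
\end{itemize}
Your proof stops exactly where the contradiction has to come from, and the route you sketch cannot supply it. Pure dimension counting gives no obstruction. Any two $3$-dimensional subspaces of $\R^5$ meet in at least a line, and generically in exactly a line. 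So an infinite family of $3$-planes is consistent linear algebra if each one contains $y$ and $y'$, meets $\mathspan G$ in a line through a point of $G$, and meets the others pairwise in lines. Passing to $\R^5/\mathspan\{x,x'\}$ does not change this. The contradiction has to come from positivity and from using the LFP hypothesis once more. Your fallback via Lemma~\ref{lemma:face_with_no_EB} points that way, but it names no relation. For the relation that actually has to be ruled out, the natural grouping violates that lemma's hypothesis $F_2\cap F_4=\{0\}$, as shown below.

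\textbf{The missing argument.} One configuration suffices; you do not need to vary $y,y'$.
\begin{enumerate}
\item \emph{A unique relation.} View $F,F',G,H$ as the vertices of $K_4$, and the six pairwise non-collinear extremal vectors $z,x,y,x',y',w$ as its edges. The vectors $z,x,y$ span $\mathspan F$ and $z,x',y'$ span $\mathspan F'$, so $z,x,y,x',y'$ is a basis of $\R^5$. Hence the six vectors satisfy a unique linear relation.
\item \emph{Sign pattern.} Extremality and pointedness rule out at most one coefficient of either sign. Since $\CC$ is not strictly convex, Lemma~\ref{lemma:two-extreme-points} gives $\face(a+b)\neq\CC$ for extremal $a,b$. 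So $\face(a+b)$ is either $2$-dimensional or lies in $\SCF(\CC)$. By Proposition~\ref{prop:two-lorentzian-faces}, each of $F,F',G,H$ contains exactly three of the six vectors, and any other face in $\SCF(\CC)$ contains at most two. This rules out exactly two coefficients of either sign, so there are three positive and three negative coefficients.
\item \emph{Star versus triangle.} One possible $3$--$3$ split is a star against the complementary triangle. It is impossible: for example, $z+x+y=x'+y'+w$ would put $x'$ in $F\cap F'=\R_+z$.
\item \emph{Path versus path.} The other split is two paths, say $z+x'+w=x+y+y'$ after rescaling. Consider the face of $\Pos(\CC_\square,\CC)$ of maps $\sqmap{a}{b}{c}{d}$ with $a\in\R_+z$, $b\in\R_+x$, $c\in H$, $d\in G$. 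It contains $\sqmap{z}{x}{y+y'}{x'+w}$.
\begin{itemize}
\item A rank-$3$ extremal element would, by Lemma~\ref{lemma:ellipse-sandwiching}, have range in $\face(a+b)=F$. This forces $c\in\R_+y$ and $d\in\R_+x$, giving a nontrivial relation among the independent vectors $z,x,y$.
\item So the face is generated by its rank-one elements, which are only $\sqmap{0}{x}{0}{x}$ and $\sqmap{0}{0}{w}{w}$. The first exists because $x\in G=\face(x'+w)$, which is exactly why Lemma~\ref{lemma:face_with_no_EB} does not apply directly.
\item Neither rank-one element can produce a map whose upper-left entry is $z\neq 0$, which is the contradiction.
\end{itemize}
\end{enumerate}
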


\begin{proof}
If $\dim (\CC) \leq 4$, the result is immediate from Proposition~\ref{prop:two-lorentzian-faces}. We now consider the case when $\dim(\CC) =5$. Let $\SCF(\CC)$ be the set of strictly convex faces of~$\CC$ of dimension $\geq 3$. Let us assume for contradiction that $|\SCF(\CC)|>1$. This condition implies that $\CC$ is not strictly convex, since the faces of a strictly convex cone are either extreme rays or the full cone. Furthermore, we have the following:
\begin{itemize}
    \item By Proposition~\ref{prop:two-lorentzian-faces}, all faces in $\SCF(\CC)$ are Lorentzian of dimension $3$.
    \item For distinct faces $F,F'\in \SCF(\CC)$, the subspaces $\mathspan(F)$ and $\mathspan(F')$ intersect. Therefore, by Proposition~\ref{prop:two-lorentzian-faces}, the intersection $F \cap F'$ is an extremal ray of~$\CC$.
    \item By Lemma~\ref{lemma:two-extreme-points}, whenever $x,y$ are extremal in $\CC$, we have $x +y \in \partial \CC$ and thus $\face(x+y) \neq \CC$.
\end{itemize}

Pick two distinct faces $F_1,F_2\in \SCF(\CC)$ and let $x_{12}\in F_1\cap F_2$ be non-zero. As $\mathspan (F_1\cup F_2)=\R^5$ we may choose non-zero elements $x_{13},x_{14}$ extremal in $F_1$ and non-zero elements $x_{23},x_{24}$ extremal in $F_2$ such that $x_{12},x_{13},x_{14},x_{23},x_{24}$ are linearly independent and span $\R^5$. Moreover, we may choose these vectors such that (see the proof of Proposition \ref{prop:Crucial}) the faces $F_3=\face(x_{13}+x_{23})$ and $F_4=\face(x_{14}+x_{24})$ belong to $\SCF(\CC)$. Finally, let $x_{34}$ be non-zero and extremal such that $F_3\cap F_4=\R_+ x_{34}$. By construction, the faces $F_1,F_2,F_3,F_4$ are pairwise distinct and the $6$ vectors $(x_{ij})_{1 \leq i < j \leq 4}$ are pairwise non-collinear. As the dimension of the ambient space is $5$, there is a linear combination $(\lambda_{ij})_{1\leq i < j \leq 4}$ such that $\sum \lambda_{ij} x_{ij} = 0$. Since the face generated by two extremal elements is not equal to $\CC$, the family $\lambda_{ij}$ must contain $3$ positive elements and $3$ negative elements. After relabeling the faces and rescaling the vectors $x_{ij}$, we have one of the following relations
\begin{align*} x_{12} + x_{13} + x_{14} &= x_{23} + x_{24} + x_{34} \\
 x_{12} + x_{23} + x_{34} &= x_{13} + x_{24} + x_{14} .
\end{align*}
The first relation leads to an immediate contradiction since all vectors in the left-hand side belong to $F_1$. Assume the second relations holds and consider the set
 \[ H = \left\{ \sqmap{a}{b}{c}{d} \st a+d=b+c, \ a \in \R_+x_{12}, \ b \in \R_+x_{13}, \ c \in F_4,\ d \in F_3 \right\} \]
 which is a face of $\Pos(\CC_\square, \CC)$. Consider an element $T=\sqmap{\mu_{12}x_{12}}{\mu_{13}x_{13}}{c}{d}$ extremal in~$H$, with $\mu_{12},\mu_{13}>0$, $c\in F_4$ and $d\in F_3$. If $\rank(T) = 3$, by Lemma \ref{lemma:ellipse-sandwiching}, the range of $T$ is contained in $F_1= \face(x_{12}+x_{13})$, and we have $c=\mu_{14}x_{14}$ and $d=\mu'_{13} x_{13}$ for some $\mu_{14},\mu'_{13} >0$. In particular, we would find the relation 
 \[
 \mu_{12}x_{12}+\mu'_{13} x_{13} = \mu_{13}x_{13} + \mu_{14}x_{14} ,
 \]
 contradicting that $x_{12}$ and $x_{14}$ are both extremal. Since $H$ does not contain an extremal rank $3$ map, it is spanned by the rank $1$ maps $\sqmap{0}{x_{13}}{0}{x_{13}}$ and $\sqmap{0}{0}{x_{34}}{x_{34}}$. Since
 \[ \Sqmap{x_{12}}{x_{13}}{x_{14}+x_{24}}{x_{23} + x_{34}}  \in H,\]
we obtain a contradiction.
\end{proof}

\begin{proof}[Proof of Theorem \ref{theorem:dimension5}]
The cases of dimensions $1$ or $2$ are trivial (every cone is symmetric) and the case of dimension $3$ is covered by Proposition \ref{proposition:three-extremals}.

Let $\CC$ be a cone with $n \coloneqq \dim(\CC) \leq 5$ such that $(\CC_\square,\CC)$ has the LFP. We consider different cases; our proof for $n=5$ uses the result for $n=4$.

(i) If $\CC$ is strictly convex, then Theorem \ref{theorem:strictly-convex} implies that $\CC$ is isomorphic to $\L_n$. In the following we assume that $\CC$ is not strictly convex. 

\smallskip

(ii) Suppose that $\CC$ has no strictly convex face of dimension $\geq 3$. Let $x$ and~$y$ be linearly independent and extremal in $\CC$, and let $F=\face(x+y)$. By Proposition~\ref{prop:SomeFacesLorentz} we have $\dim(F)=2$. We claim that~$\CC$ is isomorphic to $\R_+^n$. Otherwise, by~\cite{namioka1969tensor}, there exists an extremal map $T \in \Pos(\CC_\square,\CC)$ of rank $3$. Write $T = \sqmap{x_1}{x_2}{x_3}{x_4}$ and let $E_i \coloneqq \mathspan(\face(x_i))$. By Proposition \ref{prop:extremality-conditions}, we have
\[ \dim(E_1) + \dim(E_2) + \dim(E_3) + \dim(E_4) \leq n+1\]
Since $n+1 \leq 6$, there must exist $1 \leq i < j \leq 4$ such that $\dim(E_i)=\dim(E_j)=1$. In other words, $x_i$ and $x_j$ are extremal in $\CC$. As we observed, the face $F = \face(x_i+x_j)$ has dimension $2$. By Lemma \ref{lemma:ellipse-sandwiching}, it follows that $x_k \in F$ for every $k$, contradicting the fact that $T$ has rank $3$. 

\smallskip

In the remaining cases, by Proposition \ref{prop:at-most-one-lorentz-face}, the cone $\CC$ has a unique strictly convex face $F$, which is Lorentzian and such that $3 \leq \dim(F) \leq n-1$. 

\smallskip

(iii) In the case where $n=4$, we have $\dim(F)=3$. We claim that $\CC$ contains exactly one extremal ray not contained in $F$ and therefore that $\CC \simeq \L_3 \oplus \R_+$. Otherwise, there would exist $x,y \in \CC \setminus F$ linearly independent and extremal in $\CC$. Comparing dimensions implies that $\mathspan \{x,y\} \cap \mathspan(F) \neq 0$. By Lemma \ref{lemma:basic-strictlyconvex}, there exist $w_1,w_2$ either zero or extremal in $F$ and scalars $(\lambda_1,\lambda_2)\neq (0,0)$ such that $\lambda_1 x + \lambda_2 y = w_1-w_2$. Necessarily, the vectors $w_i$ are nonzero and the scalars $\lambda_i$ have opposite signs. Up to rescaling and relabeling, we may assume that $w_1 +x  = w_2 + y$. By Proposition \ref{prop:extremality-conditions}, the positive map $\sqmap{x}{y}{w_2}{w_1}$ is extremal in $\Pos(\CC_\square,\CC)$. Lemma \ref{lemma:ellipse-sandwiching} implies that $x,y \in \face(w_1+w_2)=F$, a contradiction.

\smallskip

(iv) The case where $n=5$ and $\dim(F)=4$ is analogous to case (iii) and here we have $\CC\simeq \L_4\oplus \R_+$.

\smallskip

(v) The remaining case is when $n=5$ and $\dim(F)=3$. We claim that $\CC$ contains exactly two extremal rays not contained in $F$ and therefore that $\CC \simeq \L_3 \oplus \R_+^2$. Otherwise, there would exist $x,y,z \in \CC \setminus F$ linearly independent and extremal in $\CC$.  Comparing dimensions implies that $\mathspan \{x,y,z\} \cap \mathspan(F) \neq \{0\}$. Using Lemma~\ref{lemma:basic-strictlyconvex}, this implies the existence of $w_1,w_2$ either zero or extremal in $F$ such that
\[
\lambda_1 x + \lambda_2 y +\lambda_3 z = w_1-w_2 ,
\]
for some scalars $(\lambda_1,\lambda_2,\lambda_3) \neq (0,0,0)$. The properties of $w_1$ and $w_2$ imply that these scalars cannot be all positive or all negative. Up to permuting and rescaling $x,y,z$, and permuting $w_1, w_2$, we may assume that 
\begin{equation} \label{eq:5vectors}
x+w_2 = w_1 + y + z .
\end{equation}
The extremality of $x$ implies that $w_2 \neq 0$. If $w_1=0$, then the positive map $P = \sqmap{x}{y}{z}{w_2}$ is extremal by Corollary \ref{cor:extremality-conditions}. Using Lemma \ref{lemma:ellipse-sandwiching}, we conclude that $z,w_2\in \face(x+y)$. As $F$ is the only Lorentzian face of dimension $\geq 3$, the face $\face(x+y)$ has to be $2$-dimensional and hence $z$ is proportional to either $x$ or $y$ contradicting the construction of these vectors.

If $w_1 \neq 0$, we may apply Lemma \ref{lemma:face_with_no_EB}. Indeed, since $\face(y+z)$ is $2$-dimensional, the spans of the faces $\face(x)$, $\face(w_2)$, $\face(w_1)$ and $\face(y+z)$ are mutually disjoint by construction of the vectors $x,y,z,w_1$ and $w_2$. Using the relation \eqref{eq:5vectors}, we may apply Lemma \ref{lemma:face_with_no_EB} to conclude that $x,y+z\in \face(w_1+w_2) = F$ contradicting again the construction of these vectors.
\end{proof}

\section{Application: Resilient pairs of proper cones} \label{section:resilence}

The \emph{maximal tensor product} of proper cones $\CC_1\subseteq V_1$ and $\CC_2\subseteq V_2$ is given by
\[
\CC_1\otimes_{\max} \CC_2 = \lset z\in V_1\otimes V_2 :(f_1\otimes f_2)(z)\geq 0 , f_1\in \CC^*_1, f_2\in \CC^*_2\rset ,
\]
and their \emph{minimal tensor product} is given by 
\[
\CC_1\otimes_{\min} \CC_2 = (\CC^*_1\otimes_{\max} \CC^*_2)^* .
\]
Both tensor products are proper cones in the algebraic tensor product $V_1\otimes V_2$. Moreover, they are associative and we may consider tensor powers $\CC^{\otimes_{\max}k}$ and $\CC^{\otimes_{\min}k}$ of a given proper cone $\CC$.

In~\cite{aubrun2023annihilating}, two of the authors studied so-called \emph{entanglement-annihilating maps}. A positive map $P\in \Pos(\CC_1,\CC_2)$ for proper cones $\CC_1$ and $\CC_2$ is called \emph{$(\CC_1,\CC_2)$-entanglement-annihilating} if for every $k\in\N$ we have
\[
P^{\otimes k}\in \Pos(\CC^{\otimes_{\max} k}_1,\CC^{\otimes_{\min} k}_2) .
\]
Examples of $(\CC_1,\CC_2)$-entanglement annihilating maps are the $(\CC_1,\CC_2)$-entanglement-breaking maps, i.e., linear maps that can be expressed as a sum of rank-$1$ positive maps $x\cdot f$ for $x\in \CC_2$ and $f\in\CC^*_1$. We denote the set of $(\CC_1,\CC_2)$-entanglement-annihilating maps by $\EA(\CC_1,\CC_2)$ and the set of $(\CC_1,\CC_2)$-entanglement-breaking maps by $\EB (\CC_1,\CC_2)$. A pair of cones $(\CC_1,\CC_2)$ is called \emph{resilient} if $\EA(\CC_1,\CC_2) = \EB (\CC_1,\CC_2)$. At the time of writing, we do not know of any non-resilient pair of cones and prior to this work all known resilient pairs of cones involved at least one classical cone or one Lorentzian cone. Finding non-resilient pairs of cones would be useful for tackling open problems in quantum information theory and we refer to~\cite{aubrun2023annihilating} for more information.

For any pair of proper cones $\CC_1$ and $\CC_2$ the cone of entanglement breaking maps $\EB(\CC_1,\CC_2)$ is the trace-dual of the cone of positive maps $\Pos(\CC_2,\CC_1)$, see \cite[Lemma 2.4]{aubrun2023annihilating}. Moreover, by~\cite[Corollary 5.5]{aubrun2023annihilating} we have  
\[
\Pos(\CC_2,\CC_1)^*=\EB (\CC_1,\CC_2)\subseteq \EA (\CC_1,\CC_2) \subseteq \LorEB(\CC_1,\CC_2) := \LorFact(\CC_2,\CC_1)^* ,
\]
Here, the cone $\LorEB(\CC_1,\CC_2)$, arising as the trace-dual of the Lorentz-factorizable maps, is called the cone of \emph{Lorentz-entanglement breaking maps}. It can be defined as all maps $P\in \Pos(\CC_1,\CC_2)$ satisfying $(\ident_V\otimes P)(\L\otimes_{\max} \CC_1)\subseteq \L\otimes_{\min} \CC_2$ for any Lorentzian cone $\L\subseteq V$. In the special case where $\CC_1=\CC$ is a symmetric cone and $\CC_2=\CC_{\square}$ we can apply Theorem~\ref{thm:symmetric-cones-LFP} to show
\[
\LorEB(\CC,\CC_\square)=\EB(\CC,\CC_\square),
\]
and, by the inclusions from above, $(\CC,\CC_\square)$ is resilient. By~\cite[Lemma~2.3]{aubrun2023annihilating}, a pair $(\CC_1,\CC_2)$ is resilient if and only if the pair $(\CC^*_2,\CC^*_1)$ is resilient. Since any symmetric cone and $\CC_\square$ are selfdual, we have shown the following proposition:  

\begin{prop}\label{prop:Resilience}
Both $(\CC_\square,\CC)$ and $(\CC, \CC_\square)$ are resilient for any symmetric cone $\CC$.
\end{prop}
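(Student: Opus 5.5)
The plan is to assemble the proposition from three ingredients already present in the text just above it: Theorem~\ref{thm:symmetric-cones-LFP}, the chain of inclusions
\[
\EB(\CC_1,\CC_2)\subseteq \EA(\CC_1,\CC_2)\subseteq \LorEB(\CC_1,\CC_2)=\LorFact(\CC_2,\CC_1)^*
\]
from \cite[Corollary 5.5]{aubrun2023annihilating}, and the duality $\EB(\CC_1,\CC_2)=\Pos(\CC_2,\CC_1)^*$ from \cite[Lemma 2.4]{aubrun2023annihilating}.

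First I treat the pair $(\CC,\CC_\square)$ with $\CC$ symmetric. By Theorem~\ref{thm:symmetric-cones-LFP}, the pair $(\CC_\square,\CC)$ has the LFP, which means $\LorFact(\CC_\square,\CC)=\Pos(\CC_\square,\CC)$. Taking trace-duals of both sides gives
\[
\LorEB(\CC,\CC_\square)=\LorFact(\CC_\square,\CC)^*=\Pos(\CC_\square,\CC)^*=\EB(\CC,\CC_\square).
\]
Inserting this into the chain of inclusions with $(\CC_1,\CC_2)=(\CC,\CC_\square)$ forces $\EB(\CC,\CC_\square)=\EA(\CC,\CC_\square)$, so $(\CC,\CC_\square)$ is resilient.

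For the other order I use \cite[Lemma~2.3]{aubrun2023annihilating}: a pair $(\CC_1,\CC_2)$ is resilient if and only if $(\CC_2^*,\CC_1^*)$ is. Applying it to $(\CC,\CC_\square)$ shows that $(\CC_\square^*,\CC^*)$ is resilient. Two self-duality facts then finish the argument. First, a symmetric cone is self-dual by definition. Second, $\CC_\square$ is self-dual because its dual is $\{(a,b,c):a\geq\max(|b|,|c|)\}$, and the linear change of variables $(b,c)\mapsto(b+c,b-c)$ carries this onto $\CC_\square$. Since isomorphisms of cones preserve both $\EA$ and $\EB$ through conjugation, resilience is invariant under isomorphism, and so $(\CC_\square,\CC)$ is resilient.

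I expect no real obstacle. The only points that need care are that self-duality is meant up to linear isomorphism rather than as an equality of sets, and that resilience transfers along such isomorphisms. Both follow directly from the definitions, because $\EA$ and $\EB$ are built from $\otimes_{\max}$, $\otimes_{\min}$ and positivity, all of which are natural under linear isomorphisms.
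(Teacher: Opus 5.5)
Your proposal is correct and follows the paper's argument exactly. Theorem~\ref{thm:symmetric-cones-LFP} plus trace-duality gives $\LorEB(\CC,\CC_\square)=\EB(\CC,\CC_\square)$, the inclusion chain then yields resilience of $(\CC,\CC_\square)$, and \cite[Lemma~2.3]{aubrun2023annihilating} together with self-duality of $\CC$ and $\CC_\square$ handles the reversed pair. One negligible slip: the map $(b,c)\mapsto(b+c,b-c)$ carries $\CC_\square^*$ onto $\{(a,u,v): |u|+|v|\leq 2a\}$, so to land exactly on $\CC_\square$ you need $(b,c)\mapsto\bigl(\tfrac{b+c}{2},\tfrac{b-c}{2}\bigr)$; this changes nothing in the argument.
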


To conclude this section, we will apply our results to a problem in quantum information theory. For this, we introduce the notation $\PSD_n(\C)^{\otimes_h k}$ to denote the positive semidefinite operators acting on $(\C^n)^{\otimes k}$, i.e., the tensor product of the cone of positive semidefinite matrices induced by the Hilbert space tensor product. Then, we have the following theorem:

\begin{thm}
If $P\in \Pos(\PSD_3(\C), \CC_\square)$ satisfies $P^{\otimes k}\in \Pos(\PSD_3(\C)^{\otimes_h k}, \CC^{\otimes_{\min}k}_\square)$ for all $k\in\N$, then $P\in \EB(\PSD_3(\C),\CC_\square)$.
\end{thm}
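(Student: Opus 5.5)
The plan is to reduce to the duality chain already recorded in this section:
\[
\EB(\CC_1,\CC_2)\subseteq \EA(\CC_1,\CC_2)\subseteq \LorEB(\CC_1,\CC_2)=\LorFact(\CC_2,\CC_1)^*,
\]
and to combine it with Theorem~\ref{thm:symmetric-cones-LFP} for the pair $(\CC_\square,\PSD_3(\C))$. The hypothesis here uses the Hilbert-space tensor product $\PSD_3(\C)^{\otimes_h k}$ on the domain side, rather than the maximal tensor product. Since $\PSD_3(\C)^{\otimes_h k}\subseteq \PSD_3(\C)^{\otimes_{\max}k}$, the assumption is weaker than $P$ being entanglement-annihilating, so we cannot simply quote $P\in\EA$. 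Instead I would rerun the argument behind \cite[Corollary 5.5]{aubrun2023annihilating} and check that it only needs $k=2$ together with Lorentzian test cones sitting inside $\PSD_3(\C)^{\otimes_h}$.

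\textbf{Step 1.} Show $P\in\LorEB(\PSD_3(\C),\CC_\square)$, that is,
\[
\langle P, T\rangle\geq 0 \quad\text{for every } T=\beta\alpha\in \LorFact(\CC_\square,\PSD_3(\C)),
\]
with $\alpha\in\Pos(\CC_\square,\L)$ and $\beta\in\Pos(\L,\PSD_3(\C))$. Using Lemma~\ref{lemma:lorfact:inj-surj} I may take $\beta$ injective. Moreover, by Theorem~\ref{thm:symmetric-cones-LFP} and its proof, it suffices to test the extremal $T$. These have rank $1$, or rank $3$ with $T(\CC_\square)$ contained in a face $\face(A_1+A_4)\simeq \PSD_2(\C)\simeq\L_4$. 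So the relevant Lorentz cones are concretely the cones $\PSD(W)$ for $2$-dimensional subspaces $W\subseteq\C^3$. The rank-$1$ $T$ pair nonnegatively with any positive $P$, so only the $\PSD_2(\C)$ faces matter.

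\textbf{Step 2.} Express the pairing $\langle P,T\rangle$ as an evaluation of $P^{\otimes 2}$ (or of $\ident\otimes P$) on a specific tensor. The argument in \cite{aubrun2023annihilating} exploits that, for a Lorentz cone $\L$, the identity/canonical tensor $\chi_\L\in \L\otimes_{\max}\L$ is mapped through positive maps, and that $\L\otimes_{\max}\L$-type elements can be produced as $(\ident\otimes\beta)$ images landing in the domain of $P^{\otimes 2}$. In our setting $\L=\PSD_2(\C)$, so I would use the maximally entangled state on $\C^2\otimes\C^2$ together with its partial transpose. The key point is that the relevant test tensors in $\PSD_2(\C)\otimes \PSD_2(\C)$ are either genuinely positive semidefinite on $\C^2\otimes\C^2$ or differ from such by a partial transpose. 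The transpose is an automorphism of $\PSD_2(\C)$ that is implemented on $\L_4$ by a reflection, and I would absorb it into $\alpha$. I would therefore show that the element $(\ident\otimes\beta)(\chi)$, used to pair against $P$, can be chosen inside $\PSD_3(\C)^{\otimes_h 2}$. Then the hypothesis with $k=2$, followed by applying $\alpha^*\otimes$ (evaluation against $\CC_\square^*=\CC_\square$ in the min tensor), yields $\langle P,T\rangle\geq 0$.

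\textbf{Step 3.} Conclude with
\[
P\in\LorFact(\CC_\square,\PSD_3(\C))^*=\Pos(\CC_\square,\PSD_3(\C))^*=\EB(\PSD_3(\C),\CC_\square),
\]
using Theorem~\ref{thm:symmetric-cones-LFP} and \cite[Lemma 2.4]{aubrun2023annihilating}.

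The main obstacle is Step 2. I need to verify that the witness tensor for the $\L_4$ factorization lies in the Hilbert-space tensor cone, which is strictly smaller than the max tensor cone. This is exactly where the special structure $\L_4\simeq\PSD_2(\C)$, with $\L_4\otimes_{\max}\L_4$ identified via partial transpose, must be used. I would first try the explicit computation with the flip/maximally entangled operator on $\C^2\otimes\C^2\subseteq\C^3\otimes\C^3$.
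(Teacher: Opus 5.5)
\textbf{There is a genuine gap in Step 2.} Steps 1 and 3 are fine.

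Step 1 can even be sharpened. Every rank-$3$ extremal $T\in\Pos(\CC_\square,\PSD_3(\C))$ takes values in $\face(A_1+A_4)$, the matrices supported on a $2$-dimensional subspace $W\subseteq\C^3$. So $T=\beta\alpha$, where $\beta(Y)=VYV^*$ for an isometry $V:\C^2\to\C^3$ onto $W$, and $\alpha=V^*T(\cdot)V$ is positive. Since $\beta$ is completely positive, no transpose needs absorbing. Then $\langle P,T\rangle=\mathrm{tr}(P\beta\alpha)=\mathrm{tr}(S)$ with $S\coloneqq\alpha P\beta:M_2(\C)\to M_2(\C)$, and $S^{\otimes k}$ maps $\PSD_2(\C)^{\otimes_h k}$ into $\PSD_2(\C)^{\otimes_{\min}k}$. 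This avoids both \cite[Corollary 4.4]{LPMH25} and Woronowicz's decomposition, which the paper uses.

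Step 2 is not just an unfinished computation: the mechanism you describe cannot work. You cannot write $\langle P,T\rangle$ as an evaluation of $P^{\otimes 2}$ on a fixed tensor followed by a fixed functional. Such an expression is unchanged under $P\mapsto -P$, while $\langle P,T\rangle$ changes sign.

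Concretely, take the unnormalized maximally entangled projector on $\C^2\otimes\C^2$. It is the partial transpose of the flip operator, and the flip is the canonical tensor of $\PSD_2(\C)\simeq\L_4$ for the trace inner product. Feed this projector through $(\alpha\otimes\alpha)P^{\otimes 2}(\beta\otimes\beta)$ and test against the flip, or against the projector itself. You get $\mathrm{tr}(S\vartheta S^*)$, respectively $\mathrm{tr}(\vartheta S\vartheta S^*)$, where $\vartheta$ is the transpose and $S^*$ the trace-adjoint. You never get $\mathrm{tr}(S)$. Evaluating $\ident\otimes P$ instead is not covered by the hypothesis; controlling it is essentially the conclusion $P\in\LorEB(\PSD_3(\C),\CC_\square)$ itself.

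The missing ingredient is a genuinely nonlinear, qubit-specific fact that combines positivity of $S$ with the two-copy condition: a positive map $S:M_2(\C)\to M_2(\C)$ such that $S\otimes S$ sends every positive semidefinite operator on $\C^2\otimes\C^2$ to a separable one is entanglement breaking. The paper imports exactly this from the proof of \cite[Lemma 3.2]{christandl2019composed}, applied to $WPQ$ for arbitrary $W\in\Pos(\CC_\square,\PSD_2(\C))$. Applied to your $S$, it gives $S\in\EB(\PSD_2(\C),\PSD_2(\C))$, hence $\mathrm{tr}(S)\geq 0$. With that in place, your Steps 1 and 3 finish the proof.
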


\begin{proof}
Consider a linear map $P:M_3(\C)\ra \R^3$ such that $P^{\otimes k}\lb \PSD_3(\C)^{\otimes_h k}\rb\subseteq \CC^{\otimes_{\min} k}_\square$ for all $k\in\N$. As $\LorEB(\PSD_3(\C),\CC_\square)=\EB(\PSD_3(\C),\CC_\square)$ by the previous discussion, it suffices to show that $P\in \LorEB(\PSD_3(\C),\CC_\square)$. By \cite[Corollary~4.4]{LPMH25} and since $\PSD_2(\C)\simeq \L_4$ this would follow if $PQ\in \EB\lb \PSD_2(\C),\CC_{\square}\rb$ for any $Q\in \Pos(\PSD_2(\C),\PSD_3(\C))$. By~\cite[Theorem 1.2]{woronowicz1976positive} the map $Q$ can be written as a sum of a completely positive map and a composition of a completely positive map with a transpose. Therefore, it is enough to show that $PQ\in \text{EB}\lb \PSD_2(\C),\CC_{\square}\rb$ for any completely positive map $Q:M_2(\C)\ra M_3(\C)$. Consider any positive map $W\in \Pos(\CC_{\square},\PSD_2(\C))$ and note that 
\[
(WPQ)^{\otimes k}\lb \PSD_{2}(\C)^{\otimes_h k}\rb \subseteq  W^{\otimes k}\lb P^{\otimes k}\lb \PSD_{3}(\C)^{\otimes_h k}\rb\rb\subseteq \PSD_2(\C)^{\otimes_{\min}k},
\]
where we used that for any $k\in\N$ we have $Q^{\otimes k}(\PSD_{2}(\C)^{\otimes_h k})\subseteq \PSD_{3}(\C)^{\otimes_h k}$ and $W^{\otimes k}(\CC^{\otimes_{\min} k}_{\square})\subseteq \PSD_2(\C)^{\otimes_{\min}k}$. As in the proof of~\cite[Lemma 3.2]{christandl2019composed}, we conclude that $WPQ\in \EB(\PSD_2(\C),\PSD_2(\C))$. Since this holds for all positive maps $W\in \Pos(\CC_{\square},\PSD_2(\C))$, we conclude (see~\cite[Lemma 2.4]{aubrun2023annihilating}) that $PQ\in \EB\lb \PSD_2(\C),\CC_{\square}\rb$ finishing the proof.
\end{proof}

The previous theorem has a physical interpretation following the general ideas of~\cite{plavala2025all}: A pair of $2$-outcome positive operator-valued measures (POVMs) acting on a $3$-level quantum system is jointly measurable if and only if it is impossible to obtain any non-classical correlation by using the pair of POVMs locally for each party in a multipartite Bell scenario.

\section*{Acknowledgments}

We thank Josse van Dobben de Bruyn, Anna Jen{\v{c}}ov{\'a}, and Lauritz van Luijk for interesting discussions. GA was supported in part by ANR (France) under the grant ESQuisses (ANR-20-CE47-0014-01). FLP and AMH acknowledge funding from The Research Council of Norway (project 324944). FLP also gratefully acknowledges financial support by the "Transformationsprogramm Forschung und Wissenstransfer Saar" through the Center for Quantum Technologies (QuTe). This work was supported by the Swedish Research Council under grant no. 2021-06594 while the authors were in residence at Institut Mittag-Leffler in Djursholm, Sweden during the 2026 program
on Operator Algebras and Quantum Information. 

\section*{Disclosure about the use of generative AI}

The authors used a large language model to identify inconsistencies and errors in preliminary drafts. All content, including written text and proofs, has been created and checked by the authors.

\bibliography{references}{}
\bibliographystyle{alpha}

\end{document}